\let\oldsqrt\sqrt
\def\sqrt{\mathpalette\DHLhksqrt}
\def\DHLhksqrt#1#2{%
\setbox0=\hbox{$#1\oldsqrt{#2\,}$}\dimen0=\ht0
\advance\dimen0-0.2\ht0
\setbox2=\hbox{\vrule height\ht0 depth -\dimen0}%
{\box0\lower0.4pt\box2}}
\newcommand{\R}{\mathbb{R}} 
\newcommand{\N}{\mathbb{N}} 
\newcommand{\dist}{\textnormal{dist}} 
\newcommand{\diam}{\textnormal{diam}} 
\newcommand{\supp}{\textnormal{supp}} 
\newcommand{\ov}{\overline}
\renewcommand{\phi}{\varphi}
\newcommand{\cK}{{\mathcal K}}
\newcommand{\cX}{{\mathcal X}}
\theoremstyle{definition}
\newtheorem{defi}{Definition}[section]
\newtheorem{remark}[defi]{Remark}
\theoremstyle{plain} 
\newtheorem{thm}[defi]{Theorem}
\newtheorem{prop}[defi]{Proposition}
\newtheorem{lemma}[defi]{Lemma}
\theoremstyle{definition}
\numberwithin{equation}{section}
 \title[Mixed local and nonlocal supercritical Dirichlet problems]{Mixed local and nonlocal supercritical Dirichlet problems} 
\author[David Amundsen, Abbas Moameni and Remi Yvant Temgoua]{David Amundsen$^1$, Abbas Moameni$^1$ and Remi Yvant Temgoua$^1$}
\address{$^1$ School of Mathematics and Statistics, Carleton University, Ottawa, Ontario, Canada.}
\email{dave@math.carleton.ca} 
\email{momeni@math.carleton.ca}
\email{remiyvanttemgoua@cunet.carleton.ca}
\date{\today}
\begin{document}

\begin{abstract}
	In this work, we consider a mixed local and nonlocal Dirichlet problem with supercritical nonlinearity. We first  establish a multiplicity result for the problem
	\begin{equation}\label{e1}
	Lu=|u|^{p-2}u+\mu |u|^{q-2}u~~~\text{in}~~\Omega,\quad\quad u=0~~~\text{in}~~\R^N\setminus\Omega,
	\end{equation}
where $L:=-\Delta +(-\Delta)^s$ for  $s\in(0,1)$ and   $\Omega\subset\R^N$ is a bounded domain.  Precisely, we show that
problem \eqref{e1}  for $1<q<2<p$ has a positive solution as well as  a sequence of sign-changing solutions with a negative energy for small values of $\mu$.  Here $u$ can be either a scalar function,  or a vector valued function  so that (\ref{e1}) turns into a system with supercritical nonlinearity.
Moreover, whenever the domain is symmetric, we also prove the existence of symmetric solutions enjoying the same symmetry properties.   We shall also prove an existence result for the supercritical Hamiltonian system 
\[Lu=|v|^{p-2}v, \qquad 
        Lv=|u|^{d-2}u+\mu |u|^{q-2}u\]
with the Dirichlet boundary condition on $\Omega$ where  $1<q<2<p, d$.
Our method is variational, and in  both problems the lack of compactness for the supercritical problem  is recovered by working on a closed convex subset of an appropriate  function space.

\end{abstract}

\maketitle

{\footnotesize
	\begin{center}
	
		\textit{Keywords.}  Variational principle, Symmetric solutions, Euler-Lagrange functional, Convex Analysis.\\[0.2cm]
		
		
	\end{center}
}

\section{Introduction and main results}\label{section:introduction}

Let $s\in(0,1)$, $1<q<2<p,d$ and $\Omega\subset\R^N, N>2$ be a bounded domain with $C^1$ boundary. The purpose of the present paper is to obtain the existence and qualitative properties of solutions to 
the equation
\begin{equation}\label{e2}
\left\{\begin{aligned}
Lu&=|u|^{p-2}u+\mu |u|^{q-2}u~~~\text{in}~~\Omega\\
u&=0\quad\quad\quad\quad\quad\quad\quad\quad~~\text{in}~~\R^N\setminus\Omega,
\end{aligned}
\right.
\end{equation}
and the Hamiltonian system
\begin{equation}\label{hamiltonian system0}
    \left\{ 
    \begin{aligned}
        Lu&=|v|^{p-2}v~~~~~~~~~~~~~~~~~~~\text{in}~~~\Omega\\
        Lv&=|u|^{d-2}u+\mu |u|^{q-2}u~~~~\text{in}~~~\Omega\\
        u&=v=0~~~~~~~~~~~~~~~~~~~~~~\text{in}~~~\R^N\setminus\Omega
    \end{aligned}
    \right.
\end{equation}
where $L:=-\Delta +(-\Delta)^s$ is the so-called mixed local and nonlocal operator and $\mu>0$ is a positive parameter. Recall that $-\Delta$ denotes the classical Laplace operator and $(-\Delta)^s$ the standard fractional Laplacian defined for every sufficiently regular function $u: \R^N\rightarrow\R$ by 
\begin{equation*}
(-\Delta)^su(x)=c_{N,s}P.V.\int_{\R^N}\frac{u(x)-u(y)}{|x-y|^{N+2s}}\ dy,~~~x\in\R^N,
\end{equation*}
where $c_{N,s}$ is a normalization constant and ``$P.V$'' stands for the Cauchy principal value.

Operators of the form $L=-\Delta+(-\Delta)^s$ naturally arise in the study of superposition of two stochastic processes: a classical stochastic process (Brownian motion) whose infinitesimal generator is $-\Delta$, and a stochastic process with long-jumps ($2s$-stable L\'{e}vy process) whose infinitesimal generator is $(-\Delta)^s$. We refer to \cite{dipierro2021description} for a complete exposition on the superposition of Brownian and L\'{e}vy processes. 

Very recently, the study of mixed local and nonlocal operator of the form $-\Delta+(-\Delta)^s$ has attracted much attention both from probabilistic and analytic points of view. Among others, this operator is a good candidate to describe many phenomena in nature since it takes into account local and nonlocal behavior of the system. A non-exhaustive list of references in which mixed local and nonlocal operators $L=-\Delta+(-\Delta)^s$ have been considered is \cite{abatangelo2021elliptic,biagi2022brezis,anthal2022choquard,biagi2022mixed,biagi2022hong,biagi2021faber,biagi2021semilinear,biagi2021brezis,biswas2021mixed,biswas2022boundary,byun2023regularity,byun2023mixed,de2022gradient,barles2012lipschitz,dipierro2022linear,Li, maione2022variational,salort2022mixed,fang2022regularity,garain2022regularity,garain2023higher,garain2022mixed}.

The convex-concave problem \eqref{e2} in the case when $L=-\Delta$ or $L=(-\Delta)^s$ has been widely studied in the literature. 
	Note that this  problem has  concave-convex non-linearties. In this direction with $L=-\Delta$, in a very first work done by Ambrosetti, Brezis and Cerami in \cite{ABC}, authors have proved the existence of a  positive solutions and infinitely many sign changing  with the non-linearity $u^p+\lambda u^q$ satisfying $0 < q < 1 < p.$  The result in \cite{ABC} has received a lot of attention because there was no control on $p$ from the above. This type of problem presents many difficulties since supercritical nonlinearity is involved: a standard argument in the calculus of variations cannot be applied to derive solutions since compact embedding fails to hold in this case. However, a  variational method was recently  developed  (see \cite{moameni2017variational,moameni2018variational}) that addresses the difficulty. It consists of restricting the Euler-Lagrange functional of the problem to an appropriate convex set. This argument has been successfully applied to derive a multiplicity result for problems of the type \eqref{e2} for both $L=-\Delta$ and $L=(-\Delta)^s$. We refer to \cite{kouhestani2019multiplicity,kouhestani2018multiplicity} and the references therein. 

The aim of this paper is to investigate mixed local and nonlocal operator of the form $L=-\Delta+(-\Delta)^s$ where we have a convex-concave nonlinearity. Our result is new for supercritical nonlinearity. Let us point out that nonlinear problem of the form \eqref{e2} with subcritical nonlinearity has been recently considered in \cite{maione2022variational}. Even though we are stating our results for the case where $u:\Omega \to \R$ is a scalar function,  with some minor changes one can consider a system of equations where $u:\Omega \to \R^m$ is a vector function.   \\

Our first main result reads as follows.

\begin{thm}\label{first-main-result}
	Let $\Omega\subset\R^N$ be a bounded domain with $C^1$ boundary. If $N>2$ and $1<q<2<p$, then there exists $\mu_\#>0$ such that for every $\mu\in(0,\mu_\#)$ problem \eqref{e2} admits at least one positive solution in $\cX^1_0\cap L^{\infty}(\R^N)$ with negative energy.
\end{thm}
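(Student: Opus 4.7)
The plan is to combine the convex-set restriction strategy of \cite{moameni2017variational,moameni2018variational,kouhestani2019multiplicity} with an a~priori $L^\infty$ estimate tailored to the mixed operator $L$. The formal energy associated with \eqref{e2} is
\[
I(u)=\tfrac12\|u\|^2-\tfrac1p\int_\Omega|u|^p\,dx-\tfrac{\mu}{q}\int_\Omega|u|^q\,dx,
\]
where $\|u\|^2=\int_\Omega|\nabla u|^2\,dx+\tfrac{c_{N,s}}{2}\iint_{\R^{2N}}\tfrac{|u(x)-u(y)|^2}{|x-y|^{N+2s}}\,dx\,dy$ is the natural norm on $\cX^1_0$. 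Because $p$ may be supercritical, $I$ is not defined on all of $\cX^1_0$, so I would restrict it to the convex set $K_M:=\{u\in\cX^1_0:\|u\|_{L^\infty(\R^N)}\le M\}$ for a large parameter $M>0$. On $K_M$ the functional $I$ is well-defined, coercive, and weakly lower semicontinuous, since $K_M$ is closed and convex and the polynomial terms behave continuously along weak convergence thanks to the $L^\infty$ bound combined with the compact subcritical embedding.

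Next, I would produce a minimizer of $I$ on $K_M$. Choosing a nonzero nonnegative $\varphi\in C_c^\infty(\Omega)\cap K_M$ and evaluating $I(t\varphi)$ for small $t>0$, the concave term $-\tfrac{\mu t^q}{q}\|\varphi\|_q^q$ dominates both the quadratic and the supercritical terms because $q<2<p$, so $\inf_{K_M}I<0$. The direct method then yields a minimizer $\bar u\in K_M$, and replacing $\bar u$ by $|\bar u|$---permissible since $\|\,|\bar u|\,\|\le\|\bar u\|$ for the mixed norm and $|\bar u|\in K_M$---produces $\bar u\ge 0$, $\bar u\not\equiv 0$, with $I(\bar u)<0$.

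Applying the variational principle of \cite{moameni2017variational,moameni2018variational} to $I=\Psi-\Phi$ with $\Psi:=\tfrac12\|\cdot\|^2+\chi_{K_M}$ and $\Phi:=\tfrac1p\|\cdot\|_p^p+\tfrac{\mu}{q}\|\cdot\|_q^q$, the minimizer $\bar u$ is characterized by the variational inequality
\[
\langle\bar u,v-\bar u\rangle_{\cX^1_0}\ge\int_\Omega\bigl(\bar u^{p-1}+\mu\bar u^{q-1}\bigr)(v-\bar u)\,dx\quad\text{for all }v\in K_M,
\]
and this becomes the Euler--Lagrange equation for \eqref{e2} as soon as $\|\bar u\|_{L^\infty}<M$. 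The argument thus reduces to establishing a uniform-in-$M$ a~priori bound $\|\bar u\|_{L^\infty}\le C(\mu)$ with $C(\mu)\to 0$ as $\mu\to 0$: from $I(\bar u)\le 0$ one first extracts control of $\|\bar u\|_{\cX^1_0}$ in terms of $\mu$, and a Moser/De~Giorgi iteration adapted to $L=-\Delta+(-\Delta)^s$ (in the spirit of \cite{biagi2021semilinear,garain2022regularity}) upgrades this to the required $L^\infty$ estimate. Choosing $\mu_\#$ so that $C(\mu)<M$ for $\mu\in(0,\mu_\#)$ removes the constraint, and the strong maximum principle for $L$, together with $\bar u\ge 0$ and $\bar u\not\equiv 0$, yields positivity on $\Omega$.

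The hardest part will be this uniform $L^\infty$ bound: the supercritical exponent $p$ rules out a direct Sobolev iteration in the standard range, and the nonlocal piece $(-\Delta)^s u$ does not interact with the Moser truncation $(\bar u-k)_+$ as cleanly as the Laplacian does. The key technical point will therefore be to show that the gradient term in $L$ provides enough coercivity to absorb the nonlocal contribution during the iteration, producing a constant independent of the truncation level $M$ that is small when $\mu$ is small.
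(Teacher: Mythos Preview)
Your overall strategy---minimize the restricted functional on an $L^\infty$-ball and then argue that the constraint is inactive---matches the paper in its first half but diverges at the crucial step, and the divergence contains a real gap.

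The problem is your claimed ``uniform-in-$M$'' bound $\|\bar u\|_{L^\infty}\le C(\mu)$. You propose to obtain it by first extracting a $\cX^1_0$ estimate from $I(\bar u)\le 0$ and then bootstrapping via Moser/De~Giorgi iteration. Neither step works as stated. The energy inequality reads $\tfrac12\|\bar u\|^2\le\tfrac1p\|\bar u\|_p^p+\tfrac{\mu}{q}\|\bar u\|_q^q$, and since $p$ is supercritical the term $\|\bar u\|_p^p$ cannot be controlled by $\|\bar u\|$; the only available bound is $\|\bar u\|_p^p\le M^{p-2}\|\bar u\|_2^2$, which reintroduces $M$. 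For the iteration, $\bar u$ is not yet known to satisfy the PDE---only the variational inequality on $K_M$---so you cannot simply test with $\bar u^\beta$; at best you extract a one-sided subsolution inequality $L\bar u\le\bar u^{p-1}+\mu\bar u^{q-1}$, whose right-hand side is again bounded only in terms of $M$. There is thus no mechanism to produce an $M$-independent $L^\infty$ bound, and the circularity you flag in your last paragraph is not resolved.

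The paper avoids this entirely via the \emph{pointwise invariance condition} of \cite{moameni2018variational}. Given the constrained minimizer $\tilde u\in K(r)$, one solves the \emph{linear} problem $Lv=|\tilde u|^{p-2}\tilde u+\mu|\tilde u|^{q-2}\tilde u$ and applies the linear $L^\infty$ estimate (Lemma~\ref{l-infty-estimate}) to get $\|v\|_{L^\infty}\le C(r^{p-1}+\mu r^{q-1})$. The requirement $v\in K(r)$ then becomes the algebraic inequality $C(r^{p-2}+\mu r^{q-2})\le 1$, which holds for $r$ in a bounded interval $[r_1,r_2]$ provided $\mu<\mu_\#$---so $r$ must \emph{not} be large, contrary to your setup. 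Once $v\in K(r)$, an abstract convexity argument (Theorem~\ref{t1}) forces $\tilde u=v$, hence $\tilde u$ solves \eqref{e2}. No nonlinear bootstrap on the minimizer is ever needed; the $L^\infty$ control is applied only to the auxiliary $v$, which solves a linear equation with explicitly bounded data.
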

To be precise,  the solution $u$ obtained  in the above theorem and throughout the paper   for problem \eqref{e2} holds 
		in the weak sense i.e.,
		\begin{equation*}
		\int_{\Omega}\nabla u\cdot\nabla\phi\ dx+\int_{\R^N}\int_{\R^N}\frac{(u(x)-u(y))(\phi(x)-\phi(y))}{|x-y|^{N+2s}}\ dxdy=\int_{\Omega}(|u|^{p-2}u+\mu |u|^{q-2}u)\phi\ dx,
		\end{equation*}
for all $\phi\in \cX^1_0\cap L^p(\Omega).$	
As a byproduct of Theorem \ref{first-main-result}, one obtains a nonexistence result for large $\mu$, as given by the following result. Here, since problem \eqref{e2} is $\mu$-dependent, we use $\eqref{e2}_{\mu}$ in place of \eqref{e2} for clarity. 

\begin{thm}\label{non-existence-result}
	Let $\Omega\subset\R^N$ be a bounded domain with $C^1$ boundary. If $N>2$ and $1<q<2<p$, then there exists $\varLambda>0$ such that 
	\begin{itemize}
		\item [$(i)$] for all $\lambda\in(0,\varLambda)$ problem $\eqref{e2}_{\lambda}$ admits a positive solution in $\cX^1_0\cap L^{\infty}(\R^N)$.
		\item [$(ii)$] for all $\lambda>\varLambda$ problem $\eqref{e2}_{\lambda}$ admits no positive solution.
	\end{itemize}
\end{thm}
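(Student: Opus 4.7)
The plan is to set $\varLambda := \sup\{\lambda > 0 : \eqref{e2}_\lambda \text{ admits a positive solution in } \cX^1_0 \cap L^\infty(\R^N)\}$. Theorem \ref{first-main-result} guarantees $\varLambda \geq \mu_\# > 0$, so $\varLambda \in (0,+\infty]$, and the proof splits into two complementary claims: (A) $\varLambda < +\infty$, which is exactly the non-existence in item (ii); and (B) existence of a positive solution for every $\lambda \in (0,\varLambda)$, which is item (i).

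For claim (A), I would test against the first positive Dirichlet eigenfunction $\varphi_1 > 0$ of $L$ associated with the principal eigenvalue $\lambda_1 > 0$ (whose existence and strict positivity for the mixed operator follow from the references cited in the introduction). Integrating the equation $Lu = u^{p-1} + \lambda u^{q-1}$ against $\varphi_1$ and using the symmetry of the bilinear form of $L$,
\begin{equation*}
\lambda_1 \int_\Omega u \varphi_1 \, dx \;=\; \int_\Omega \bigl( u^{p-1} + \lambda u^{q-1} \bigr) \varphi_1 \, dx.
\end{equation*}
A straightforward minimization of $t \mapsto t^{p-2} + \lambda t^{q-2}$ on $(0,\infty)$ in the range $1 < q < 2 < p$ yields the pointwise bound $u^{p-1} + \lambda u^{q-1} \geq C_{p,q}\, \lambda^{(p-2)/(p-q)}\, u$, with $C_{p,q} > 0$ depending only on $p,q$. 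Substituting this estimate and dividing by $\int_\Omega u \varphi_1 > 0$, I obtain $\lambda_1 \geq C_{p,q}\, \lambda^{(p-2)/(p-q)}$, so $\lambda \leq (\lambda_1/C_{p,q})^{(p-q)/(p-2)}$, forcing $\varLambda < +\infty$.

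For claim (B), I would use a sub/super-solution scheme. Fix $\lambda \in (0,\varLambda)$ and choose $\mu \in (\lambda, \varLambda)$ for which $\eqref{e2}_\mu$ admits a positive solution $u_\mu \in \cX^1_0 \cap L^\infty(\R^N)$, available by definition of the supremum. Since $u_\mu > 0$, the inequality $u_\mu^{p-1} + \lambda u_\mu^{q-1} \leq u_\mu^{p-1} + \mu u_\mu^{q-1}$ makes $u_\mu$ a super-solution of $\eqref{e2}_\lambda$. As a companion sub-solution I would take $\underline{u} = \varepsilon \varphi_1$ with $\varepsilon > 0$ small: dividing the required inequality $L(\varepsilon \varphi_1) \leq (\varepsilon \varphi_1)^{p-1} + \lambda (\varepsilon \varphi_1)^{q-1}$ by $\varepsilon \varphi_1 > 0$ reduces it to $\lambda_1 \leq (\varepsilon \varphi_1)^{p-2} + \lambda (\varepsilon \varphi_1)^{q-2}$, which holds on $\Omega$ for all sufficiently small $\varepsilon$ because $q - 2 < 0$ makes the right-hand side blow up as $\varepsilon \to 0^+$. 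Shrinking $\varepsilon$ further so that $\varepsilon \varphi_1 \leq u_\mu$ and noting that $t \mapsto t^{p-1} + \lambda t^{q-1}$ is increasing on $[0,\|u_\mu\|_\infty]$ (since $p>2$ and $q>1$), a monotone iteration for $L$ produces a positive solution of $\eqref{e2}_\lambda$ sandwiched between $\varepsilon \varphi_1$ and $u_\mu$, which therefore lies in $\cX^1_0 \cap L^\infty(\R^N)$.

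The main obstacle I anticipate is executing the monotone iteration rigorously in the mixed local/nonlocal framework: it requires a weak comparison principle for $L$ on $\cX^1_0$, the existence, positivity and Hopf-type behavior of $\varphi_1$, and enough regularity for the iterates to converge while preserving the pointwise sandwich $\varepsilon \varphi_1 \leq u \leq u_\mu$. These ingredients are available from the works on $-\Delta + (-\Delta)^s$ cited in the introduction, so the argument should go through in close analogy with the purely local or purely fractional analogues; by contrast, the eigenfunction-test argument in (A) is elementary once the first eigenpair is in hand.
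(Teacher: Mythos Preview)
Your proposal follows the same overall architecture as the paper: define $\varLambda$ as the supremum of parameters admitting a positive solution, bound it above by testing against the first Dirichlet eigenfunction of $L$, and fill in $(0,\varLambda)$ by a sub/supersolution argument with $u_\mu$ (for some $\mu\in(\lambda,\varLambda)$) as supersolution. Your eigenfunction test in (A) is simply a more explicit version of the paper's, which phrases the same pointwise inequality as ``choose $\tilde\lambda$ with $\tau^{p-1}+\tilde\lambda\tau^{q-1}>\lambda_1\tau$ for all $\tau>0$''.

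The one substantive difference is in (B), namely the choice of subsolution and the device for placing it below $u_\mu$. The paper takes as subsolution the unique positive solution $w$ of the pure sublinear problem $Lw=\lambda w^{q-1}$ and obtains $w\le u_\mu$ from a Brezis--Kamin--Oswald comparison (Lemma~\ref{b-k-o-result}), proved in turn via a mixed local--nonlocal Picone inequality (Proposition~\ref{picone-inequality}); this machinery occupies most of the section. Your choice $\underline u=\varepsilon\varphi_1$ is lighter, but the step ``shrink $\varepsilon$ so that $\varepsilon\varphi_1\le u_\mu$'' is not free: it needs a Hopf-type lower bound on $u_\mu$ near $\partial\Omega$ so that $\varphi_1/u_\mu$ is bounded on $\Omega$ (the Hopf information is needed for $u_\mu$, not just for $\varphi_1$). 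Both routes are valid for $L=-\Delta+(-\Delta)^s$; the paper trades the boundary analysis for the Picone/BKO comparison, while your approach trades that machinery for a Hopf lemma imported from the references.
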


Our next result is concerned with the multiplicity of solutions. It reads as follows.

\begin{thm}\label{second-main-result}
	Let $\Omega\subset\R^N$ be a bounded domain with $C^1$ boundary. If $N>2$ and $1<q<2<p$, then there exists $\mu_\#>0$ such that for every $\mu\in(0,\mu_\#)$ problem \eqref{e2} admits infinitely many distinct solutions in $\cX^1_0\cap L^{\infty}(\R^N)$ with negative energy. 
\end{thm}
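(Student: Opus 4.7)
The plan is to adapt the Krasnoselskii genus argument of Ambrosetti--Brezis--Cerami \cite{ABC} to the restricted variational framework already used for Theorem \ref{first-main-result}. The natural Euler--Lagrange functional
\begin{equation*}
I_\mu(u) = \frac{1}{2}\int_\Omega |\nabla u|^2\,dx + \frac{c_{N,s}}{4}\iint_{\R^{2N}}\frac{|u(x)-u(y)|^2}{|x-y|^{N+2s}}\,dx\,dy - \frac{1}{p}\int_\Omega |u|^p\,dx - \frac{\mu}{q}\int_\Omega |u|^q\,dx
\end{equation*}
is not well-defined on $\cX^1_0$ when $p$ is supercritical, so one works on the convex set
\begin{equation*}
K_R = \{u \in \cX^1_0 \cap L^\infty(\R^N) : \|u\|_{L^\infty(\R^N)} \le R\},
\end{equation*}
where, as in the proof of Theorem \ref{first-main-result}, $I_\mu$ is $C^1$, even, coercive in the $\cX^1_0$-norm, and bounded below for $\mu$ small.

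For the multiplicity I would use the Krasnoselskii genus $\gamma$. For each $k \in \N$ set
\begin{equation*}
\Sigma_k = \{A \subset K_R : A \text{ closed, symmetric}, \, 0 \notin A, \, \gamma(A) \ge k\}, \qquad c_k = \inf_{A \in \Sigma_k} \sup_{u \in A} I_\mu(u).
\end{equation*}
To show $c_k < 0$, fix a $k$-dimensional subspace $V_k \subset C^\infty_c(\Omega)$; on a small sphere $S_\rho \subset V_k$ all norms are equivalent and the concave term $-\frac{\mu}{q}\int|u|^q$ dominates since $q < 2$, so $\sup_{S_\rho} I_\mu < 0$. For $\rho$ small enough one has $S_\rho \subset K_R$ and $\gamma(S_\rho) = k$, hence $c_k < 0$. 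Applying the deformation lemma in the restricted setting supplied by the variational principle of \cite{moameni2017variational,moameni2018variational} already invoked for Theorem \ref{first-main-result}, each $c_k$ is a critical value of $I_\mu|_{K_R}$. The sequence $(c_k)$ is non-decreasing and uniformly negative; so either the $c_k$ are pairwise distinct, or they cluster at a common level whose constrained critical set has infinite genus and therefore infinitely many elements. In both cases one obtains infinitely many distinct constrained critical points.

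The main obstacle is to promote each such $u_k$ to an actual solution of \eqref{e2}, i.e.\ to ensure the strict inequality $\|u_k\|_{L^\infty} < R$ so that the constrained critical point is in fact a free critical point. This rests on an a priori $L^\infty$ estimate for weak solutions of the truncated equation in terms of their $\cX^1_0$-norm, obtained by a Moser iteration adapted to the mixed operator $L = -\Delta + (-\Delta)^s$: the local gradient term is controlled via the Sobolev embedding $\cX^1_0 \hookrightarrow L^{2^*}(\Omega)$, while the $(-\Delta)^s$-bilinear form contributes a nonnegative quantity when tested against the usual convex truncations and so does not obstruct the iteration. Coercivity together with the uniform lower bound $c_k \ge \inf_{K_R} I_\mu > -\infty$ keeps $\|u_k\|_{\cX^1_0}$ bounded independently of $k$, so one can fix $R$ large enough in advance and then choose $\mu_\# > 0$ small enough that every $u_k$ lies strictly in the interior of $K_R$ for all $\mu \in (0,\mu_\#)$. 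This yields infinitely many solutions of \eqref{e2} in $\cX^1_0 \cap L^\infty(\R^N)$ with negative energy, completing the proof.
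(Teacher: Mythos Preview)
Your genus argument for producing infinitely many constrained critical points of $I_\mu$ on $K_R$ is essentially the same as the paper's (Lemma \ref{lm3} via Szulkin's Theorem \ref{multiplicity-result}), and that part is fine.

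The gap is in your promotion step. You propose to pass from constrained to free critical points by showing $\|u_k\|_{L^\infty}<R$ through a Moser iteration giving an $R$-independent $L^\infty$ bound in terms of $\|u_k\|_{\cX^1_0}$, and then taking $R$ large. This fails for two reasons. First, a constrained critical point of $I_K=\Psi_K-\Phi$ in Szulkin's sense only satisfies the variational inequality \eqref{critical-point}; it does not a priori solve any PDE, truncated or otherwise, so there is nothing to iterate on. Second, even if $u_k$ did solve \eqref{e2}, the right-hand side is $|u_k|^{p-2}u_k+\mu|u_k|^{q-2}u_k$ with $p$ possibly supercritical, and a Moser iteration launched from the mere control of $\|u_k\|_{\cX^1_0}$ (equivalently $\|u_k\|_{L^{2^*}}$) cannot yield an $L^\infty$ bound independent of the a priori $L^\infty$ bound $R$; any such estimate would feed $R$ back into itself.

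The paper avoids this entirely: it never tries to show the constraint is inactive. Instead it verifies the pointwise invariance condition (Lemma \ref{lm5}): for $\tilde u\in K(r)$ one solves the \emph{linear} problem $L v=|\tilde u|^{p-2}\tilde u+\mu|\tilde u|^{q-2}\tilde u$ and uses the $L^\infty$ estimate of Lemma \ref{l-infty-estimate} to get $\|v\|_{L^\infty}\le C(r^{p-1}+\mu r^{q-1})$. Requiring $v\in K(r)$ forces $C(r^{p-2}+\mu r^{q-2})\le 1$, which holds for $r$ in a bounded interval $[r_1,r_2]$ and $\mu$ small---note the condition \emph{fails} for large $r$ since $p>2$, so ``fix $R$ large enough'' is exactly the wrong direction. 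Once pointwise invariance holds, Theorem \ref{t1} gives $\tilde u=v$ directly, with no need for the strict inequality $\|\tilde u\|_{L^\infty}<r$. This is the mechanism you should replace your Moser step with.
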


Our last result shows that in the case when $\Omega$ is a symmetric domain,  solutions enjoy some symmetry properties. Before stating our last result we let
\begin{equation}\label{reflection}
\sigma_j: \R^N\rightarrow\R^N,~~~x=(x_1,\dots,x_j,\dots,x_N)\mapsto\sigma_j(x)=(x_1,\dots,-x_j,\dots,x_N)
\end{equation}
being the reflection with respect to the $j$-th coordinate.
Our symmetry theorem reads as follows.
\begin{thm}\label{third-main-result}
	Assume that $\Omega$ is a symmetric domain and let $\sigma_j$ be the reflection defined above. Then if $N>2$ and $1<q<2<p$, there exists $\mu_\#>0$ such that for every $\mu\in(0,\mu_\#)$ problem \eqref{e2} admits a nontrivial solution $u\in \cX^1_0\cap L^{\infty}(\R^N)$ satisfying
	\begin{equation}
	u(\sigma_j(x))=-u(x) \quad\quad\forall x\in\Omega, \quad\forall j=1,\dots,N.
	\end{equation}
\end{thm}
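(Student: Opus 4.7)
The plan is to reduce the symmetric problem to an application of the same variational machinery that yields Theorems \ref{first-main-result}--\ref{second-main-result}, but carried out inside the closed linear subspace of fully antisymmetric functions
\[
\cX^1_{0,\sigma} := \bigl\{ u \in \cX^1_0 : u\circ\sigma_j = -u \text{ a.e.\ in } \R^N \text{ for every } j=1,\dots,N\bigr\}.
\]
Since $\Omega$ is $\sigma_j$-symmetric, the operator $L=-\Delta+(-\Delta)^s$ commutes with each reflection $\sigma_j$, and the nonlinearities $|u|^{p-2}u$ and $|u|^{q-2}u$ are odd, so the associated energy
\[
I_\mu(u) = \frac12\|u\|_{\cX^1_0}^2 - \frac1p\int_\Omega |u|^p\,dx - \frac{\mu}{q}\int_\Omega |u|^q\,dx
\]
is invariant under the finite group $G\cong (\Z/2\Z)^N$ generated by $\sigma_1,\dots,\sigma_N$. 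A principle-of-symmetric-criticality argument then guarantees that a critical point of $I_\mu$ restricted to $\cX^1_{0,\sigma}$ is a genuine critical point on the whole space $\cX^1_0$, and hence a weak solution of \eqref{e2} enjoying the desired antisymmetry.

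Next, I would intersect the convex admissible set $K\subset\cX^1_0$ used in the proofs of Theorems \ref{first-main-result}--\ref{second-main-result} (an $L^\infty$-ball adapted so as to absorb the supercritical term) with the antisymmetric subspace, setting $K_\sigma := K\cap\cX^1_{0,\sigma}$. Because each $\sigma_j$ is an isometry of $\cX^1_0$ that preserves $\Omega$ and commutes with the truncation $u\mapsto\sign(u)\min(|u|,M)$, the set $K_\sigma$ is a nonempty closed convex subset of $\cX^1_{0,\sigma}$, and the duality/selection maps underlying the variational principle of \cite{moameni2017variational,moameni2018variational} can be chosen $G$-equivariant. One then verifies the hypotheses of that principle for the triple $(I_\mu,K_\sigma,\cX^1_{0,\sigma})$, essentially repeating the analysis of Theorem \ref{second-main-result} in the smaller Hilbert space.

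Nontriviality of the minimizer follows from the concave term: for $\mu$ in a (possibly smaller) interval $(0,\mu_\#)$, the concavity of $|u|^q$ with $1<q<2$ forces $\inf_{K_\sigma} I_\mu <0$. I would verify this by testing with an explicit odd profile such as $x_1 x_2 \cdots x_N\,\phi(|x|)$ cut off to lie in $\Omega$, and rescaling. Since the zero function is the only constant in $\cX^1_{0,\sigma}$, any negative-energy minimizer is automatically nontrivial. Global $L^\infty$-regularity on $\R^N$ is then recovered from the same Moser/De Giorgi truncation scheme already invoked for the previous theorems, which plainly preserves the $G$-symmetry.

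I expect the main obstacle to be verifying that the constructive ingredients of the variational principle in \cite{moameni2017variational,moameni2018variational} remain $G$-equivariant after the restriction to $K_\sigma$ --- concretely, that the selection map sending $u\in K_\sigma$ into the subdifferential of the convex part of $I_\mu$ can be chosen to take values in $\cX^1_{0,\sigma}$. This reduces to checking that the relevant projection and truncation operators commute with orthogonal reflections, which holds for the $L^2$-inner product and for pointwise truncations. Once this compatibility is secured, the remainder of the argument is a verbatim adaptation of Theorem \ref{second-main-result}.
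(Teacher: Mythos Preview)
Your approach is essentially the paper's: both restrict the variational machinery of Theorem~\ref{t1} to the convex set $\cK = K(r)\cap\cX^1_{0,\sigma}$ and then verify its two hypotheses there. Two points of imprecision are worth correcting. First, the appeal to Palais' principle of symmetric criticality is unnecessary and slightly misaimed: in this framework one never produces a constrained critical point that must be upgraded to an unconstrained one; rather, Theorem~\ref{t1} delivers a weak solution of \eqref{e2} directly once (i) and (ii) hold on $\cK$. Second, the ``main obstacle'' you identify is not about projections or truncations into a subdifferential. Hypothesis (ii) asks for $v\in\cK$ solving $Lv = |\tilde u|^{p-2}\tilde u + \mu|\tilde u|^{q-2}\tilde u$; existence and the $L^\infty$-bound come from Lemma~\ref{lm5} verbatim, and the only new check is that $v$ inherits the antisymmetry. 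The paper does this concretely: set $w(x)=v(\sigma_j(x))+v(x)$, use that $L$ commutes with $\sigma_j$ and that the nonlinearity is odd to get $Lw=0$ in $\Omega$ with $w=0$ outside, and apply the maximum principle (Proposition~\ref{maximum-principle}) to conclude $w\equiv 0$. This is equivalent to your observation that $L$ commutes with $\sigma_j$ together with uniqueness, but it is the inverse $L^{-1}$, not any truncation map, whose equivariance is at stake. Finally, the existence argument is a direct minimization as in Theorem~\ref{first-main-result} (Lemma~\ref{lm2}), not the genus argument of Theorem~\ref{second-main-result}; your odd test profile for negativity of the infimum is fine and matches the paper's ``argue as in Theorem~\ref{first-main-result}''.
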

We briefly comment on the proof of Theorem \ref{first-main-result}. The general strategy, inspired by \cite{moameni2017variational,moameni2018variational} is to consider a restriction of the Euler-Lagrange functional corresponding to \eqref{e2} on a convex set. Next, we show that the restricted functional possesses a critical  point by using the mountain pass theorem. Finally, in the last step, we show that these critical points are in fact solutions to the original problem. The proof of Theorems \ref{non-existence-result}, \ref{second-main-result}, and \ref{third-main-result} follows basically the same lines of thought. \\
The main result for the Hamiltonian system \eqref{hamiltonian system0}  reads as follows.
\begin{thm}\label{main-result-hamiltonian-system}
	Let $\Omega\subset\R^N$ be a bounded domain with $C^{1,1}$ boundary, and let $p'=p/(p-1)$. If $N>2p'$ and $1<q<p'<2<p$, then there exists $\mu_*>0$ such that for every $\mu\in(0,\mu_*)$ problem \eqref{hamiltonian system0} has a positive weak solution $(u,v)$ in $\cX^1_0\cap W^{2,p'}(\Omega)\cap L^{\infty}(\R^N)$.
\end{thm}

We shall prove the above theorem by finding the critical points of the functional 
\begin{align*}
    I(u)=\frac{1}{p'}\int_{\Omega}|Lu|^{p'}\ dx-\frac{1}{d}\int_{\Omega}|u|^{d}\ dx-\frac{\mu}{q}\int_{\Omega}|u|^{q}\ dx
    \end{align*}
on an appropriate convex and closed subset of $L^{\infty}(\R^N).$\\

The paper is organized as follows. In Section \ref{section:preliminary-functional-setting} we introduce some general facts from convex analysis as well as the corresponding function setting of our problem. Section \ref{section:proof-of-main-result} is devoted to the proof of main results. In Section \ref{section:comments} we briefly comment on the non-homogeneous supercritical problem. Finally, in Section \ref{section:hamiltonian-system}, we consider the case of the  Hamiltonian system.\\

\section{Preliminary and Functional settings}\label{section:preliminary-functional-setting}

In this section, we introduce some well-known results and definitions from convex analysis. Moreover, we also introduce the usual  functional settings related to our purpose.

\subsection{Preliminary: Some properties from convex analysis}

Given a reflexive Banach space $V$ and its topology dual $V^*$, we let $\langle\cdot,\cdot\rangle$ be the duality pairing between $V$ and $V^*$. Denote by $\sigma(V,V^*)$ the weak topology induced by $V$. Let $\Psi: V\to\R$ be a function. We say that $\Psi$ is lower semi-continuous if for every sequence $u_i\in V$ with $u_i\rightarrow u$ in the weak topology $\sigma(V,V^*)$, one has

\begin{equation*}
\Psi(u)\leq\liminf_{i\to\infty}\Psi(u_i).
\end{equation*} 

Let now $\Psi:V\to\R\cup\{\infty\}$ be a proper (i.e. $Dom(\Psi)=\{u\in V: \Psi(u)<\infty\}\neq\emptyset$) convex function. The subdifferential $\partial\Psi(u)$ is the set-valued operator defined as follows. If $u\in Dom(\Psi)$,

\begin{equation*}
\partial\Psi(u)=\{u^*\in V^*:\langle u^*, v-u\rangle+\Psi(u)\leq \Psi(v)~~\text{for all}~~v\in V\},
\end{equation*}
and if $u\notin Dom(\Psi)$, $\partial\Psi(u)=\emptyset$. If $\Psi$ is G$\hat{\text{a}}$teaux differentiable at $u$ with $D\Psi(u)$ being the derivative of $\Psi$ at $u$, then from \cite{ekeland1976convex}, we have $\partial\Psi(u)=D\Psi(u)$. We then have the following

\begin{prop}\label{lower-semi-continuous-properti}
	Let $F:V\to\R\cup\{\infty\}$ be a convex function. If $F$ is G$\hat{\text{a}}$teaux differentiable function at $u\in V$, and if $u_i$ is a sequence weakly converging to $u$ in $V$ (i.e., $u_i\rightharpoonup u$ weakly in $V$), then
	
	\begin{equation*}
	F(u)\leq\liminf_{i\to\infty}F(u_i).
	\end{equation*}
\end{prop}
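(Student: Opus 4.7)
The plan is to observe that for a convex function which is G\^{a}teaux differentiable at $u$, the subgradient inequality
\begin{equation*}
F(v) \geq F(u) + \langle DF(u), v-u\rangle \quad \text{for all } v \in V
\end{equation*}
holds globally, and then to pair this with the definition of weak convergence. This is a standard fact from convex analysis; the excerpt has already recorded (citing \cite{ekeland1976convex}) that G\^{a}teaux differentiability at $u$ forces $\partial F(u) = \{DF(u)\}$, so one element of the subdifferential is available, and the defining inequality of $\partial F(u)$ is exactly the displayed inequality above.

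The key steps, in order, would be the following. \textbf{Step 1.} Invoke the identification $\partial F(u) = \{DF(u)\}$ recalled just before the statement, so that the subgradient inequality is at our disposal. \textbf{Step 2.} Specialize the subgradient inequality to $v = u_i$, obtaining
\begin{equation*}
F(u_i) \geq F(u) + \langle DF(u), u_i - u\rangle.
\end{equation*}
\textbf{Step 3.} Pass to the $\liminf$ as $i \to \infty$. Because $DF(u) \in V^*$ is a fixed continuous linear functional and $u_i \rightharpoonup u$ weakly in $V$, by the very definition of the weak topology $\sigma(V,V^*)$ we have $\langle DF(u), u_i - u\rangle \to 0$. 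Consequently,
\begin{equation*}
\liminf_{i\to\infty} F(u_i) \geq F(u) + \lim_{i\to\infty}\langle DF(u), u_i - u\rangle = F(u),
\end{equation*}
which is exactly the claim.

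There is essentially no serious obstacle here; the proof is short and purely formal. The only point one must be slightly careful about is justifying that the subgradient inequality truly holds globally on $V$ (not merely in a neighborhood of $u$), but this is immediate from convexity plus G\^{a}teaux differentiability via the usual one-variable argument: for $v \in V$ and $t \in (0,1)$, convexity gives $F(u + t(v-u)) \leq (1-t)F(u) + tF(v)$, so $\tfrac{F(u+t(v-u)) - F(u)}{t} \leq F(v) - F(u)$, and letting $t \downarrow 0$ produces $\langle DF(u), v-u\rangle \leq F(v) - F(u)$. After that, the argument reduces to a one-line application of weak convergence.
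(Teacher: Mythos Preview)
Your argument is correct and is exactly the standard proof: the subgradient inequality $F(u_i)\geq F(u)+\langle DF(u),u_i-u\rangle$ combined with $\langle DF(u),u_i-u\rangle\to 0$ from weak convergence. The paper itself does not supply a proof of this proposition; it is stated as a preliminary fact from convex analysis (with the identification $\partial F(u)=\{DF(u)\}$ cited to \cite{ekeland1976convex} just beforehand), so your proposal simply fills in the omitted standard details.
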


\begin{defi}
	The function $I=\Psi-\Phi$ on $V$ satisfies the hypothesis \textbf{(H)} if
	\begin{itemize}
		\item [$(i)$] $\Phi\in C^1(V,\R)$ and
		\item [$(ii)$] $\Psi:V\to(-\infty,+\infty]$ is proper, convex and lower semi-continuous.
	\end{itemize} 
\end{defi}

We now recall the following definition of a critical point of $I=\Psi-\Phi$ due to Szulkin, see \cite{szulkin1986minimax}.

\begin{defi}\label{def3}
	A point $u\in V$ is a critical point of $I=\Psi-\Phi$ if $u\in Dom(\Psi)$ and it satisfies the inequality
	
	\begin{equation}\label{critical-point}
	\langle D\Phi(u), u-v\rangle+\Psi(v)-\Psi(u)\geq0,~~\text{for all}~~v\in V.
	\end{equation}
\end{defi}

We also have the following

\begin{defi}
	Let $I=\Psi-\Phi$ and $c\in\R$. We say that $I$ satisfies the compactness condition of Palais-Smale type \textbf{(PS)} if every sequence $\{u_i\}$ such that $I(u_i)\to c$ and 
	
	\begin{equation}
	\langle D\Phi(u_i), u_i-v\rangle+\Psi(v)-\Psi(u_i)\geq-\varepsilon_i\|u_i-v\|_{V},~~~\text{for all}~~v\in V,
	\end{equation}
	
	where $\varepsilon_i\to0$, possesses a convergence subsequence.
\end{defi}

Let the genus of a set now be defined in the following way. This notion is crucial in obtaining multiplicity result. More details about the genus of a set can be found in \cite{ambrosetti1983some,rabinowitz1974variational}.

\begin{defi}
	Let $X=\{A\subset V: A~\text{is closed and}~A=-A\}$. For a nonempty set $A\in X$, we define genus $k$ (denoted $\gamma(A)=k$) to be the least integer $k$ such that there exists an odd and continuous map $h: A\to\R^k\setminus\{0\}$. If there is no such $k$, set $\gamma(A)=\infty$. In addition, $\gamma(\emptyset)=0$.
\end{defi}

\begin{prop}\label{genus}
	Assume that $W$ is a linear subspace of $V$ with $\dim W=k$. If $A\subset W$ is a symmetric bounded neighbourhood of $0$ in $W$, then $\gamma(A)=k$. 
\end{prop}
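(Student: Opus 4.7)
The plan is to establish the equality $\gamma(A)=k$ by verifying the two inequalities $\gamma(A)\le k$ and $\gamma(A)\ge k$ separately, after reducing to the case $W=\R^k$. Since $W$ is a $k$-dimensional subspace of the Banach space $V$, it is linearly homeomorphic to $\R^k$; pick once and for all an isomorphism $\phi\colon W\to\R^k$. The map $\phi$ is odd and continuous with continuous inverse, and the genus is clearly invariant under such odd homeomorphisms, so without loss of generality I may assume $W=\R^k$ and $A\subset \R^k$ is a symmetric bounded neighbourhood of the origin.

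For the upper bound $\gamma(A)\le k$, I would simply exhibit an explicit odd continuous map from $A$ into $\R^k\setminus\{0\}$ (with the standing convention, tacit in Definition of $\gamma$, that the origin is excluded from the relevant part of $A$): the restriction of the identity $x\mapsto x$ on $\R^k$ is odd, continuous, and avoids $0$ on $A\setminus\{0\}$. By the definition of the genus this gives $\gamma(A)\le k$ at once.

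For the lower bound $\gamma(A)\ge k$, I would argue by contradiction: assume that there exists an odd continuous map $h\colon A\to \R^{k-1}\setminus\{0\}$. The key observation is that, since $A$ is a neighbourhood of the origin in $\R^k$, it contains some closed ball $\overline{B_\varepsilon(0)}$, and in particular it contains the sphere $S_\varepsilon=\partial B_\varepsilon(0)\simeq S^{k-1}$. The restriction $h|_{S_\varepsilon}\colon S^{k-1}\to \R^{k-1}\setminus\{0\}$ is then a continuous odd map, and composing with the radial projection $y\mapsto y/|y|$ produces a continuous odd map $S^{k-1}\to S^{k-2}$. This directly contradicts the Borsuk--Ulam theorem, which forbids the existence of such a map. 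Hence no $h$ into $\R^{k-1}\setminus\{0\}$ can exist, which proves $\gamma(A)\ge k$.

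The only genuinely non-elementary ingredient is the Borsuk--Ulam theorem used in the lower bound, but that is a classical topological fact and will not be re-proved here; everything else is a direct verification from the definitions, so I do not anticipate a substantial obstacle. The statement itself, together with this proof, is standard (see for instance the references \cite{ambrosetti1983some,rabinowitz1974variational} cited just above) and is recorded here only as a tool that will later be used to construct symmetric sets of arbitrarily large genus in the multiplicity arguments of Section \ref{section:proof-of-main-result}.
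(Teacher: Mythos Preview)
The paper does not supply its own proof of this proposition: it is merely recorded as a well-known fact, with a pointer to the references \cite{ambrosetti1983some,rabinowitz1974variational} for details. There is therefore no paper-proof to compare against, and your two-step outline (identity for the upper bound, Borsuk--Ulam on a small sphere for the lower bound) is exactly the standard textbook argument one finds in those references.

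One point deserves a cleaner treatment, however. As literally stated, $A$ is a neighbourhood of $0$ in $W$, so $0\in A$; but then any odd continuous $h$ defined on $A$ must satisfy $h(0)=0$, and hence no odd continuous map $A\to\R^k\setminus\{0\}$ can exist at all, which with the paper's Definition~2.5 would force $\gamma(A)=\infty$ rather than $k$. You try to sidestep this with a parenthetical about a ``standing convention'' excluding the origin, but that is not really how the issue is resolved in the literature: the classical statement (Rabinowitz) is about the \emph{boundary} $\partial A$ of a bounded symmetric neighbourhood of $0$, which of course does not contain the origin. And indeed, when the paper actually invokes the proposition in Step~2 of the proof of Lemma~\ref{lm3}, the set called $A$ there is the sphere $\{\sum\beta_i\xi_i:\sum\beta_i^2=\rho^2\}$ in a $j$-dimensional subspace, i.e.\ precisely such a boundary. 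With that reading your argument goes through without the ad hoc convention; it would be cleaner to state this explicitly rather than to patch the upper bound by restricting to $A\setminus\{0\}$.
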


Define $Y=\{A\subset V: A~\text{is closed, bounded and}~ A\neq\emptyset\}$. Then by \cite[$\S15$, $VII$ and $\S29$, $IV$ ]{kuratowski1958topologie}, $Y$ is a complete metric space with the metric

\begin{equation*}
\dist(A,B)=\max\{\sup_{x\in A}\dist(x,B),\sup_{y\in B}\dist(y,A)\}.
\end{equation*}

Recall that $\dist(z,A)=\inf_{a\in A}|z-a|$. Let $Z=\{A\in Y: A~\text{is compact and symmetric}\}$ and define

\begin{equation}\label{a1}
Z_j=\overline{\{A\in Z: 0\notin A, ~\gamma(A)\geq j\}}^{Z}.
\end{equation}

Here, $\overline{U}^Z$ is the closure of $U$ in $Z$. Then, $(Z,\dist)$ and $(Z_j,\dist)$ are complete metric spaces.\\

We conclude this subsection with the following crucial result on critical points of even functions of type \textbf{(H)} stated in \cite{szulkin1986minimax}.

\begin{thm}\label{multiplicity-result}
	Suppose that $I:V\to (-\infty,+\infty]$ satisfies \textbf{(H)} and \textbf{(PS)}, $I(0)=0$ and $\Psi,~\Phi$ are even. Define
	
	\begin{equation*}
	c_j=\inf_{A\in Z_j}\sup_{u\in A}I(u).
	\end{equation*}
	
	If $-\infty<c_j<0$ for $j=1,\dots,k$, then $I=\Psi-\Phi$ has at least $k$ distinct pairs of nontrivial critical points in the sense of Definition \ref{def3}.
\end{thm}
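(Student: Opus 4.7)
The plan is to adapt the classical Lusternik--Schnirelmann multiplicity theorem based on Krasnoselskii's genus to Szulkin's non-smooth framework. The strategy splits into three parts: build an odd deformation lemma for $I=\Psi-\Phi$; use it in a standard minimax argument over the classes $Z_j$; and, when several of the $c_j$ coincide, exploit the genus to extract the required number of distinct critical points at that common level.

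First I would establish a deformation lemma suited to type \textbf{(H)} functionals. Given a value $\bar c$ and $\epsilon_0>0$, the goal is to produce $\epsilon\in(0,\epsilon_0)$ and an odd continuous map $\eta:V\to V$ with $\eta(u)=u$ when $|I(u)-\bar c|\geq\epsilon_0$, and $I(\eta(u))\leq\bar c-\epsilon$ whenever $I(u)\leq\bar c+\epsilon$, either globally when $\bar c$ is not a critical value or on $V\setminus N$ for a preassigned open neighborhood $N$ of the critical set $K_{\bar c}$. At each non-critical point $u_0$ the failure of \eqref{critical-point} yields $w_0\in V$ and $\delta_0>0$ with
\[
\langle D\Phi(u_0),u_0-w_0\rangle+\Psi(w_0)-\Psi(u_0)<-\delta_0\|u_0-w_0\|_V,
\]
and continuity of $D\Phi$, lower semicontinuity of $\Psi$, and convexity of $\Psi$ make this inequality persist on a neighborhood of $u_0$ with $w_0$ fixed. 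Gluing such local descent directions via a locally finite partition of unity yields a continuous selection $u\mapsto w(u)$ on the complement of the critical set, and $h_t(u)=(1-t)u+tw(u)$ then decreases $I$ by an amount proportional to $t$ for small $t>0$. Oddness is enforced by antisymmetrizing, replacing $w(u)$ by $\tfrac12(w(u)-w(-u))$, which still lies in $\text{Dom}(\Psi)$ by convexity and the evenness of $\Psi$. The \textbf{(PS)} condition ensures that $K_{\bar c}$ is compact and provides a uniform lower bound on descent rates away from $K_{\bar c}$, allowing the local constructions to be patched into a global deformation.

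Equipped with this deformation, the minimax argument proceeds along classical lines. Proposition \ref{genus} shows that each $Z_j$ is nonempty (take the boundary of a symmetric bounded neighborhood of $0$ in a $j$-dimensional subspace of $V$), and $Z_{j+1}\subset Z_j$ yields $c_1\leq c_2\leq\dots\leq c_k<0$ with each $c_j$ finite by assumption. If $c_j$ is not a critical value, pick $A\in Z_j$ with $\sup_A I<c_j+\epsilon$ and apply $\eta$: the image $\eta(A)$ is compact and symmetric (by oddness of $\eta$), avoids $0$ (since every $u\in A$ satisfies $I(u)<c_j+\epsilon<0$, hence $I(\eta(u))\leq c_j-\epsilon<0=I(0)$, forcing $\eta(u)\neq 0$), has genus $\geq j$ by the invariance of the genus under odd continuous maps, and satisfies $\sup_{\eta(A)} I\leq c_j-\epsilon$, contradicting the definition of $c_j$. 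If instead $c_j=c_{j+1}=\dots=c_{j+m-1}=c$ with $m\geq 2$, the subadditivity property $\gamma(A\setminus N)\geq\gamma(A)-\gamma(\overline N)$, applied with $A\in Z_{j+m-1}$ satisfying $\sup_A I<c+\epsilon$ and a small symmetric open neighborhood $N$ of $K_c$ with $\gamma(\overline N)=\gamma(K_c)$, combined with the deformation on $V\setminus N$, forces $\gamma(K_c)\geq m$; this yields at least $m$ distinct pairs of critical points in $K_c$. In all cases nontriviality follows from $c_j<0=I(0)$, and summing yields at least $k$ distinct pairs of nontrivial critical points.

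The main obstacle is the deformation lemma in the non-smooth setting: one cannot simply flow along $-DI$, so descent directions must be constructed pointwise from the Szulkin inequality \eqref{critical-point} and glued via partitions of unity, requiring a delicate interplay between the continuity of $D\Phi$, the lower semicontinuity of $\Psi$, and the convexity of $\Psi$. Enforcing oddness while keeping the antisymmetrized direction in $\text{Dom}(\Psi)$ leans on the evenness of $\Psi$ via $\Psi(\tfrac12(w(u)-w(-u)))\leq\tfrac12\Psi(w(u))+\tfrac12\Psi(-w(-u))=\tfrac12\Psi(w(u))+\tfrac12\Psi(w(-u))<\infty$. A secondary subtlety is the compactness of $K_c$ under \textbf{(PS)}, which underpins both the finite covering used in the gluing step and the continuity property of the genus allowing one to replace $K_c$ by a close neighborhood of the same genus.
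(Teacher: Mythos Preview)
The paper does not supply its own proof of this theorem: it is quoted verbatim as a known result from \cite{szulkin1986minimax} (see the sentence immediately preceding the statement in Section~\ref{section:preliminary-functional-setting}), so there is no in-paper argument to compare against. Your sketch is a faithful outline of Szulkin's original proof---an equivariant deformation lemma for functionals of type \textbf{(H)} built from local descent directions obtained by negating inequality~\eqref{critical-point}, glued by a partition of unity and antisymmetrized using the evenness of $\Psi$, followed by the standard Lusternik--Schnirelmann argument with the genus---so in spirit you are reproducing exactly what the paper relies on by citation. One small point to tighten: the classes $Z_j$ in \eqref{a1} are defined as closures in the Hausdorff metric, so when you argue that $\eta(A)\in Z_j$ you should either work with the dense subclass $\{A\in Z:0\notin A,\ \gamma(A)\ge j\}$ and invoke continuity of the genus, or check that the closure does not disrupt the minimax identity; Szulkin handles this, but it is worth flagging since your sketch treats $Z_j$ as if it were simply $\{A\in Z:0\notin A,\ \gamma(A)\ge j\}$.
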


\subsection{Functional settings} Here, we introduce the functional setting related to the operator $L=-\Delta+(-\Delta)^s$. For all $s\in(0,1)$ we set

\begin{equation*}
[u]^2_s:=\int_{\R^N}\int_{\R^N}\frac{(u(x)-u(y))^2}{|x-y|^{N+2s}}\ dxdy.
\end{equation*}

Throughout the paper, $\Omega\subset\R^N$ is a bounded domain with $C^1$ boundary. Let $\cX^1$ be the space defined as 

\begin{equation}
\cX^1=\{u:\R^N\to\R~\text{Lebesgue measurable}: u|_{\Omega}\in H^1(\Omega)~\text{and}~[u]_s<\infty\}.
\end{equation}

Then $\cX^1$ is a Banach space with the norm

\begin{equation*}
\|u\|^2_{\cX^1}=\|u\|^2_{H^1(\Omega)}+[u]^2_s.
\end{equation*}
Now, we set
\begin{equation}
\cX^1_0=\{u\in\cX^1: u=0~~\text{in}~~\R^N\setminus\Omega\}.
\end{equation}
Thanks to the regularity assumption on $\partial\Omega$, we have that every function $u\in\cX^1_0$ satisfies $u|_{\Omega}\in H^1_0(\Omega)$. Therefore, $\cX^1_0$ is a Hilbert space endowed with the norm

\begin{equation}\label{p_0}
\|u\|^2_{\cX^1_0}=\|\nabla u\|^2_{L^2(\Omega)}+[u]^2_s
\end{equation}
with scalar product

\begin{equation*}
\langle u,v\rangle_{\cX^1_0}=\int_{\Omega}\nabla u\cdot\nabla v\ dx+\int_{\R^N}\int_{\R^N}\frac{(u(x)-u(y))(v(x)-v(y))}{|x-y|^{N+2s}}\ dxdy\quad\text{for all}~~u,v\in\cX^1_0.
\end{equation*}
Notice that from \cite[Proposition 2.2]{di2012hitchhikers} there holds that
\begin{equation*}
    [u]^2_s\leq C\|\nabla u\|^2_{L^2(\Omega)}~~~\forall u\in\cX_0^1
\end{equation*}
for some positive constant $C>0$ independent on $u$. Therefore, thanks to \eqref{p_0}, we have\footnote{The notation $f\asymp g$ means that the two-sided estimate $C_1g\leq f\leq C_2g$ is true, where $C_1, C_2>0$ are two positive constants.}
\begin{equation}\label{p_1}
    \|u\|_{\cX_0^1}\asymp \|\nabla u\|_{L^2(\Omega)}~~~\forall u\in\cX_0^1.
\end{equation}
Let us now recall the maximum principle (see \cite[Theorem 1.2]{biagi2022mixed}) associated to $L$.

\begin{prop}\label{maximum-principle}
	Let $u\in H^1(\R^N)$ weakly satisfy $Lu\geq0$ in $\Omega$. If $u\geq0$ a.e. in $\R^N\setminus\Omega$, then $u\geq0$ a.e. in $\Omega$.
\end{prop}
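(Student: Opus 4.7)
The plan is to test the weak inequality $Lu \geq 0$ against the negative part $u^{-} := \max(-u,0)$ of $u$ and exploit a positivity property of the bilinear form associated with $L$. Writing $u = u^{+} - u^{-}$ with $u^{+} := \max(u,0)$, the hypothesis $u \geq 0$ a.e.\ in $\R^N \setminus \Omega$ gives $u^{-} \equiv 0$ a.e.\ outside $\Omega$, and together with $u \in H^1(\R^N)$ this places $u^{-}$ in $\cX^1_0$. Hence $u^{-}$ is a legitimate nonnegative test function, and the weak formulation of $Lu \geq 0$ yields $\langle Lu, u^{-}\rangle \geq 0$.

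Next I would compute each piece of the bilinear form on the pair $(u, u^{-})$. For the local term, the Stampacchia-type identity $\nabla u \cdot \nabla u^{-} = -|\nabla u^{-}|^2$ a.e.\ in $\Omega$ produces $\int_\Omega \nabla u \cdot \nabla u^{-}\, dx = -\|\nabla u^{-}\|_{L^2(\Omega)}^2$. For the nonlocal term, I would use the pointwise algebraic identity
\begin{equation*}
(u(x)-u(y))(u^{-}(x)-u^{-}(y)) = (u^{+}(x)-u^{+}(y))(u^{-}(x)-u^{-}(y)) - (u^{-}(x)-u^{-}(y))^2,
\end{equation*}
together with the sign inequality $(u^{+}(x)-u^{+}(y))(u^{-}(x)-u^{-}(y)) \leq 0$ for a.e.\ $(x,y) \in \R^N \times \R^N$. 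The latter is a short case-by-case check based on the fact that the supports of $u^{+}$ and $u^{-}$ are essentially disjoint. Dividing by $|x-y|^{N+2s}$ and integrating bounds the resulting double integral from above by $-[u^{-}]_s^2$, up to the normalization constant $c_{N,s}$.

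Combining the two estimates yields $0 \leq \langle Lu, u^{-}\rangle \leq -\|u^{-}\|_{\cX^1_0}^2$ (with the constant $c_{N,s}/2$ absorbed into the definition of the nonlocal norm), which forces $u^{-} = 0$ a.e.\ in $\Omega$, i.e.\ $u \geq 0$ a.e.\ in $\Omega$. The only genuinely delicate step is the pointwise sign inequality for the nonlocal contribution; admissibility of $u^{-}$ as a test function and the local chain rule identity are standard consequences of truncation arguments for $H^1$ maps. Since the statement is quoted from \cite{biagi2022mixed}, any further technical refinement (for instance the precise weak sense in which $Lu \geq 0$ is interpreted on $\Omega$) can be handled by approximating $u^{-}$ with nonnegative functions in $C^\infty_c(\Omega)$ and passing to the limit.
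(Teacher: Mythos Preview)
Your argument is correct and is precisely the standard proof of this maximum principle: test the weak inequality against $u^{-}$, use that $\nabla u^{+}\cdot\nabla u^{-}=0$ a.e.\ for the local part and the pointwise sign inequality $(u^{+}(x)-u^{+}(y))(u^{-}(x)-u^{-}(y))\le 0$ for the nonlocal part, and conclude $\|u^{-}\|_{\cX^1_0}=0$. There is no gap.

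Note, however, that the paper does \emph{not} give its own proof of this proposition: it is merely recalled from \cite[Theorem~1.2]{biagi2022mixed}. The argument you wrote is essentially the one carried out in that reference, so there is no meaningful methodological difference to discuss.
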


We end this subsection with the following a priori $L^{\infty}$-estimate from \cite[Theorem 4.7]{biagi2022mixed}.

\begin{lemma}\label{l-infty-estimate}
	Assume that $N>2$ and let $f\in L^p(\Omega)$ with $p>\frac{N}{2}$. Let $u\in \cX^1_0$ be a weak solution of 
	\begin{equation}
	Lu=f~~\text{in}~~\Omega,\quad\quad u=0~~\text{in}~~\R^N\setminus\Omega.
	\end{equation}
	Then $u\in L^{\infty}(\R^N)$ and there exists a positive constant $C>0$ such that
	\begin{equation}
	\|u\|_{L^{\infty}(\R^N)}\leq C\|f\|_{L^p(\Omega)}.
	\end{equation}
\end{lemma}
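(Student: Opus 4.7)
The plan is to adapt the classical De Giorgi--Stampacchia truncation method to the mixed operator $L=-\Delta+(-\Delta)^s$. First I would fix $k\ge 0$ and test the weak formulation
\begin{equation*}
\langle u,v\rangle_{\cX^1_0}=\int_\Omega f\,v\,dx
\end{equation*}
against $v=(u-k)_+$, which lies in $\cX^1_0$ because $u=0$ in $\R^N\setminus\Omega$ and $k\ge 0$. The local piece contributes $\|\nabla (u-k)_+\|_{L^2(\Omega)}^2$. For the nonlocal piece, the elementary pointwise inequality
\begin{equation*}
(u(x)-u(y))\bigl((u-k)_+(x)-(u-k)_+(y)\bigr)\ge\bigl((u-k)_+(x)-(u-k)_+(y)\bigr)^2
\end{equation*}
shows that the fractional bilinear form evaluated at $(u,(u-k)_+)$ is at least $[(u-k)_+]^2_s\ge 0$ and may be dropped. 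Writing $A(k):=\{x\in\Omega:u(x)>k\}$, this yields
\begin{equation*}
\|\nabla(u-k)_+\|_{L^2(\Omega)}^2\le\int_{A(k)}f\,(u-k)_+\,dx.
\end{equation*}

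Next I would combine the Sobolev embedding $\cX^1_0\hookrightarrow L^{2^*}(\Omega)$, with $2^*=2N/(N-2)$, and H\"older's inequality with exponents $p$, $2^*$, and $(1-\tfrac1p-\tfrac1{2^*})^{-1}$ to obtain
\begin{equation*}
\|(u-k)_+\|_{L^{2^*}(\Omega)}\le C\,\|f\|_{L^p(\Omega)}\,|A(k)|^{1-\frac1p-\frac1{2^*}}.
\end{equation*}
For $h>k$ one has $(h-k)\,|A(h)|^{1/2^*}\le\|(u-k)_+\|_{L^{2^*}(\Omega)}$, hence
\begin{equation*}
|A(h)|\le\Bigl(\tfrac{C\,\|f\|_{L^p(\Omega)}}{h-k}\Bigr)^{2^*}|A(k)|^{\beta},\qquad \beta:=2^*\Bigl(1-\tfrac1p-\tfrac1{2^*}\Bigr)=2^*-\tfrac{2^*}{p}-1.
\end{equation*}
A direct computation shows that the assumption $p>N/2$ is \emph{exactly} what forces $\beta>1$, so Stampacchia's iteration lemma applies and produces $d>0$ with $d\le C\|f\|_{L^p(\Omega)}$ (the constant depending only on $N$, $p$, $|\Omega|$) such that $|A(d)|=0$, i.e.\ $u\le d$ a.e.\ in $\Omega$. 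Applying the same argument to $-u$, which is a weak solution of $L(-u)=-f$ with the same zero exterior data, yields the matching lower bound, and hence $\|u\|_{L^\infty(\R^N)}\le C\|f\|_{L^p(\Omega)}$.

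The main obstacle, in my view, is \emph{not} the truncation itself but the handling of the nonlocal term: testing with the truncation $(u-k)_+$ is a local operation, whereas the fractional bilinear form is global. The decisive observation is that this form evaluated at the pair $(u,(u-k)_+)$ is nonnegative and in fact dominates $[(u-k)_+]^2_s$, so discarding it does not destroy the coercive estimate needed to feed the iteration. Once this nonnegativity is in place, the Sobolev inequality on $\cX^1_0$, the three-exponent H\"older split, and the classical Stampacchia scheme are entirely standard, and the dependence of the threshold $d$ on $\|f\|_{L^p(\Omega)}$ comes out linear, as required.
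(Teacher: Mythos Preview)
Your argument is correct. The De Giorgi--Stampacchia truncation is carried out cleanly: the pointwise inequality you use for the nonlocal bilinear form is valid, the three-exponent H\"older split is admissible because $p>N/2>\tfrac{2N}{N+2}$ when $N>2$, and your computation $\beta>1\Longleftrightarrow p>N/2$ is exactly what triggers Stampacchia's iteration lemma with a threshold $d$ that depends linearly on $\|f\|_{L^p(\Omega)}$.

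As for the comparison: the paper does \emph{not} prove this lemma at all. It is quoted verbatim as an a~priori estimate from \cite[Theorem~4.7]{biagi2022mixed} and used as a black box (notably in Lemma~\ref{lm5}). Your proof therefore supplies what the paper outsources. The method you chose is precisely the one used in the cited reference, so there is no substantive divergence in strategy; you have simply written out the standard argument that the authors elected to import.
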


\section{Proof of main results}\label{section:proof-of-main-result}

In this section, we present the variational scheme that allows us to obtain the existence of solutions for problem \eqref{e2}. 

Let $V$ be reflexive Banach space and $K$ be a nonempty, convex, and weakly closed subset of $V$. Let $\Psi: V\to \R\cup\{+\infty\}$ be a proper, convex, and lower semi-continuous function which is differentiable a la G$\hat{\text{a}}$teaux on $K$. Define the restriction $\Psi_K$ of $\Psi$ on $K$ as

\begin{equation}
\Psi_K(u)=\left\{\begin{aligned}
&\Psi(u)~~\text{if}~~ u\in K;\\
&+\infty~~\text{if}~~ u\notin K.
\end{aligned}
\right.
\end{equation}
Given a function $\Phi\in C^1(V,\R)$, we let $I_K: V\to (-\infty,+\infty]$ be the functional defined by
\begin{equation*}
I_K(u):=\Psi_K(u)-\Phi(u).
\end{equation*}

We now recall the following pointwise condition introduced in   \cite{moameni2018variational}.

\begin{defi}\label{def}
	We say that the triple $(\Psi, K, \Phi)$ satisfies the pointwise invariance condition at a point $u_0\in V$ if there is a convex G$\hat{\text{a}}$teaux differentiable function $G: V\to\R$ and a point $v_0\in K$ such that
	\begin{equation*}
	D\Psi(v_0)+DG(v_0)=D\Phi(u_0)+DG(u_0).
	\end{equation*}
\end{defi}

We then have the following variational principle established  in \cite{moameni2018variational}.

\begin{thm}\label{key-theorem-in-the-variational-methods}
	Let $V$ be a reflexive Banach space and $K$ be a convex and weakly closed subset of $V$. Let $\Psi: V\to\R\cup\{+\infty\}$ be a convex, lower semi-continuous function which is G$\hat{\text{a}}$teaux differentiable on $K$ and let $\Phi\in C^1(V,\R)$. Assume that the following assertions hold:
	
	\begin{itemize}
		\item [$(i)$] The functional $I_K: V\to\R\cup\{+\infty\}$ defined by $I_K(u)=\Psi_K(u)-\Phi(u)$ has a critical point $u_0\in V$ in the sense of Definition \ref{def3}, and;
		\item [$(ii)$] the triple $(\Psi, K, \Phi)$ satisfies the pointwise invariance condition at the point $u_0$.
	\end{itemize}
Then $u_0\in K$ is a solution of the equation
\begin{equation}
D\Psi(u)=D\Phi(u).
\end{equation}
\end{thm}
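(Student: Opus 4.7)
The plan is to recast the critical point condition as a convex minimization problem and then use the pointwise invariance condition to identify $v_0$, and hence $u_0$, as a global minimizer. First I would observe that any critical point $u_0$ of $I_K$ must lie in $K$, since otherwise $\Psi_K(u_0)=+\infty$ and the inequality in Definition \ref{def3} cannot hold. In that inequality, the case $v\notin K$ is trivially satisfied (because $\Psi_K(v)=+\infty$), so the only nontrivial content is the variational inequality
\begin{equation*}
\Psi(v)-\Psi(u_0)\geq\langle D\Phi(u_0),v-u_0\rangle\qquad\text{for all }v\in K.
\end{equation*}

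The key idea will be to exploit the convex function $G$ provided by the pointwise invariance condition. Since $G(v)-G(u_0)\geq\langle DG(u_0),v-u_0\rangle$ holds on all of $V$ by convexity of $G$, summing gives
\begin{equation*}
(\Psi+G)(v)-(\Psi+G)(u_0)\geq\langle D(\Phi+G)(u_0),v-u_0\rangle\qquad\text{for all }v\in K.
\end{equation*}
I would then introduce the auxiliary functional
\begin{equation*}
J(u):=\Psi(u)+G(u)-\langle D\Phi(u_0)+DG(u_0),u\rangle,
\end{equation*}
which is convex on $V$ and G\^ateaux differentiable on $K$. The inequality above rearranges to $J(v)\geq J(u_0)$ for every $v\in K$, so $u_0$ minimizes $J$ on $K$. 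At the same time, the pointwise invariance identity $D\Psi(v_0)+DG(v_0)=D\Phi(u_0)+DG(u_0)$ is precisely the statement $DJ(v_0)=0$. Convexity of $J$ together with G\^ateaux differentiability at $v_0\in K$ gives the subgradient inequality $J(w)\geq J(v_0)+\langle DJ(v_0),w-v_0\rangle=J(v_0)$ for every $w\in V$, showing that $v_0$ is a \emph{global} minimizer of $J$ on $V$.

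Since $v_0\in K$ and $u_0$ minimizes $J$ on $K$, the double inequality $J(v_0)\leq J(u_0)\leq J(v_0)$ forces $u_0$ to be a global minimizer of $J$ on $V$ as well. Convexity and G\^ateaux differentiability of $J$ at $u_0$ then give $DJ(u_0)=0$, which after cancelling $DG(u_0)$ is exactly $D\Psi(u_0)=D\Phi(u_0)$, as required. The main conceptual obstacle to anticipate is that working directly with the critical point inequality for $I_K$ only yields the one-sided directional inequality $\langle D\Psi(u_0)-D\Phi(u_0),w-u_0\rangle\geq 0$ valid for $w\in K$, which is strictly weaker than equality in $V^{\ast}$; the role of $G$ in the pointwise invariance condition is to supply precisely the convex ``curvature'' needed to convert that one-sided information into the full identity of derivatives at $u_0$, by making the constrained minimum on $K$ coincide with an unconstrained minimum on $V$.
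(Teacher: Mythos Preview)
Your argument is correct. The paper does not actually prove this general theorem---it is quoted from \cite{moameni2018variational}---but it does prove the concrete instance Theorem~\ref{t1} (where $\Psi$ is the quadratic energy $\frac12\|\cdot\|_{\cX^1_0}^2$ and $G\equiv 0$), and the route taken there differs from yours. In the paper's proof of Theorem~\ref{t1}, one plugs $\phi=\tilde v$ into the critical-point inequality, tests the equation $D\Psi(\tilde v)=D\Phi(\tilde u)$ with $\tilde v-\tilde u$, and then uses the explicit Hilbert-space identity
\[
\tfrac12\|\tilde v-\tilde u\|_{\cX^1_0}^2=\Psi(\tilde u)-\Psi(\tilde v)+\langle \tilde v,\tilde v-\tilde u\rangle_{\cX^1_0}
\]
to force $\tilde v=\tilde u$; the conclusion $D\Psi(\tilde u)=D\Phi(\tilde u)$ is then read off from the equation satisfied by $\tilde v$.

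Your approach is genuinely different and in some ways cleaner: by passing to the auxiliary convex functional $J=\Psi+G-\langle D\Phi(u_0)+DG(u_0),\cdot\rangle$, you convert the constrained critical-point inequality into the statement that $u_0$ minimizes $J$ over $K$, and the pointwise invariance condition into $DJ(v_0)=0$, hence $v_0$ is an unconstrained minimizer. The sandwich $J(v_0)\le J(u_0)\le J(v_0)$ then promotes $u_0$ to a global minimizer without ever asserting $u_0=v_0$, and you read off $DJ(u_0)=0$ directly. This avoids the quadratic/Hilbert structure used in the paper's special-case proof and handles the general $G$ with no extra work; the paper's approach, by contrast, gives the sharper conclusion $\tilde u=\tilde v$ in its setting (which is useful, for instance, to transfer sign or symmetry information from $\tilde v$ to $\tilde u$), at the cost of relying on the explicit form of $\Psi$.
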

In view of the above variational methods, in order to prove our existence results, one needs to apply Theorem \ref{key-theorem-in-the-variational-methods} in our context. For that, we proceed as follows.

Here and throughout the paper, $V=\cX^1_0\cap L^{p}(\Omega)$. Clearly, $V$ is a reflexive Banach space with the norm
\begin{equation*}
\|u\|_V:=\|u\|_{\cX^1_0}+\|u\|_{L^p(\Omega)}.
\end{equation*}
Let $K$ be the appropriate convex and weakly closed subset (to be specified later) of $V$. Define $\Phi: V\to \R$ as 
\begin{equation*}
\Phi(u)=\frac{1}{p}\int_{\Omega}|u|^p\ dx+\frac{\mu}{q}\int_{\Omega}|u|^q\ dx.
\end{equation*}
Clearly, $\Phi\in C^1(V,\R)$. Now, we consider the proper, convex, and lower semi-continuous function $\Psi: V\to\R$ defined as
\begin{equation*}
\Psi(u)=\frac{1}{2}\int_{\Omega}|\nabla u|^2\ dx+\frac{1}{2}\int_{\R^N}\int_{\R^N}\frac{(u(x)-u(y))^2}{|x-y|^{N+2s}}\ dxdy.
\end{equation*} 
Notice that $\Psi$ is G$\hat{\text{a}}$teaux differentiable with
\begin{equation*}
D\Psi(u)(v)=\langle u,v\rangle_{\cX^1_0}=\int_{\Omega}\nabla u\cdot\nabla v\ dx+\int_{\R^N}\int_{\R^N}\frac{(u(x)-u(y))(v(x)-v(y))}{|x-y|^{N+2s}}\ dxdy.
\end{equation*}
We now set $I(u)=\Psi(u)-\Phi(u)$ i.e.,
\begin{equation*}
I(u)=\frac{1}{2}\int_{\Omega}|\nabla u|^2\ dx+\frac{1}{2}\int_{\R^N}\int_{\R^N}\frac{(u(x)-u(y))^2}{|x-y|^{N+2s}}\ dxdy-\frac{1}{p}\int_{\Omega}|u|^p\ dx-\frac{\mu}{q}\int_{\Omega}|u|^q\ dx.
\end{equation*}
Then, $I$ is the Euler-Lagrange functional corresponding to \eqref{e2}. Denote by $I_K=\Psi_K-\Phi$ its restriction to $K$. We have the following.

\begin{thm}\label{t1}
	Let $V=\cX^1_0\cap L^p(\Omega)$, and let $K$ be a convex and weakly closed subset of $V$. Suppose the following two assertions hold:
	\begin{itemize}
		\item [$(i)$] The functional $I_K$ has a critical point $\tilde{u}\in V$ in the sense of Definition \ref{def3}, and;
		\item [$(ii)$] there exist $\tilde{v}\in K$ such that
		\begin{equation}\label{v-tilde-equation}
		-\Delta\tilde{v}+(-\Delta)^s\tilde{v}=D\Phi(\tilde{u})=|\tilde{u}|^{p-2}\tilde{u}+\mu|\tilde{u}|^{q-2}\tilde{u},
		\end{equation}
		in the weak sense i.e.,
		\begin{equation}\label{v-tilde-weak-formulation}
		\int_{\Omega}\nabla\tilde{v}\cdot\nabla\phi\ dx+\int_{\R^N}\int_{\R^N}\frac{(\tilde{v}(x)-\tilde{v}(y))(\phi(x)-\phi(y))}{|x-y|^{N+2s}}\ dxdy=\int_{\Omega}D\Phi(\tilde{u})\phi\ dx, \quad\forall\phi\in V.
		\end{equation}
	\end{itemize}
Then $\tilde{u}\in K$ is a weak solution of the equation
\begin{equation*}
-\Delta u+(-\Delta)^su=|u|^{p-2}u+\mu |u|^{q-2}u.
\end{equation*}
\end{thm}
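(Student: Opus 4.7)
The plan is to recognize this statement as a direct application of Theorem \ref{key-theorem-in-the-variational-methods} in the concrete functional setting introduced above, with the auxiliary convex function $G$ taken to be identically zero. With this choice, the pointwise invariance condition of Definition \ref{def} collapses to the existence of some $v_0\in K$ with $D\Psi(v_0)=D\Phi(\tilde u)$; unfolding $D\Psi$ gives precisely the weak formulation \eqref{v-tilde-weak-formulation}, so $v_0:=\tilde v$ supplied by hypothesis (ii) does the job. Together with (i), Theorem \ref{key-theorem-in-the-variational-methods} yields $\tilde u\in K$ solving $D\Psi(u)=D\Phi(u)$, which is the weak form of \eqref{e2}.

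For a self-contained argument I would unfold Definition \ref{def3} applied to $I_K=\Psi_K-\Phi$: the critical point satisfies $\tilde u\in\dom(\Psi_K)=K$ and
\[
\langle D\Phi(\tilde u),\tilde u-v\rangle+\Psi(v)-\Psi(\tilde u)\geq 0\quad\text{for every }v\in K,
\]
the inequality being trivial for $v\notin K$ since then $\Psi_K(v)=+\infty$. Testing against $v=\tilde v\in K$ and using (ii) to rewrite $D\Phi(\tilde u)=D\Psi(\tilde v)$ as elements of $V^*$, this becomes
\[
\langle D\Psi(\tilde v),\tilde u-\tilde v\rangle+\Psi(\tilde v)-\Psi(\tilde u)\geq 0.
\]
The subgradient inequality for the convex function $\Psi$ at $\tilde v$ reads $\Psi(\tilde u)\geq\Psi(\tilde v)+\langle D\Psi(\tilde v),\tilde u-\tilde v\rangle$, i.e.\ the reverse inequality, so equality holds.

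Since $\Psi(u)=\tfrac12\|u\|_{\cX^1_0}^2$ is the square of a Hilbert-space norm on $\cX^1_0$, it is strictly convex, and the equality case of the subgradient inequality forces $\tilde u=\tilde v$. Consequently $\tilde u\in K$ and satisfies $D\Psi(\tilde u)=D\Phi(\tilde u)$ in $V^*$, which is the weak formulation of $Lu=|u|^{p-2}u+\mu|u|^{q-2}u$. The only genuine delicacy is the passage from a one-sided variational inequality on $K$ to a full equation in $V^*$; hypothesis (ii) is what accomplishes it by producing an element of $K$ at which $D\Psi$ already equals $D\Phi(\tilde u)$, and the strict convexity of $\Psi$ then pins that element down to $\tilde u$ itself. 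No compactness or regularity input is needed at this abstract step, which is the whole point of the restriction-to-$K$ scheme: all nonvariational structure of the supercritical problem is absorbed into the verification of (ii) later on.
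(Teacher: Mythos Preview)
Your proof is correct and follows essentially the same route as the paper: plug $\tilde v$ into the critical-point inequality, use \eqref{v-tilde-weak-formulation} to rewrite $D\Phi(\tilde u)$ as $D\Psi(\tilde v)$, combine with the subgradient inequality for $\Psi$ to force equality, and conclude $\tilde u=\tilde v$. The only cosmetic difference is that the paper unpacks the last step explicitly via the polarization identity $\tfrac12\|\tilde v-\tilde u\|_{\cX^1_0}^2=\Psi(\tilde u)-\Psi(\tilde v)+\langle\tilde v,\tilde v-\tilde u\rangle_{\cX^1_0}$, whereas you invoke strict convexity of the quadratic $\Psi$ directly; your framing of the result as an instance of Theorem~\ref{key-theorem-in-the-variational-methods} with $G\equiv 0$ is exactly what the paper records afterwards in Remark~\ref{remark}.
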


\begin{proof}
	Since by assumption $(i)$ $\tilde{u}$ is a critical point of $I_K$, then by Definition \ref{def3}, we have
	\begin{align*}
	&\frac{1}{2}\int_{\Omega}|\nabla\phi|^2\ dx+\frac{1}{2}\int_{\R^N}\int_{\R^N}\frac{(\phi(x)-\phi(y))^2}{|x-y|^{N+2s}}\ dxdy-\frac{1}{2}\int_{\Omega}|\nabla\tilde{u}|^2\ dx-\frac{1}{2}\int_{\R^N}\int_{\R^N}\frac{(\tilde{u}(x)-\tilde{u}(y))^2}{|x-y|^{N+2s}}\ dxdy\\
	&\geq \langle D\Phi(\tilde{u}),\phi-\tilde{u}\rangle=:\int_{\Omega}D\Phi(\tilde{u})(\phi-\tilde{u})\ dx, \quad\quad\forall\phi\in K.
	\end{align*}
	By taking in particular $\phi=\tilde{v}$, the above inequality becomes
	\begin{align}\label{l1}
\nonumber	&\frac{1}{2}\int_{\Omega}|\nabla\tilde{v}|^2\ dx+\frac{1}{2}\int_{\R^N}\int_{\R^N}\frac{(\tilde{v}(x)-\tilde{v}(y))^2}{|x-y|^{N+2s}}\ dxdy-\frac{1}{2}\int_{\Omega}|\nabla\tilde{u}|^2\ dx-\frac{1}{2}\int_{\R^N}\int_{\R^N}\frac{(\tilde{u}(x)-\tilde{u}(y))^2}{|x-y|^{N+2s}}\ dxdy\\
	&\geq\int_{\Omega}D\Phi(\tilde{u})(\tilde{v}-\tilde{u})\ dx.
	\end{align}
	We now use $\phi=\tilde{v}-\tilde{u}$ as a test function in \eqref{v-tilde-weak-formulation} to get
	\begin{align}\label{l2}
	\int_{\Omega}\nabla\tilde{v}\cdot\nabla(\tilde{v}-\tilde{u})\ dx+\int_{\R^N}\int_{\R^N}\frac{(\tilde{v}(x)-\tilde{v}(y))((\tilde{v}-\tilde{u})(x)-(\tilde{v}-\tilde{u})(y))}{|x-y|^{N+2s}}\ dxdy=\int_{\Omega}D\Phi(\tilde{u})(\tilde{v}-\tilde{u})\ dx.
	\end{align}
	Plugging \eqref{l2} into \eqref{l1}, we get
	\begin{align}\label{l3}
	\nonumber	&\frac{1}{2}\int_{\Omega}|\nabla\tilde{v}|^2\ dx+\frac{1}{2}\int_{\R^N}\int_{\R^N}\frac{(\tilde{v}(x)-\tilde{v}(y))^2}{|x-y|^{N+2s}}\ dxdy-\frac{1}{2}\int_{\Omega}|\nabla\tilde{u}|^2\ dx-\frac{1}{2}\int_{\R^N}\int_{\R^N}\frac{(\tilde{u}(x)-\tilde{u}(y))^2}{|x-y|^{N+2s}}\ dxdy\\
	&\geq\int_{\Omega}\nabla\tilde{v}\cdot\nabla(\tilde{v}-\tilde{u})\ dx+\int_{\R^N}\int_{\R^N}\frac{(\tilde{v}(x)-\tilde{v}(y))((\tilde{v}-\tilde{u})(x)-(\tilde{v}-\tilde{u})(y))}{|x-y|^{N+2s}}\ dxdy.
	\end{align}
	Since $\Psi$ is a differentiable convex function, it follows that\footnote{This is a well-known property of convex functions: the graph of every differentiable convex function lies above all of its tangents.}
	\begin{equation*}
	\Psi(\tilde{u})\geq\Psi(\tilde{v})+D\Psi(\tilde{v})(\tilde{u}-\tilde{v})=\Psi(\tilde{v})+\langle \tilde{v},\tilde{u}-\tilde{v}\rangle_{\cX^1_0}
	\end{equation*}
	i.e.,
	\begin{align}\label{l4}
	\nonumber	&\frac{1}{2}\int_{\Omega}|\nabla\tilde{u}|^2\ dx+\frac{1}{2}\int_{\R^N}\int_{\R^N}\frac{(\tilde{u}(x)-\tilde{u}(y))^2}{|x-y|^{N+2s}}\ dxdy-\frac{1}{2}\int_{\Omega}|\nabla\tilde{v}|^2\ dx-\frac{1}{2}\int_{\R^N}\int_{\R^N}\frac{(\tilde{v}(x)-\tilde{v}(y))^2}{|x-y|^{N+2s}}\ dxdy\\
	&\geq\int_{\Omega}\nabla\tilde{v}\cdot\nabla(\tilde{u}-\tilde{v})\ dx+\int_{\R^N}\int_{\R^N}\frac{(\tilde{v}(x)-\tilde{v}(y))((\tilde{u}-\tilde{v})(x)-(\tilde{u}-\tilde{v})(y))}{|x-y|^{N+2s}}\ dxdy.
	\end{align}
	We now deduce from \eqref{l3} and \eqref{l4} that
	\begin{align}\label{l5}
	\nonumber&\frac{1}{2}\int_{\Omega}|\nabla\tilde{v}|^2\ dx+\frac{1}{2}\int_{\R^N}\int_{\R^N}\frac{(\tilde{v}(x)-\tilde{v}(y))^2}{|x-y|^{N+2s}}\ dxdy-\frac{1}{2}\int_{\Omega}|\nabla\tilde{u}|^2\ dx-\frac{1}{2}\int_{\R^N}\int_{\R^N}\frac{(\tilde{u}(x)-\tilde{u}(y))^2}{|x-y|^{N+2s}}\ dxdy\\
	&=\int_{\Omega}\nabla\tilde{v}\cdot\nabla(\tilde{v}-\tilde{u})\ dx+\int_{\R^N}\int_{\R^N}\frac{(\tilde{v}(x)-\tilde{v}(y))((\tilde{v}-\tilde{u})(x)-(\tilde{v}-\tilde{u})(y))}{|x-y|^{N+2s}}\ dxdy.
	\end{align}
	On the other hand, simple calculation yields
	\begin{align}\label{l6}
	\nonumber&\frac{1}{2}\int_{\Omega}|\nabla(\tilde{v}-\tilde{u})|^2\ dx+\frac{1}{2}\int_{\R^N}\int_{\R^N}\frac{((\tilde{v}-\tilde{u})(x)-(\tilde{v}-\tilde{u})(y))^2}{|x-y|^{N+2s}}\ dxdy\\
	\nonumber&=-\frac{1}{2}\int_{\Omega}|\nabla\tilde{v}|^2\ dx-\frac{1}{2}\int_{\R^N}\int_{\R^N}\frac{(\tilde{v}(x)-\tilde{v}(y))^2}{|x-y|^{N+2s}}\ dxdy+\frac{1}{2}\int_{\Omega}|\nabla\tilde{u}|^2\ dx+\frac{1}{2}\int_{\R^N}\int_{\R^N}\frac{(\tilde{u}(x)-\tilde{u}(y))^2}{|x-y|^{N+2s}} \\
	&+\int_{\Omega}\nabla\tilde{v}\cdot\nabla(\tilde{v}-\tilde{u})\ dx+\int_{\R^N}\int_{\R^N}\frac{(\tilde{v}(x)-\tilde{v}(y))((\tilde{v}-\tilde{u})(x)-(\tilde{v}-\tilde{u})(y))}{|x-y|^{N+2s}}\ dxdy.
	\end{align}
	Therefore, from \eqref{l5} and \eqref{l6} we deduce that
	\begin{equation*}
	\frac{1}{2}\int_{\Omega}|\nabla(\tilde{v}-\tilde{u})|^2\ dx+\frac{1}{2}\int_{\R^N}\int_{\R^N}\frac{((\tilde{v}-\tilde{u})(x)-(\tilde{v}-\tilde{u})(y))^2}{|x-y|^{N+2s}}\ dxdy=0
	\end{equation*}
	which implies that $\tilde{v}=\tilde{u}$ a.e., in $\R^N$. This completes the proof, thanks to \eqref{v-tilde-equation}.
\end{proof}
\begin{remark}\label{remark}
	We emphasize that assumption $(ii)$ of Theorem \ref{t1} says that the triple $(\Psi, K, \Phi)$ satisfies the pointwise invariance condition (at $\tilde{u}$) as in  Theorem \ref{key-theorem-in-the-variational-methods}. Notice that in fact from \eqref{v-tilde-equation}, we identify $G=0$ in the Definition \ref{def}.
\end{remark}

We now wish to establish our existence result stated in Theorems \ref{first-main-result} and \ref{second-main-result}. For this, as stated above, we apply Theorem \ref{key-theorem-in-the-variational-methods}. In order to achieve our goal, we first need to define the appropriate convex set $K$. For our purpose, we define the convex set $K$ as
\begin{equation}\label{convex-set}
K(r):=\{u\in V: \|u\|_{L^{\infty}(\Omega)}\leq r\}
\end{equation}
for some $r>0$ to be determined later. Then we have the following.

\begin{lemma}\label{lm1}
	Fix $r>0$. Then
	\begin{equation*}
	K(r)=\{u\in V: \|u\|_{L^{\infty}(\Omega)}\leq r\}
	\end{equation*}
	is a weakly closed subset of $V$.
\end{lemma}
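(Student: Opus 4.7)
The plan is to apply the standard principle that a convex, strongly closed subset of a Banach space is weakly closed (Mazur's theorem). So I would split the argument into two short steps: verify convexity of $K(r)$, and verify that $K(r)$ is closed in the norm topology of $V$.

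First, convexity follows directly from the triangle inequality in $L^\infty(\Omega)$: if $u,v\in K(r)$ and $\lambda\in[0,1]$, then for a.e.\ $x\in\Omega$,
\begin{equation*}
|\lambda u(x)+(1-\lambda)v(x)|\leq \lambda\|u\|_{L^\infty(\Omega)}+(1-\lambda)\|v\|_{L^\infty(\Omega)}\leq r,
\end{equation*}
so $\lambda u+(1-\lambda)v\in K(r)$.

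For strong closedness, take a sequence $\{u_n\}\subset K(r)$ with $u_n\to u$ in $V$. In particular $\|u_n-u\|_{L^p(\Omega)}\to 0$, and $\Omega$ is bounded so we can extract a subsequence (not relabelled) converging a.e.\ on $\Omega$ to $u$. Since $|u_n(x)|\leq r$ for a.e.\ $x\in\Omega$ and all $n$, passing to the limit yields $|u(x)|\leq r$ a.e., i.e.\ $\|u\|_{L^\infty(\Omega)}\leq r$. Combined with $u\in V$, this gives $u\in K(r)$.

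Finally, since $K(r)$ is convex and strongly closed in the Banach space $V$, Mazur's theorem implies it is closed in the weak topology $\sigma(V,V^*)$. No serious obstacle arises; the only point worth noting is that one cannot argue weak closedness directly from a.e.\ pointwise bounds, because weak convergence in $V$ does not by itself yield pointwise convergence of a subsequence, which is precisely why the reduction through strong closedness plus Mazur is the natural route.
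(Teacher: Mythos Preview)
Your proof is correct, but the paper takes a different route. Rather than invoking Mazur's theorem, the paper argues weak sequential closedness directly: given $u_i\rightharpoonup u$ in $V$, one has in particular $u_i\rightharpoonup u$ in $\cX^1_0$, and the compact embedding $\cX^1_0\hookrightarrow L^2(\Omega)$ then yields (along a subsequence) strong $L^2$-convergence and hence a.e.\ convergence, so the uniform bound $|u_i|\leq r$ passes to the limit.

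Your Mazur-based argument is arguably cleaner and more self-contained: it needs no compact embedding, and it delivers genuine weak closedness rather than only weak \emph{sequential} closedness (the paper's argument, strictly read, gives only the latter, though for convex sets these notions coincide). Conversely, the paper's approach has the virtue of exhibiting explicitly the mechanism---compactness of $\cX^1_0\hookrightarrow L^2(\Omega)$---that is reused repeatedly in the subsequent lemmas. One small remark: your closing comment that weak convergence in $V$ ``does not by itself yield pointwise convergence of a subsequence'' is over-cautious in this specific setting, since the compact embedding does exactly that; this is precisely the observation the paper exploits. It does not, of course, affect the validity of your argument.
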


\begin{proof}
	Let $u_i\in K(r)$ be a sequence such that $u_i\rightharpoonup u$ weakly in $V$. Then in particular, $u_i\rightharpoonup u$ weakly in $\cX^1_0$. Since the embedding $\cX^1_0\hookrightarrow L^2(\Omega)$ is compact (see \cite[Lemma 2.2]{su2022regularity}), then after passing to a subsequence, $u_i\rightarrow u$ strongly in $L^2(\Omega)$ and thus $u_i\rightarrow u$ a.e. in $\Omega$. Finally, since $\|u_i\|_{L^{\infty}(\Omega)}\leq r$, we deduce that $\|u\|_{L^{\infty}(\Omega)}\leq r$ and hence, $u\in K(r)$. This completes the proof.
\end{proof}
We also have the following.

\begin{lemma}\label{lm2}
	Suppose that
	\begin{equation*}
	K=\{u\in K(r): u(x)\geq0~~\text{for almost all}~~x\in\Omega\}.
	\end{equation*}
	Then there exists $\tilde{u}\in K$ such that $I_{K}(\tilde{u})=\inf_{u\in V}I_{K}(u)$.
\end{lemma}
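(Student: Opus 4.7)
The plan is to apply the direct method in the calculus of variations to the functional $I_K$ restricted to $K$. Since $I_K(u) = +\infty$ for $u \notin K$, we have $\inf_{u\in V} I_K(u) = \inf_{u \in K}[\Psi(u)-\Phi(u)]$, so it suffices to find a minimizer inside $K$. First I would check that $I_K$ is bounded below on $K$: because $\|u\|_{L^\infty(\Omega)}\le r$ for all $u\in K$, the bound
\begin{equation*}
\Phi(u)\le \frac{r^p}{p}|\Omega|+\frac{\mu r^q}{q}|\Omega|
\end{equation*}
holds uniformly, while $\Psi(u)\ge 0$. Thus $I_K$ is bounded below on $K$ by $-\tfrac{r^p}{p}|\Omega|-\tfrac{\mu r^q}{q}|\Omega|$ and the infimum $m:=\inf_{K}I_K$ is finite.

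Next I would take a minimizing sequence $\{u_n\}\subset K$ with $I_K(u_n)\to m$. From the upper bound on $\Phi$ and the identity $\Psi(u_n) = I_K(u_n)+\Phi(u_n)$, I obtain a uniform bound on $\|u_n\|_{\cX^1_0}^2=2\Psi(u_n)$. The $L^\infty$ bound gives $\|u_n\|_{L^p(\Omega)}\le r|\Omega|^{1/p}$, so $\{u_n\}$ is bounded in $V=\cX^1_0\cap L^p(\Omega)$. Reflexivity yields a subsequence (still denoted $u_n$) with $u_n\rightharpoonup \tilde{u}$ weakly in $V$. To see that $\tilde{u}\in K$, I invoke Lemma~\ref{lm1} for the $L^\infty$ bound, and observe that the set $\{u\in V:u\ge 0 \text{ a.e.\ in } \Omega\}$ is convex and strongly closed, hence weakly closed; the intersection is therefore weakly closed, so $\tilde{u}\in K$.

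The final step is weak lower semicontinuity of $I_K$ along this sequence. The functional $\Psi$ is convex, continuous and G\^ateaux differentiable on $\cX^1_0$, so by Proposition \ref{lower-semi-continuous-properti} it is weakly lower semicontinuous on $V$. For $\Phi$, the compact embedding $\cX^1_0\hookrightarrow L^2(\Omega)$ gives $u_n\to\tilde u$ strongly in $L^2(\Omega)$ and a.e.\ in $\Omega$ (up to a further subsequence); combined with the uniform bound $|u_n|\le r$, dominated convergence yields $u_n\to\tilde u$ strongly in $L^t(\Omega)$ for every $t\in[1,\infty)$, and in particular in $L^p(\Omega)$ and $L^q(\Omega)$. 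Therefore $\Phi(u_n)\to\Phi(\tilde u)$, and
\begin{equation*}
I_K(\tilde u)=\Psi(\tilde u)-\Phi(\tilde u)\le\liminf_{n\to\infty}\bigl[\Psi(u_n)-\Phi(u_n)\bigr]=m,
\end{equation*}
which forces equality and establishes $\tilde u$ as the desired minimizer.

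The only delicate point is verifying that $\Phi$ passes to the limit (rather than being merely upper or lower semicontinuous): the $L^p$ and $L^q$ convergence of $u_n$ to $\tilde u$ is what makes the argument go through, and it is precisely the $L^\infty$ constraint built into $K$, together with the compact embedding into $L^2$, that supplies this convergence. All other ingredients (coercivity, weak closedness of $K$, lower semicontinuity of $\Psi$) are standard.
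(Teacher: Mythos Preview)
Your proposal is correct and follows essentially the same route as the paper: bound $I_K$ below via the uniform $L^\infty$ constraint, extract a weakly convergent minimizing sequence, upgrade to strong $L^p$ and $L^q$ convergence using the $L^\infty$ bound together with compactness into $L^2$, and conclude via weak lower semicontinuity of $\Psi$. The only cosmetic differences are that the paper obtains the $L^p$ convergence through the explicit estimate $\|u_i-\tilde u\|_{L^p}^p\le (2r)^{p-2}\|u_i-\tilde u\|_{L^2}^2$ rather than dominated convergence, and that you are slightly more careful in verifying the nonnegativity constraint $\tilde u\ge 0$ when checking $\tilde u\in K$.
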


\begin{proof}
	We set $\alpha=\inf_{u\in V}I_K(u)$. The aim is to show that the infimum $\alpha$ is achieved by a function $\tilde{u}\in K$. First of all, recalling the definition of $\Psi_K$, then $I_K=+\infty$ for $u\notin K$. We claim that $\alpha>-\infty$. In fact, for all $u\in K$,
	\begin{align*}
	\Phi(u)=\frac{1}{p}\int_{\Omega}|u|^p\ dx+\frac{\mu}{q}\int_{\Omega}|u|^q\ dx\leq c_1\|u\|^p_{L^{\infty}(\Omega)}+c_2\|u\|^q_{L^{\infty}(\Omega)}\leq c_1r^p+c_2r^q.
	\end{align*}
	Combining this with the fact that $\Psi(u)$ is nonnegative, we get
	\begin{equation*}
	I_K(u)=\Psi_K(u)-\Phi(u)\geq-(c_1r^p+c_2r^q)>-\infty
	\end{equation*} 
	which yields $\alpha>-\infty$. Now, let $\{u_i\}$ be a minimizing sequence of $I_K$ in $K$ i.e., $u_i\in K$ is such that $I_K(u_i)\rightarrow\alpha$. In particular, $\{I_K(u_i)\}$ is bounded and since likewise $\{\Phi(u_i)\}$, we also have that $\{\Psi_K(u_i)\}$ is bounded. Consequently, the sequence $\{u_i\}$ is bounded in $\cX^1_0$. Therefore, after passing to a subsequence, there is $\tilde{u}\in \cX^1_0$ such that $u_i\rightharpoonup \tilde{u}$ weakly in $\cX^1_0$, $u_i\rightarrow \tilde{u}$ strongly in $L^2(\Omega)$ thanks to the compact embedding $\cX^1_0\hookrightarrow L^2(\Omega)$ (see \cite[Lemma 2.2]{su2022regularity}). In particular, $u_i\rightarrow \tilde{u}$ a.e. in $\Omega$. Combining this with the fact that $\|u_i\|_{L^{\infty}(\Omega)}\leq r$, then $\|\tilde{u}\|_{L^{\infty}(\Omega)}\leq r$ and thus $\tilde{u}\in K$.
	
	We claim that $u_i\rightarrow \tilde{u}$ in $L^q(\Omega)$ and  $L^p(\Omega)$ respectively. In fact, since $q<2$, we have
	\begin{equation*}
	u_i\rightarrow \tilde{u}~~~\text{in}~~L^q(\Omega).
	\end{equation*}
	Regarding the $L^p$ convergence, we have
	\begin{align}\label{lp-convergence}
	\nonumber\|u_i-\tilde{u}\|^p_{L^p(\Omega)}&=\int_{\Omega}|u_i-\tilde{u}|^p\ dx=\int_{\Omega}|u_i-\tilde{u}|^{p-2}|u_i-u|^2\ dx\\
	\nonumber&\leq(\|u_i\|_{L^{\infty}(\Omega)}+\|\tilde{u}\|_{L^{\infty}(\Omega)})^{p-2}\|u_i-\tilde{u}\|^2_{L^2(\Omega)}\\
	&\leq(2r)^{p-2}\|u_i-\tilde{u}\|^2_{L^2(\Omega)}.
	\end{align}
	Letting $i\to\infty$ in \eqref{lp-convergence} it follows that $u_i\rightarrow \tilde{u}$ in $L^p(\Omega)$. As a consequence, we get $\Phi(u_i)\rightarrow\Phi(\tilde{u})$ as $i\to\infty$. Now, since by Proposition \ref{lower-semi-continuous-properti} $\Psi_K(\tilde{u})\leq\liminf_{i\to\infty}\Psi_K(u_i)$ we have
	\begin{align*}
	I_K(\tilde{u})=\Psi_K(\tilde{u})-\Phi(\tilde{u})\leq\liminf_{i\to\infty}(\Psi_K(u_i)-\Phi(u_i))=\liminf_{i\to\infty}I_K(u_i)=\alpha.
	\end{align*}
	The proof is therefore finished.
\end{proof}

The following lemma deals with the multiplicity of critical points of $I_K$.

\begin{lemma}\label{lm3}
	Suppose that $K=K(r)$. The functional $I_K$ admits infinitely many distinct critical points.
\end{lemma}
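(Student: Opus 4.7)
The plan is to apply Szulkin's multiplicity theorem (Theorem \ref{multiplicity-result}) to the functional $I_K = \Psi_K - \Phi$ on the convex set $K = K(r)$. First I would verify the structural hypotheses, which are largely free: $\Phi \in C^1(V, \R)$ by standard arguments; $\Psi_K$ is proper, convex, and lower semi-continuous because $\Psi$ has these properties globally on $V$ and $K$ is convex and weakly closed by Lemma \ref{lm1}; both $\Phi$ and $\Psi$ depend only on $|u|$ and on quadratic forms, hence are even; and $I_K(0)=0$. Therefore only two nontrivial tasks remain: verify the Palais--Smale condition \textbf{(PS)}, and exhibit, for each $k\in\N$, a set $A_k \in Z_k$ with $\sup_{A_k} I_K < 0$, while controlling $c_j > -\infty$.

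For the lower bound, if $u \in K$ then $\|u\|_{L^\infty(\Omega)} \leq r$, so
\[
I_K(u) \geq -\Phi(u) \geq -\bigl(c_1 r^p + c_2 r^q\bigr)|\Omega|,
\]
which gives $c_j > -\infty$ for every $j$. For the upper bound $c_k < 0$ I would pick any $k$-dimensional subspace $W_k \subset V \cap L^\infty(\Omega)$ (for instance, spanned by smooth compactly supported functions in $\Omega$). By equivalence of norms on $W_k$ there is $C>0$ with $\|u\|_{L^\infty(\Omega)} \leq C\|u\|_{\cX^1_0}$ and $C^{-1}\|u\|_{\cX^1_0}^q \leq \|u\|_{L^q(\Omega)}^q$ for all $u \in W_k$. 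Then for $u \in W_k$ with $\|u\|_{\cX^1_0} \leq \rho$ and $\rho \leq r/C$ we obtain $u \in K$ and
\[
I_K(u) \leq \tfrac{1}{2}\|u\|_{\cX^1_0}^2 - \tfrac{\mu}{q C}\|u\|_{\cX^1_0}^q,
\]
which is strictly negative provided $\rho$ is chosen small enough, since $q < 2$. Taking $A_k$ to be the symmetric sphere (or ball boundary) of radius $\rho$ in $W_k$, Proposition \ref{genus} gives $\gamma(A_k)=k$, so $A_k \in Z_k$ and $c_k < 0$.

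The main obstacle is the Palais--Smale condition. Let $\{u_i\} \subset V$ satisfy $I_K(u_i)\to c$ and the variational inequality of Definition \ref{def3} with error $\eps_i \to 0$. First, eventually $u_i \in K$ (else $I_K(u_i)=+\infty$), so $\|u_i\|_{L^\infty(\Omega)} \leq r$, which makes $\Phi(u_i)$ uniformly bounded and hence forces $\Psi(u_i) = I_K(u_i)+\Phi(u_i)$ to be uniformly bounded. Thus $\{u_i\}$ is bounded in $\cX^1_0$. Up to a subsequence $u_i \rightharpoonup \tilde u$ weakly in $\cX^1_0$, strongly in $L^2(\Omega)$ by the compact embedding $\cX^1_0 \hookrightarrow L^2(\Omega)$ cited in Lemma \ref{lm2}, and a.e.\ in $\Omega$; the pointwise bound passes to the limit so $\tilde u \in K$. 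The interpolation argument \eqref{lp-convergence} then yields $u_i \to \tilde u$ in $L^p(\Omega)$ and in $L^q(\Omega)$, so $\Phi(u_i)\to \Phi(\tilde u)$ and $\langle D\Phi(u_i), u_i - \tilde u\rangle \to 0$. Testing the variational inequality with $v = \tilde u$,
\[
\Psi_K(u_i) \leq \Psi_K(\tilde u) + \langle D\Phi(u_i),\, u_i - \tilde u\rangle + \eps_i \|u_i - \tilde u\|_V,
\]
gives $\limsup_i \Psi_K(u_i) \leq \Psi_K(\tilde u)$, and combined with weak lower semi-continuity we get $\Psi_K(u_i) \to \Psi_K(\tilde u)$. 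Since $\Psi(u) = \tfrac{1}{2}\|u\|_{\cX^1_0}^2$, convergence of norms together with weak convergence in the Hilbert space $\cX^1_0$ yields strong convergence $u_i \to \tilde u$ in $\cX^1_0$; combined with $L^p$ convergence this is strong convergence in $V$, establishing \textbf{(PS)}.

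With all hypotheses verified, Theorem \ref{multiplicity-result} produces, for every $k$, at least $k$ distinct pairs of nontrivial critical points of $I_K$ in the sense of Definition \ref{def3}. Since $k$ is arbitrary, $I_K$ admits infinitely many distinct critical points, completing the proof.
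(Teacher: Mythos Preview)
Your proposal is correct and follows essentially the same approach as the paper: verify \textbf{(H)}, establish \textbf{(PS)} via boundedness in $K$, weak compactness in $\cX^1_0$, the $L^p$ interpolation \eqref{lp-convergence}, and then upgrade to strong $\cX^1_0$-convergence (you use norm convergence plus weak convergence in the Hilbert space, the paper does an equivalent direct expansion of $\|u_i-\tilde u\|_{\cX^1_0}^2$); finally, both arguments produce $-\infty<c_j<0$ by choosing a small sphere in a $j$-dimensional subspace (the paper uses eigenfunctions of $L$, you use smooth compactly supported functions, which is equally valid and slightly more elementary). The only cosmetic slip is that your displayed upper bound is stated for $\|u\|_{\cX^1_0}\le\rho$ but the strict negativity is only needed, and only holds, on the sphere $\|u\|_{\cX^1_0}=\rho$; since you then take $A_k$ to be that sphere, the argument goes through.
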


\begin{proof}
	First of all, it is clear that $I_K$ satisfies hypothesis \textbf{(H)} since $\Phi$ is continuously differentiable and $\Psi_K$ is a proper, convex, and lower semi-continuous function.
	 
	We now divide the rest of the proof into two steps. In the first step, we show that $I_K$ satisfies the \textbf{(PS)} condition and in the second, we establish the multiplicity result.\\
	
	\textbf{Step 1.} We show that $I_K$ satisfies the \textbf{(PS)} condition. Let $\{u_i\}$ be a sequence such that $I_K(u_i)\rightarrow c\in\R$ and
	\begin{align}\label{ps1}
	\langle D\Phi(u_i), u_i-v\rangle+\Psi_K(v)-\Psi_K(u_i)\geq-\varepsilon_i\|u_i-v\|_{V}, ~~\forall v\in V. 
	\end{align}
	Since $I_K(u_i)\rightarrow c$, then the sequence $\{I_K(u_i)\}$ is bounded. Since $\{\Phi(u_i)\}$ is also bounded, then $\{\Psi_K(u_i)\}$ is bounded. In particular, we have that $\{u_i\}$ is a bounded sequence in $\cX^1_0$. Hence, after passing to a subsequence, there exists $\tilde{u}\in\cX^1_0$ such that $u_i\rightharpoonup \tilde{u}$ weakly in $\cX^1_0$ and thanks to the compact embedding $\cX^1_0\hookrightarrow L^2(\Omega)$ (see \cite[Lemma 2.2]{su2022regularity}) we have $u_i\rightarrow\tilde{u}$ strongly in $L^2(\Omega)$. In particular, $u_i\rightarrow\tilde{u}$ a.e. in $\Omega$. As in the proof of Lemma \ref{lm2}, we have $\tilde{u}\in K$. Also, $u_i\rightarrow\tilde{u}$ in $L^p(\Omega)$ as $i\rightarrow\infty$. 
	
	Now,
	\begin{align}\label{ps2}
	\|u_i-\tilde{u}\|^2_{\cX^1_0}=\langle u_i-\tilde{u}, u_i-\tilde{u}\rangle_{\cX^1_0}=\langle u_i-\tilde{u}, u_i\rangle_{\cX^1_0}-\langle u_i-\tilde{u}, \tilde{u}\rangle_{\cX^1_0}.
	\end{align}
	Since $u_i\rightharpoonup \tilde{u}$ weakly in $\cX^1_0$ then
	\begin{equation}\label{ps3}
	\langle u_i-\tilde{u}, \tilde{u}\rangle_{\cX^1_0}\rightarrow0\quad\text{as}~~i\rightarrow\infty.
	\end{equation}
	On the other hand,
	\begin{align}\label{ps4}
	\nonumber\langle u_i-\tilde{u}, u_i\rangle_{\cX^1_0}&=\langle u_i, u_i\rangle_{\cX^1_0}-\langle\tilde{u}, u_i\rangle_{\cX^1_0}=2\Psi(u_i)-\langle\tilde{u}, u_i\rangle_{\cX^1_0}\\
	\nonumber&=2(\Psi(u_i)-\Psi(\tilde{u})-\langle D\Phi(u_i), u_i-\tilde{u}\rangle)\\
	&+2\Psi(\tilde{u})+2\langle D\Phi(u_i), u_i-\tilde{u}\rangle-\langle\tilde{u}, u_i\rangle_{\cX^1_0}. 
	\end{align}  
	Since $\{u_i-\tilde{u}\}$ is bounded in $V$, then it follows from \eqref{ps1} that
	\begin{equation}\label{ps5}
	\limsup_{i\rightarrow\infty}(\Psi(u_i)-\Psi(\tilde{u})-\langle D\Phi(u_i), u_i-\tilde{u}\rangle)\leq0.
	\end{equation}
	Moreover, using again that $u_i\rightharpoonup\tilde{u}$ weakly in $\cX^1_0$, then
	\begin{align}\label{ps6}
	2\Psi(\tilde{u})-\langle\tilde{u}, u_i\rangle_{\cX^1_0}=\langle\tilde{u}, \tilde{u}\rangle_{\cX^1_0}-\langle\tilde{u}, u_i\rangle_{\cX^1_0}=\langle\tilde{u}, \tilde{u}-u_i\rangle_{\cX^1_0}\rightarrow0\quad\text{as}~~i\rightarrow\infty.
	\end{align}
	Next, using that $u_i, \tilde{u}\in K$ then
	\begin{align}\label{ps7}
\nonumber|\langle D\Phi(u_i), u_i-\tilde{u}\rangle|&\leq\int_{\Omega}|u_i|^{p-1}|u_i-\tilde{u}|\ dx+\mu\int_{\Omega}|u_i|^{q-1}|u_i-\tilde{u}|\ dx\\
	&\leq|\Omega|^{\frac{1}{2}}(r^{p-1}+r^{q-1})\|u_i-\tilde{u}\|_{L^2(\Omega)}\rightarrow0\quad\quad\text{as}~~i\rightarrow\infty.
	\end{align}
	From \eqref{ps7}, \eqref{ps6} and \eqref{ps5}, it follows from \eqref{ps4} that	
  \begin{align}\label{ps8}
   \limsup_{i\rightarrow\infty}\langle u_i-\tilde{u}, u_i\rangle_{\cX^1_0}\leq0.
  \end{align}
	Combining \eqref{ps8} and \eqref{ps2} and recalling \eqref{ps1}, we deduce that $u_i\rightarrow \tilde{u}$ strongly in $\cX^1_0$. Hence, the functional $I_K$ satisfies the \textbf{(PS)} condition.\\
	
	\textbf{Step 2.} We prove the multiplicity result. For any $j\in\N$, let 
	\begin{equation*}
	c_j=\inf_{A\in Z_j}\sup_{u\in A}I(u).
	\end{equation*}
	Here, $Z_j$ is as in \eqref{a1}. In this step, we wish to apply Theorem \ref{multiplicity-result}. For that, we shall show that $c_j$ satisfies $-\infty<c_j<0$ for every $j\in\N$.
	Let $\lambda_j$ be the $j$-th eigenvalue of $-\Delta+(-\Delta)^s$ in $\cX^1_0$ (counted with its multiplicity) and consider $\xi_j$ an eigenfunction associated to $\lambda_j$. Then $\{\xi_j\}_{j\in\N}$ form an orthonormal (resp. orthogonal) basis of $L^2(\Omega)$ (resp. $\cX^1_0$) (see \cite[Proposition 2.4]{maione2022variational}). 
	
	Arguing as in Lemma \ref{lm2}, we have that $I_K$ is bounded from below. Hence, $c_j>-\infty$ for every $j\in\N$. It thus remains to prove that $c_j<0$. To this end, we set
	\begin{align*}
	A=\{u=\beta_1\xi_1+\cdots+\beta_j\xi_j: \|u\|^2_{\cX^1_0}=\beta_1^2+\cdots+\beta_j^2=\rho^2\}
	\end{align*}
	 for some $\rho>0$ to be determined later. From Proposition \ref{genus}, $\gamma(A)=j$. Therefore, $A\in Z_j$. Since $A$ is a subset of a finite dimensional space and that norms are equivalent in finite dimensional space, then we choose $\rho$ sufficiently small such that $A\subseteq K(r)$. Moreover, one can find $a_1, a_2>0$ such that $\|u\|_{L^p(\Omega)}> a_1\|u\|_{\cX^1_0}$ and $\|u\|_{L^q(\Omega)}> a_2\|u\|_{\cX^1_0}$ for all $u\in A$. Thus,
	 \begin{align*}
	 I_K(u)&=\frac{1}{2}\|u\|^2_{\cX^1_0}-\frac{1}{p}\|u\|^p_{L^p(\Omega)}-\frac{\mu}{q}\|u\|^q_{L^q(\Omega)}\\
	 &<\frac{1}{2}\rho^2-\frac{1}{p}a_1^p\rho^p-\frac{\mu}{q}a_2^q\rho^q=\rho^q\Big(\frac{1}{2}\rho^{2-q}-\frac{1}{p}a^p_1\rho^{p-q}-\frac{\mu}{q}a^q_2\Big).
	 \end{align*}
	 We now choose $\rho$ sufficiently small such that $I_K(u)<\rho^q\Big(\frac{1}{2}\rho^{2-q}-\frac{1}{p}a^p_1\rho^{p-q}-\frac{\mu}{q}a^q_2\Big)<0$ for any $u\in A$. This implies that $c_j<0$ for any $j\in\N$. Hence, by Theorem \ref{multiplicity-result}, we deduce that $I_K$ admits a sequence of dinstinct critical points $\{u_i\}_{i\in\N}$ in the sense of Definition \ref{def3}. The proof is therefore finished by  \textbf{Steps 1} and \textbf{2}.
\end{proof}

The next lemmas show that pointwise assumption $(ii)$ of Theorem \ref{t1} is satisfied in our context.

\begin{lemma}\label{lm4}
	Let $\Omega$ be a bounded domain in $\R^N$ and assume that $1<q<2<p$. Then
	\begin{equation*}
	\|D\Phi(u)\|_{L^{\infty}(\Omega)}\leq r^{p-1}+\mu r^{q-1},\quad\quad\forall u\in K(r).
	\end{equation*}
\end{lemma}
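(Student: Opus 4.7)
The plan is to compute $D\Phi(u)$ pointwise and then estimate it using the defining bound of $K(r)$. Recall that $\Phi(u)=\frac{1}{p}\int_{\Omega}|u|^p\,dx+\frac{\mu}{q}\int_{\Omega}|u|^q\,dx$, so by a direct differentiation (both integrands are $C^1$ in $u$ since $p,q>1$), the G\^{a}teaux derivative is represented by the $L^\infty$ function
\[
D\Phi(u)(x)=|u(x)|^{p-2}u(x)+\mu |u(x)|^{q-2}u(x),\qquad x\in\Omega.
\]

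Next, I would fix $u\in K(r)$, which by definition means $|u(x)|\le r$ for a.e. $x\in\Omega$. Since $p>2>q>1$, the functions $t\mapsto |t|^{p-1}$ and $t\mapsto |t|^{q-1}$ are increasing in $|t|$, hence
\[
\bigl||u(x)|^{p-2}u(x)\bigr|=|u(x)|^{p-1}\le r^{p-1},\qquad \bigl||u(x)|^{q-2}u(x)\bigr|=|u(x)|^{q-1}\le r^{q-1},
\]
for a.e.\ $x\in\Omega$. (Note the exponents $p-1>1$ and $q-1>0$ are positive, so no singularity arises from $u$ vanishing.)

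Finally, applying the triangle inequality pointwise and taking the essential supremum yields
\[
|D\Phi(u)(x)|\le r^{p-1}+\mu r^{q-1}\quad\text{for a.e.\ }x\in\Omega,
\]
which gives $\|D\Phi(u)\|_{L^\infty(\Omega)}\le r^{p-1}+\mu r^{q-1}$ as claimed. There is no real obstacle here; the statement is a direct a priori bound on the Nemytskii operator $u\mapsto |u|^{p-2}u+\mu|u|^{q-2}u$ restricted to the $L^\infty$-ball $K(r)$. The value of the lemma is that it exhibits $D\Phi(u)$ as an $L^\infty$ (hence $L^{p'}$ for every $p'>N/2$) right-hand side of an equation of the form $Lv=D\Phi(u)$, which prepares the ground for applying Lemma \ref{l-infty-estimate} in the subsequent verification of the pointwise invariance condition (assumption $(ii)$ of Theorem \ref{t1}).
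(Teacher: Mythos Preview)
Your proof is correct and follows essentially the same approach as the paper: identify $D\Phi(u)=|u|^{p-2}u+\mu|u|^{q-2}u$, bound each term pointwise using $\|u\|_{L^\infty(\Omega)}\le r$, and apply the triangle inequality. Your version is slightly more detailed (making the pointwise monotonicity explicit and noting the absence of singularities), but the argument is the same.
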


\begin{proof}
	The proof of this lemma is straightforward. We give details for the sake of completeness. Let $u\in K(r)$. We have
	\begin{align*}
	\|D\Phi(u)\|_{L^{\infty}(\Omega)}&=\|u|u|^{p-2}+\mu u|u|^{p-2}\|_{L^{\infty}(\Omega)}\\
	&\leq\|u\|^{p-1}_{L^{\infty}(\Omega)}+\mu\|u\|^{q-1}_{L^{\infty}(\Omega)}\\
	&\leq r^{p-1}+\mu r^{q-1}.
	\end{align*}
\end{proof}

\begin{lemma}\label{lm5}
	If $1<q<2<p$, there exists $\mu_\#>0$ such that for every $\mu\in(0,\mu_\#)$, there exist $r_1, r_2\in\R$ with $r_1<r_2$ such that for every $r\in [r_1, r_2]$ and every $\tilde{u}\in K(r)$, the equation
	\begin{equation}
	-\Delta v+(-\Delta)^sv=|\tilde{u}|^{p-2}\tilde{u}+\mu|\tilde{u}|^{q-2}\tilde{u}
	\end{equation}
	admits a weak solution $v\in K(r)$. 
\end{lemma}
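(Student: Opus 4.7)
The plan is to construct $v$ as the unique weak solution in $\cX^1_0$ of the linear Dirichlet problem $Lv = D\Phi(\tilde u)$, invoke the $L^\infty$-estimate of Lemma \ref{l-infty-estimate} to bound $\|v\|_{L^\infty(\R^N)}$ quantitatively in terms of $r$ and $\mu$, and then choose $\mu_\#$ and an interval $[r_1, r_2]$ so that this bound does not exceed $r$.

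Fix $\tilde u \in K(r)$ and set $f := |\tilde u|^{p-2}\tilde u + \mu |\tilde u|^{q-2}\tilde u$. By Lemma \ref{lm4}, $\|f\|_{L^\infty(\Omega)} \le r^{p-1} + \mu r^{q-1}$, so in particular $f \in L^{p_0}(\Omega)$ for any fixed $p_0 > N/2$. The bilinear form $\langle\cdot,\cdot\rangle_{\cX^1_0}$ is continuous and coercive on the Hilbert space $\cX^1_0$, so Lax--Milgram produces a unique weak solution $v \in \cX^1_0$ of $Lv = f$ in $\Omega$ with $v = 0$ on $\R^N \setminus \Omega$. Combining Lemma \ref{l-infty-estimate} with the elementary bound $\|f\|_{L^{p_0}(\Omega)} \le |\Omega|^{1/p_0}\|f\|_{L^\infty(\Omega)}$ yields a constant $C_1 > 0$, depending only on $N$, $p_0$, $s$ and $\Omega$, such that
$$\|v\|_{L^\infty(\R^N)} \le C_1\bigl(r^{p-1} + \mu r^{q-1}\bigr).$$

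To force $v \in K(r)$ it therefore suffices to restrict to pairs $(\mu, r)$ for which $h_\mu(r) := C_1 r^{p-2} + C_1 \mu r^{q-2} \le 1$. Since $q < 2 < p$, $h_\mu$ is smooth on $(0,\infty)$ and diverges to $+\infty$ at both endpoints, so it attains a unique positive minimum at $r_\mu = \bigl(\tfrac{\mu(2-q)}{p-2}\bigr)^{1/(p-q)}$, and a direct computation gives
$$\min_{r > 0} h_\mu(r) = \kappa\, \mu^{(p-2)/(p-q)}$$
for an explicit constant $\kappa = \kappa(p, q, N, s, \Omega) > 0$. Since $(p-2)/(p-q) > 0$, this minimum tends to $0$ as $\mu \to 0^+$; hence there exists $\mu_\# > 0$ with $\min_{r > 0} h_\mu(r) < 1$ for every $\mu \in (0, \mu_\#)$. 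By continuity of $h_\mu$, the sublevel set $\{r > 0 : h_\mu(r) \le 1\}$ then contains a nondegenerate interval $[r_1, r_2]$ (depending on $\mu$), and for every $r$ in that interval the $L^\infty$-estimate above gives $\|v\|_{L^\infty(\R^N)} \le r$, i.e., $v \in K(r)$.

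I do not anticipate a serious obstacle. The only delicate bookkeeping is checking that the constant $C_1$ provided by Lemma \ref{l-infty-estimate} is uniform in the data $\tilde u$, $r$ and $\mu$; this is immediate because the conclusion of Lemma \ref{l-infty-estimate} depends only on $L$, on $\Omega$, and on the $L^{p_0}$-norm of the right-hand side. The remaining ingredients, namely solvability of the linear Dirichlet problem for $L = -\Delta + (-\Delta)^s$ and the one-variable analysis of $h_\mu$, are classical.
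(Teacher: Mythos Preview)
Your proof is correct and follows essentially the same approach as the paper: obtain a weak solution $v$ of the linear problem, apply the $L^\infty$-estimate of Lemma~\ref{l-infty-estimate} together with Lemma~\ref{lm4} to get $\|v\|_{L^\infty}\le C(r^{p-1}+\mu r^{q-1})$, and then study the function $h_\mu(r)=C(r^{p-2}+\mu r^{q-2})$ to find the admissible interval $[r_1,r_2]$. Your write-up is in fact a bit more careful than the paper's (you separate existence via Lax--Milgram from the a~priori bound, and you carry out the minimization of $h_\mu$ explicitly rather than just invoking ``the variation of $h$''), but the argument is the same.
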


\begin{proof}
	Let $\tilde{u}\in K(r)$. Then $\|\tilde{u}\|_{L^{\infty}(\Omega)}<r$ and thus $\tilde{u}\in L^{\infty}(\Omega)$. Hence, by Lemma \ref{l-infty-estimate}, there exists $v\in\cX^1_0$ satisfying in the weak sense the equation
	\begin{equation*}
	-\Delta v+(-\Delta)^sv=|\tilde{u}|^{p-2}\tilde{u}+\mu |\tilde{u}|^{q-2}\tilde{u}.
	\end{equation*}
	Moreover, $v\in L^{\infty}(\Omega)$ and there exists a positive constant $C>0$ such that
	\begin{align}\label{a2}
	\nonumber\|v\|_{L^{\infty}(\Omega)}&\leq C\||\tilde{u}|^{p-2}\tilde{u}+\mu|\tilde{u}|^{q-2}\tilde{u}\|_{L^{\infty}(\Omega)}\\
	&=C\|D\Phi(\tilde{u})\|_{L^{\infty}(\Omega)}\leq C(r^{p-1}+\mu r^{q-1}).
	\end{align}
	Note that in \eqref{a2} we used Lemma \ref{lm4}. Therefore $v\in K(r)$ if and only if
	\begin{equation}
	C(r^{p-1}+\mu r^{q-1})\leq r\quad i.e.,~~ C(r^{p-2}+\mu r^{q-2})\leq 1.
	\end{equation}
	By studying the variation of the function $h(r)=C(r^{p-2}+\mu r^{q-2})-1$, it follows that there exists $\mu_\#>0$ such that for every $\mu\in(0,\mu_\#)$, there exist $r_1<r_2$ such that for all $r\in [r_1, r_2]$, we have $h(r)\leq0$. This completes the proof.  
\end{proof}

Having the above results in mind, we are now in position to prove Theorem \ref{first-main-result}.

\begin{proof}[Proof Theorem \ref{first-main-result}]
	Let $r_1, r_2$ and $\mu_\#$ be as in Lemma \ref{lm5}. Define
	\begin{equation}
	K:=\{u\in K(r): u(x)\geq0~~\text{a.e.}~~x\in\Omega\}
	\end{equation}
	for some $r\in(r_1, r_2)$. By Lemma \ref{lm2}, there exists $\tilde{u}\in K$ such that $I_K(\tilde{u})=\inf_{u\in V}I_K(u)$. Moreover, $\tilde{u}$ is a critical point of $I_K$ in the sense of Definition \ref{def3}. 
	In fact, since $\tilde{u}$ is a minimizer of $I_K$ in $V$, then
	\begin{align*}
	I_K(\tilde{u})\leq I_K((1-t)\tilde{u}+tv)\quad\quad \forall v\in V
	\end{align*} 
	for all sufficiently small $t>0$.
	Recalling that $I_K=\Psi_K-\Phi$, then from the inequality above and using that $\Psi_K$ is convex, we have
	\begin{align*}
	0\leq I_K((1-t)\tilde{u}+tv)-I_K(\tilde{u})&=\Phi(\tilde{u})-\Phi((1-t)\tilde{u}+tv)+\Psi_K((1-t)\tilde{u}+tv)-\Psi_K(\tilde{u})\\
	&\leq \Phi(\tilde{u})-\Phi(\tilde{u}+t(v-\tilde{u}))+t(\Psi_K(v)-\Psi_K(\tilde{u})).
	\end{align*}
	The identity \eqref{critical-point} then follows by dividing the inequality above by $t$ and letting $t\rightarrow0$.
	
	On the other hand, by Lemma \ref{lm5}, there exists $v\in K(r)$ solving in weak sense the equation $-\Delta v+(-\Delta)^sv=|\tilde{u}|^{p-2}\tilde{u}+\mu|\tilde{u}|^{q-2}\tilde{u}$. Moreover, since $\tilde{u}\in K$ (and thus $\tilde{u}(x)\geq0$ a.e. $x\in\Omega$) then $-\Delta v+(-\Delta)^sv=|\tilde{u}|^{p-2}\tilde{u}+\mu|\tilde{u}|^{q-2}\tilde{u}\geq0$. Hence, by Proposition \ref{maximum-principle}, we deduce that $v\geq0$ a.e. in $\Omega$ and therefore $v\in K$. By Theorem \ref{t1} we also deduce that $\tilde{u}$ is a weak solution of \eqref{e2} i.e.,

   \begin{equation}\label{u-tilde-weak-formulation}
		\int_{\Omega}\nabla\tilde{u}\cdot\nabla\phi\ dx+\int_{\R^N}\int_{\R^N}\frac{(\tilde{u}(x)-\tilde{u}(y))(\phi(x)-\phi(y))}{|x-y|^{N+2s}}\ dxdy=\int_{\Omega}(|\Tilde{u}|^{p-2}\tilde{u}+\mu |\Tilde{u}|^{q-2}\Tilde{u})\phi\ dx, \quad\forall\phi\in V.
		\end{equation}
 To complete the proof, it remains to show that $I_K(\tilde{u})=\inf_{u\in V}I_{}(u)<0$ and $\tilde{u}>0$. Let us start by proving that $I_K(\tilde{u})=\inf_{u\in V}I_{K}(u)<0$. For this, let $u_0\in K$. Then for all $t\in[0,1]$, $tu_0\in K$. Now
	\begin{align*}
	&I_{K}(tu_0)\\
	&=\frac{1}{2}\int_{\Omega}|\nabla(tu_0)|^2\ dx+\frac{1}{2}\int_{\R^N}\int_{\R^N}\frac{(tu_0(x)-tu_0(y))^2}{|x-y|^{N+2s}}\ dxdy-\frac{1}{p}\int_{\Omega}|tu_0|^p\ dx-\frac{\mu}{q}\int_{\Omega}|tu_0|^q\ dx\\
	&=t^q\Bigg(\frac{t^{2-q}}{2}\int_{\Omega}|\nabla u_0|^2\ dx+\frac{t^{2-q}}{2}\int_{\R^N}\int_{\R^N}\frac{(u_0(x)-u_0(y))^2}{|x-y|^{N+2s}}\ dxdy-\frac{t^{p-q}}{p}\int_{\Omega}|u_0|^p\ dx-\frac{\mu}{q}\int_{\Omega}|u_0|^q\ dx\Bigg).
	\end{align*}
	For $t$ sufficiently small, we have from the above equality that $I_K(tu_0)<0$ and thus $I_K(\tilde{u})=\inf_{u\in V} I_K(u)<0$. Notice that a direct consequence of this is that $\tilde{u}$ is a nontrivial solution. 
	
	Now we show that $\tilde{u}>0$. First of all, we know that $\tilde{u}\geq0$ in $\R^N$. We now argue by contradiction. Assume that there exists $x_0\in\Omega$ and $R>0$ such that
   \begin{equation}\label{x}
       \tilde{u}\equiv0 \quad\text{a.e. on}~~B_R(x_0)\subset\subset\Omega.
   \end{equation}
Let $\phi\in C^{\infty}_c(\Omega)$ be a non-negative function with the properties
\begin{equation*}
     \supp(\phi)\subseteq B_R(x_0) \quad\quad \text{and}\quad\quad \int_{B_R(x_0)}\phi\ dx=1.
 \end{equation*}
Using $\phi$ as a test function in \eqref{u-tilde-weak-formulation} and from \eqref{x}, we have
\begin{align}\label{x1}
 \nonumber 0=\int_{B_R(x_0)}(|\Tilde{u}|^{p-2}\tilde{u}&+\mu |\Tilde{u}|^{q-2}\Tilde{u})\phi\ dx=\int_{\Omega}(|\Tilde{u}|^{p-2}\tilde{u}+\mu |\Tilde{u}|^{q-2}\Tilde{u})\phi\ dx\\
 \nonumber &=\int_{\Omega}\nabla\tilde{u}\cdot\nabla\phi\ dx+\int_{\R^N}\int_{\R^N}\frac{(\tilde{u}(x)-\tilde{u}(y))(\phi(x)-\phi(y))}{|x-y|^{N+2s}}\ dxdy\\
 \nonumber &=\int_{\R^N}\int_{\R^N}\frac{(\tilde{u}(x)-\tilde{u}(y))(\phi(x)-\phi(y))}{|x-y|^{N+2s}}\ dxdy\\
 \nonumber &=-2\int_{\Omega\setminus B_R(x_0)}\int_{B_R(x_0)}\frac{\tilde {u}(y)\phi(x)}{|x-y|^{N+2s}}\ dxdy\\
    &\leq-\frac{2}{\diam(\Omega)^{N+2s}}\int_{\Omega\setminus B_R(x_0)}\tilde{u}(y)\ dy.
\end{align}
 Recalling that $\tilde{u}\geq0$ in $\R^N$, we deduce from \eqref{x1} that $\tilde{u}\equiv0$ a.e. in $\Omega\setminus B_R(x_0)$ and thus, owing to \eqref{x}, we have $\Tilde{u}\equiv0$ a.e. in $\Omega$. This contradicts the fact that $\tilde{u}$ is  a nontrivial solution. Hence, $\tilde{u}>0$ in $\Omega$, as desired.
\end{proof}

Next, we give the proof of Theorem \ref{second-main-result}.

\begin{proof}[Proof of Theorem \ref{second-main-result}]
	This is a direct consequence of Lemmas \ref{lm5}, \ref{lm3} and Theorem \ref{t1}. In fact, consider $\mu_\#, r_1$ and $r_2$ be as in Lemma \ref{lm5}. Take $K=K(r_2)$. Then from Lemma \ref{lm3}. the functional $I_K$ possesses infinitely many distinct critical points. Moreover, again by Lemma \ref{lm5}, for any critical point $u_i$ of $I_K$, there exists $v_i\in K$ satisfying $-\Delta v_i+(-\Delta)^sv_i=|u_i|^{p-2}u_i+\mu |u_i|^{q-2}u_i$. Now, thanks to Theorem \ref{t1}, $\{u_i\}_{i\in\N}$ is a sequence of distinct solutions of \eqref{e2} such that $I_K(u_i)<0$ for all $i\in\N$.
\end{proof}
We now wish to prove Theorem \ref{third-main-result}. Before, we set
\begin{equation}
\cK=\{u\in K(r): u(\sigma_j(x))=-u(x)~~~\text{for all}~~x\in\Omega\}
\end{equation}
where $K(r)$ is defined as in \eqref{convex-set} and $\sigma_j$ the reflection defined in \eqref{reflection}. Also, $V$ and $I_{\cK}$ are defined similarly as above. We now give the proof of Theorem \ref{third-main-result}. 

\begin{proof}[Proof of Theorem \ref{third-main-result}]
	It suffices to prove that assumptions $(i)$ and $(ii)$ of Theorem \ref{t1} are fulfilled.\\
	
	\textbf{Step 1.} Proof of assumption $(i)$. We first observe that since $\cK\subset K(r)$, then $\inf_{u\in V}I_{\cK}(u)\geq\inf_{u\in V}I_{K(r)}(u)>-\infty$. Thus, one can choose a sequence $\{u_i\}$ in $\cK$ such that $I_{\cK}(u_i)\rightarrow\inf_{u\in V}I_{\cK}(u)$. In particular, $\{I_{\cK}(u_i)\}$ is bounded and since $\{\Phi(u_i)\}$ is also bounded, then one has that $\{\Psi_{\cK}(u_i)\}$ is bounded as well. This means that $\{u_i\}$ is a bounded sequence in $\cX^1_0$. Therefore, up to a subsequence, there exists $\tilde{u}\in \cX^1_0$ such that $u_i\rightharpoonup\tilde{u}$ weakly in $\cX^1_0$ and $u_i\rightarrow\tilde{u}$ strongly in $L^2(\Omega)$ (thanks to the compact embedding $\cX^1_0\hookrightarrow L^2(\Omega)$, see \cite[Lemma 2.2]{su2022regularity}). We also have in particular that $u_i\rightarrow\tilde{u}$ a.e. in $\Omega$. Moreover, since $\cK$ is weakly closed in $V$, then $\tilde{u}\in \cK$. Now, using the fact that $\Phi$ is continuous and $\Psi_{\cK}$ lower semi-continuous, we find that $I_{\cK}(\tilde{u})\leq\inf_{u\in V}I_{\cK}(u)$. This means that $\tilde{u}$ is a minimizer (and thus a critical point) of $I_{\cK}$ in $V$.\\
	
	\textbf{Step 2.} Proof of assumption $(ii)$. Since $\tilde{u}\in K(r)$, then by Lemma \ref{lm5}, there exists $v\in K(r)$ satisfying in the weak sense the equation
	\begin{equation}\label{c}
	-\Delta v+(-\Delta)^sv=|\tilde{u}|^{p-2}\tilde{u}+\mu|\tilde{u}|^{q-2}\tilde{u}.
	\end{equation}
	To complete the proof, we have to show that $v\in \cK$. Set $w(x)=v(\sigma_j(x))+v(x)$ for all $x\in\Omega$. Then from \eqref{c} and recalling that $\tilde{u}(\sigma_j(x))=-\tilde{u}(x)$, we have
	\begin{align*}
	-\Delta w(x)+(-\Delta)^sw(x)&=-\Delta v(\sigma_j(x))+(-\Delta)^sv(\sigma_j(x))+-\Delta v(x)+(-\Delta)^sv(x)\\
	 &=|\tilde{u}(\sigma_j(x))|^{p-2}\tilde{u}(\sigma_j(x))+\mu|\tilde{u}(\sigma_j(x))|^{q-2}\tilde{u}(\sigma_j(x))\\
	 &~~~~~+|\tilde{u}(x)|^{p-2}\tilde{u}(x)+\mu|\tilde{u}(x)|^{q-2}\tilde{u}(x)\\
	 &=0.
	\end{align*}
	Moreover, since also $w(x)=0$ for all $x\in\R^N\setminus\Omega$, then from maximum principle (see Proposition \ref{maximum-principle}) we deduce that $w(x)=0$ i.e., $v(\sigma_j(x))=-v(x)$ for all $x\in\Omega$. Consequently, $v\in \cK$, as desired. 
	We finally notice that arguing as in the proof of Theorem \ref{first-main-result}, we have that $\tilde{u}$ is a nontrivial solution of \eqref{e2}. 
\end{proof}

We wish now to prove Theorem \ref{non-existence-result}. To this end, we need some preliminary lemmas. We start with the following standard result which says one can find a solution between sub and super solutions. It reads as follows.

\begin{lemma}\label{caparison-principle}
	Assume that there exists a subsolution $u_1$ and a supersolution $u_2$ to problem \eqref{e2} with $u_1\leq u_2$. Then there also exists a solution $\tilde{u}$ with $u_1\leq\tilde{u}\leq u_2$.
\end{lemma}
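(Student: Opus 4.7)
The plan is to use the classical sub- and super-solution method adapted to the mixed local-nonlocal setting: truncate the nonlinearity so that it becomes bounded on all of $\R$, solve the truncated problem variationally as a genuine minimization problem, and then show via a Kato-type maximum-principle argument that the minimizer actually lies between $u_1$ and $u_2$, so that it solves the original equation.

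First, I would set $f(t) = |t|^{p-2}t + \mu |t|^{q-2}t$ and define the truncation operator $T(x,t) = \max\{u_1(x), \min\{t, u_2(x)\}\}$, and then $\tilde f(x,t) := f(T(x,t))$. Since $u_1, u_2 \in L^{\infty}(\R^N)$ (which we get from Lemma \ref{l-infty-estimate} together with the sub/super-solution hypothesis), the function $\tilde f$ is uniformly bounded in both variables. The corresponding truncated energy
\begin{equation*}
J(u) = \tfrac{1}{2}\|u\|_{\cX^1_0}^2 - \int_{\Omega} F(x,u)\,dx, \qquad F(x,u) = \int_0^u \tilde f(x,\tau)\,d\tau,
\end{equation*}
is then coercive on $\cX^1_0$ because $|F(x,u)| \leq C|u|$, and it is sequentially weakly lower semi-continuous thanks to the compact embedding $\cX^1_0 \hookrightarrow L^2(\Omega)$ (from \cite[Lemma 2.2]{su2022regularity}). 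Hence $J$ admits a global minimizer $\tilde u \in \cX^1_0$, which weakly satisfies $L\tilde u = \tilde f(x,\tilde u)$ in $\Omega$ with $\tilde u = 0$ outside $\Omega$.

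Next I would verify the two-sided bound $u_1 \leq \tilde u \leq u_2$. For the upper bound, the natural test function is $w := (\tilde u - u_2)^+ \in \cX^1_0$ (it vanishes outside $\Omega$ because $\tilde u \equiv 0$ there and the Dirichlet condition forces $u_2 \geq 0$ on $\R^N \setminus \Omega$). Subtracting the weak formulations of the equations for $\tilde u$ and $u_2$ and testing with $w$, the local part contributes $\int_\Omega |\nabla w|^2\,dx$, while the nonlocal part is bounded below by the standard pointwise inequality
\begin{equation*}
\bigl((\tilde u - u_2)(x) - (\tilde u - u_2)(y)\bigr)\bigl(w(x) - w(y)\bigr) \;\geq\; \bigl(w(x) - w(y)\bigr)^2,
\end{equation*}
so the left-hand side dominates $\|w\|_{\cX^1_0}^2$. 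On the set $\{w > 0\}$ one has $T(\tilde u) = u_2$, hence $\tilde f(x,\tilde u) - f(u_2) = 0$, so the right-hand side is $\leq 0$. This forces $w \equiv 0$, i.e., $\tilde u \leq u_2$ a.e. The lower bound $\tilde u \geq u_1$ is obtained symmetrically, testing with $(u_1 - \tilde u)^+$. Since $u_1 \leq \tilde u \leq u_2$ a.e., the truncation is inactive, $\tilde f(x,\tilde u) = f(\tilde u)$, and $\tilde u$ solves \eqref{e2}.

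The main technical obstacle I anticipate is checking admissibility of the test functions and handling the nonlocal comparison cleanly: one has to make sure $(\tilde u - u_2)^+$ really belongs to $\cX^1_0$ (which requires both $\tilde u$ and $u_2$ to satisfy compatible exterior conditions on $\R^N \setminus \Omega$, a feature implicit in the definition of sub- and super-solutions for $L$), and then that the convex-inequality trick for the Gagliardo semi-norm applies in the mixed setting. Apart from this, the rest is routine: direct-method minimization for the truncated problem and the Kato-type comparison to confine the minimizer to $[u_1, u_2]$.
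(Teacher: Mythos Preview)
The paper omits the proof entirely, writing only ``The proof of this lemma is classic and we omit it.'' Your sketch is exactly the classical sub/super-solution argument the authors have in mind (truncate the nonlinearity between $u_1$ and $u_2$, minimize the resulting coercive functional on $\cX^1_0$, then compare via the test functions $(\tilde u - u_2)^+$ and $(u_1 - \tilde u)^+$), and it is correct in this mixed local--nonlocal setting. The one minor wrinkle is your appeal to Lemma~\ref{l-infty-estimate} for the $L^\infty$ bound on $u_1,u_2$: that lemma is stated for solutions, not sub/super-solutions, so strictly speaking the boundedness of $u_1,u_2$ should be taken as a hypothesis; this is harmless here, since in the paper's sole use of the lemma (the proof of Theorem~\ref{non-existence-result}) both $u_1=w$ and $u_2=u_\mu$ are genuine solutions of auxiliary equations and hence bounded.
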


\begin{proof}
	The proof of this lemma is classic and we omit it.
\end{proof}
We also need the following Brezis-Kamin-Oswald type result for the operator $L=-\Delta+(-\Delta)^s$.

\begin{lemma}\label{b-k-o-result}
	Assume that the function $\frac{f(\tau)}{\tau}$ is decreasing for $\tau>0$ and let $u_1, u_2\in V$ be respectively positive subsolution and supersolution of the problem
	\begin{equation}\label{t}
	\left\{\begin{aligned}
	Lu&=f(u)\quad\text{in}~~\Omega\\
	u&=0\quad\quad~\text{in}~~\R^N\setminus\Omega.
	\end{aligned}
	\right.
	\end{equation}
	Then $u_1\leq u_2$ in $\R^N$.
\end{lemma}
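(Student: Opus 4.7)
The plan is to adapt the classical Brezis--Oswald test-function argument to the mixed operator $L=-\Delta+(-\Delta)^s$ by combining Picone-type inequalities for its local and nonlocal components. I would argue by contradiction: assume $\Omega^+:=\{x\in\Omega:\,u_1(x)>u_2(x)\}$ has positive measure, set $w:=(u_1^2-u_2^2)_+$, and introduce the two nonnegative test functions
\begin{equation*}
\phi_1:=\frac{w}{u_1},\qquad \phi_2:=\frac{w}{u_2},
\end{equation*}
both supported in $\Omega^+$ and satisfying the pointwise identity $u_1\phi_1=u_2\phi_2=w$. Since $u_1,u_2$ vanish on $\partial\Omega$, the denominators are formally singular there, so in practice I would first replace $u_i$ by $u_i+\varepsilon$ in the denominators, establish all estimates uniformly in $\varepsilon>0$, and pass to the limit $\varepsilon\to 0^+$ using the $L^\infty$-bound from Lemma \ref{l-infty-estimate} and dominated convergence.

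Substituting $\phi_1$ into the weak subsolution inequality for $u_1$ and $\phi_2$ into the weak supersolution inequality for $u_2$ and subtracting produces
\begin{equation*}
\langle u_1,\phi_1\rangle_{\cX^1_0}-\langle u_2,\phi_2\rangle_{\cX^1_0}\;\le\;\int_{\Omega^+}\!\left(\frac{f(u_1)}{u_1}-\frac{f(u_2)}{u_2}\right)w\,dx.
\end{equation*}
On $\Omega^+$ we have $u_1>u_2>0$, so the strict monotonicity of $t\mapsto f(t)/t$ makes the right-hand side strictly negative whenever $|\Omega^+|>0$. The whole argument therefore reduces to showing the left-hand side is nonnegative; this will contradict the strict negativity on the right and force $|\Omega^+|=0$, i.e. $u_1\le u_2$.

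The left-hand side splits into a local and a nonlocal piece. For the local piece, expanding $\phi_1=u_1-u_2^2/u_1$ and $\phi_2=u_1^2/u_2-u_2$ on $\Omega^+$ yields
\begin{equation*}
\nabla u_1\cdot\nabla\phi_1-\nabla u_2\cdot\nabla\phi_2=|\nabla u_1|^2+|\nabla u_2|^2-\nabla u_1\cdot\nabla(u_2^2/u_1)-\nabla u_2\cdot\nabla(u_1^2/u_2),
\end{equation*}
and the two pointwise Picone identities $|\nabla u_j-(u_j/u_i)\nabla u_i|^2\ge 0$ rearrange to show this quantity is $\ge 0$. For the nonlocal piece, I would invoke the discrete Picone inequality (Frank--Seiringer, Brasco--Franzina)
\begin{equation*}
(v(x)-v(y))\left(\frac{u^2(x)}{v(x)}-\frac{u^2(y)}{v(y)}\right)\le(u(x)-u(y))^2 \qquad (u,v>0),
\end{equation*}
applied with $(u,v)=(u_1\wedge u_2,\,u_1)$ and $(u,v)=(u_1\vee u_2,\,u_2)$, together with the pointwise rearrangement inequality (verified by a short case analysis on the orderings of $u_1(x),u_2(x),u_1(y),u_2(y)$)
\begin{equation*}
(u_1(x)-u_1(y))^2+(u_2(x)-u_2(y))^2\ge(v_1(x)-v_1(y))^2+(v_2(x)-v_2(y))^2,
\end{equation*}
where $v_1=u_1\wedge u_2$ and $v_2=u_1\vee u_2$, to conclude that the nonlocal contribution to the left-hand side is likewise $\ge 0$. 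The principal technical obstacle is the regularization step: verifying that both Picone inequalities and the rearrangement identity are stable under $u_i\leadsto u_i+\varepsilon$, and that the passage to the limit in the double integral on $\R^N\times\R^N$ is justified using only the bounds available from $u_1,u_2\in V$.
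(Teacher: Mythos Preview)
Your proposal is correct and follows the same overall strategy as the paper: define $w=(u_1^2-u_2^2)_+$, test the sub/supersolution inequalities with $w/u_1$ and $w/u_2$ (after an $\varepsilon$-regularization of the denominators, exactly as the paper does in its Proposition~\ref{picone-inequality}), and reduce to showing the bilinear-form side is nonnegative while the right-hand side is strictly negative by monotonicity of $f(\tau)/\tau$.

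The one genuine difference lies in how the nonlocal contribution is handled. The paper proves the pointwise inequality
\[
(u_1(x)-u_1(y))\Big(\tfrac{w(x)}{u_1(x)}-\tfrac{w(y)}{u_1(y)}\Big)\;\ge\;(u_2(x)-u_2(y))\Big(\tfrac{w(x)}{u_2(x)}-\tfrac{w(y)}{u_2(y)}\Big)
\]
by a direct four-case analysis on whether $x,y$ lie in $\Pi=\{u_1>u_2\}$ (Cases~1--4 in the proof), invoking the discrete Picone inequality only in the ``both in $\Pi$'' case and handling the mixed cases by an explicit algebraic check. You instead observe that $\phi_1=u_1-v_1^2/u_1$ and $\phi_2=v_2^2/u_2-u_2$ globally with $v_1=u_1\wedge u_2$, $v_2=u_1\vee u_2$, apply the discrete Picone inequality twice to bound the nonlocal piece below by $[(u_1(x)-u_1(y))^2+(u_2(x)-u_2(y))^2]-[(v_1(x)-v_1(y))^2+(v_2(x)-v_2(y))^2]$, and then close with the rearrangement inequality for min/max. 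This packages the same case analysis more conceptually and is a bit cleaner; the paper's version is more hands-on but avoids introducing the auxiliary pair $(v_1,v_2)$. Both routes are standard in the fractional Brezis--Oswald literature and lead to the same conclusion.
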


To prove the above lemma, the following mixed local and nonlocal Picone inequality plays a key role. 

\begin{prop}[Picone inequality]\label{picone-inequality}
	If $u, v\in V$, $Lu$ is a positive bounded Radon measure in $\Omega$ and $u\geq0$ and not identically zero, then 
	\begin{equation}\label{pic-in}
	\int_{\Omega}\frac{Lu}{u}v^2\ dx\leq\|v\|^2_{\cX^1_0}.
	\end{equation} 
\end{prop}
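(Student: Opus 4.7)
The strategy is to combine the classical Picone identity for $-\Delta$ with its fractional counterpart for $(-\Delta)^s$, both applied with the test function $\phi_\eps := v^2/(u+\eps)$, and then to let $\eps \to 0^+$. Since \eqref{pic-in} depends on $v$ only through $v^2$, $|\nabla v|^2$ and $(v(x)-v(y))^2$, and each of these is unchanged or improved when $v$ is replaced by $|v|$, we may assume $v \geq 0$. By the strong maximum principle (Proposition \ref{maximum-principle}) applied to $Lu \geq 0$ with $u \geq 0$ and $u \not\equiv 0$, we also have $u > 0$ almost everywhere in $\Omega$.

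The two pointwise ingredients are the following. For the local part, the arithmetic--geometric mean inequality yields
\begin{equation*}
\nabla u \cdot \nabla\!\left(\frac{v^2}{u+\eps}\right) \;=\; \frac{2v\, \nabla u \cdot \nabla v}{u+\eps} - \frac{v^2\, |\nabla u|^2}{(u+\eps)^2} \;\leq\; |\nabla v|^2.
\end{equation*}
For the nonlocal part, the elementary algebraic identity
\begin{equation*}
(c-d)^2 - (a-b)\!\left(\frac{c^2}{a} - \frac{d^2}{b}\right) \;=\; \frac{(ad-bc)^2}{ab} \;\geq\; 0, \qquad a,b > 0,
\end{equation*}
applied with $a = u(x)+\eps$, $b = u(y)+\eps$, $c = v(x)$, $d = v(y)$, produces the pointwise nonlocal Picone inequality
\begin{equation*}
(u(x)-u(y))\!\left(\frac{v^2(x)}{u(x)+\eps} - \frac{v^2(y)}{u(y)+\eps}\right) \;\leq\; (v(x)-v(y))^2.
\end{equation*}

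Next I would verify that $\phi_{\eps,k} := v_k^2/(u+\eps)$ is an admissible test function against the measure $Lu$, where $v_k := \min(v,k)$ is a Lipschitz truncation introduced to ensure boundedness. Since $v_k$ vanishes outside $\Omega$ and $u+\eps \geq \eps$, one checks $\phi_{\eps,k} \in \cX^1_0 \cap L^\infty(\R^N)$. Pairing $Lu$ with $\phi_{\eps,k}$ and applying the two pointwise Picone bounds term by term gives
\begin{equation*}
\int_\Omega \frac{v_k^2}{u+\eps}\, dLu \;\leq\; \int_\Omega |\nabla v_k|^2\, dx + [v_k]_s^2 \;\leq\; \|v\|_{\cX^1_0}^2,
\end{equation*}
where we used that truncation is $1$-Lipschitz, so that $|\nabla v_k| \leq |\nabla v|$ a.e.\ and $|v_k(x) - v_k(y)| \leq |v(x) - v(y)|$.

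Finally, I would pass to the limits $\eps \to 0^+$ and $k \to \infty$. Since $Lu$ is a positive Radon measure and $v_k^2/(u+\eps)$ increases monotonically to $v^2/u$ as $\eps$ decreases to $0$ and $k$ increases to infinity (using that $u > 0$ a.e.\ in $\Omega$), the monotone convergence theorem applied to the left-hand side delivers \eqref{pic-in}. The main technical obstacle is the justification of $\phi_{\eps,k}$ as a legitimate test function against the measure $Lu$ and the rigorous passage to the limit: one must control the quotient $v^2/u$ near the zero set of $u$, argue that the nonlocal bilinear pairing can be estimated pointwise against the Gagliardo kernel as above, and ensure that the cross terms produced by differentiating $v_k^2/(u+\eps)$ are absorbed correctly by the algebraic identity that underpins the Picone inequality.
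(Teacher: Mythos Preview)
Your approach is correct and essentially identical to the paper's: regularize $u$ by $u+\eps$ (the paper writes $u+\tau$), truncate $v$, apply the local and nonlocal pointwise Picone inequalities term by term (the paper cites \cite{allegretto1998picone} and \cite{brasco2014convexity} rather than deriving them), and pass to the limit by monotone convergence and Fatou. The only slip is that Proposition~\ref{maximum-principle} as stated is a \emph{weak} maximum principle, not a strong one, so it does not by itself yield $u>0$ a.e.; this is immaterial, however, since you only ever divide by $u+\eps$ and the paper likewise never invokes strict positivity of $u$.
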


\begin{proof}
	The proof of this proposition is very similar to that of \cite[Theorem 18]{leonori2015basic}. For the sake of completeness, we give details henceforth. 
	
	For all $k\geq0$, we introduce the function $T_k: \R^+\rightarrow\R$ define by $T_k(t)=\max\{-k,\min\{k,t\}\}$. We set $v_k=T_k(v)$. Now, for every $\tau>0$, we also set $\overline{u}=u+\tau$. Then, simple calculation yields $\frac{v^2_k}{\overline{u}}\in \cX^1_0$. Now,
	\begin{align}\label{d}
	\int_{\Omega}Lu\frac{v^2_k}{\overline{u}}\ dx=\int_{\Omega}L\overline{u}\frac{v^2_k}{\overline{u}}\ dx=\int_{\Omega}\nabla\overline{u}\cdot\nabla\Big(\frac{v^2_k}{\overline{u}}\Big)\ dx+\int_{\R^N}\int_{\R^N}\frac{(\overline{u}(x)-\overline{u}(y))\Big(\frac{v_k(x)^2}{\overline{u}(x)}-\frac{v_k(y)^2}{\overline{u}(y)}\Big)}{|x-y|^{N+2s}}\ dxdy.
	\end{align}
	 From the classical Picone inequality (see \cite[Theorem 1.1]{allegretto1998picone}) it holds that
	\begin{equation}\label{classical-picone-inequality}
	\nabla\overline{u}\cdot \nabla\Big(\frac{v_k^2}{\overline{u}}\Big)\leq |\nabla v_k|^2.
	\end{equation}
	Integrating \eqref{classical-picone-inequality} over $\Omega$, we get that
	\begin{equation}\label{pic-in-classical}
	\int_{\Omega}\nabla\overline{u}\cdot \nabla\Big(\frac{v_k^2}{\overline{u}}\Big)\ dx\leq\int_{\Omega}|\nabla v_k|^2\ dx=\|\nabla v_k\|^2_{L^2(\Omega)}.
	\end{equation}
	On the other hand, from the ``nonlocal'' Picone inequality (see \cite[Proposition 4.2]{brasco2014convexity}), we have
	\begin{equation}\label{nonlocal-picone-inequality}
	(\overline{u}(x)-\overline{u}(y))\Big(\frac{v_k(x)^2}{\overline{u}(x)}-\frac{v_k(y)^2}{\overline{u}(y)}\Big)\leq |v_k(x)-v_k(y)|^2. 
	\end{equation}
	Now, multiplying \eqref{nonlocal-picone-inequality} by $|x-y|^{-N-2s}$ and integrating over $\R^N\times\R^N$, we obtain
	\begin{equation}\label{pic-in-nonlocal}
	\int_{\R^N}\int_{\R^N}\frac{(\overline{u}(x)-\overline{u}(y))\Big(\frac{v_k(x)^2}{\overline{u}(x)}-\frac{v_k(y)^2}{\overline{u}(y)}\Big)}{|x-y|^{N+2s}}\ dxdy\leq \int_{\R^N}\int_{\R^N}\frac{|v_k(x)-v_k(y)|^2}{|x-y|^{N+2s}}\ dxdy=[v_k]^2_s.
	\end{equation}
	Summing up \eqref{pic-in-classical} and \eqref{pic-in-nonlocal}, it follows from \eqref{d} that
	\begin{equation}\label{r}
	\int_{\Omega}Lu\frac{v^2_k}{\overline{u}}\ dx\leq \|v_k\|^2_{L^2(\Omega)}+[v_k]^2_s=\|v_k\|^2_{\cX^1_0}=\|T_k(v)\|^2_{\cX^1_0}.
	\end{equation}
	Now, \eqref{pic-in} follows from \eqref{r} by using Fatou Lemma together with monotone convergence theorem and the fact that $\tau$ was chosen arbitrarily.
\end{proof}

We now prove the Brezis-Kamin-Oswald type result. We follow lines of the proof of Theorem 20 in \cite{leonori2015basic}.

\begin{proof}[Proof of Lemma \ref{b-k-o-result}]
	We want to prove that the set $\Pi=\{x\in \Omega: u_1(x)>u_2(x)\}$ has zero measure i.e., $|\Pi|=0$. Define $w=(u^2_1-u^2_2)_+$. From the Picone inequality \eqref{pic-in} we deduce that $\frac{w}{u_1}$ and $\frac{w}{u_2}$ are admissible test functions to problem \eqref{t}. Therefore,
	\begin{align}\label{f}
	\int_{\Omega}u_1L\Big(\frac{w}{u_1}\Big)\ dx-\int_{\Omega}u_2L\Big(\frac{w}{u_2}\Big)\ dx\leq\int_{\Omega}\Big(\frac{f(u_1)}{u_1}-\frac{f(u_2)}{u_2}\Big)w\ dx.
	\end{align}
	Since $\tau\mapsto\frac{f(\tau)}{\tau}$ is decreasing by assumption, then the right-hand side of \eqref{f} is negative. The proof is completed if we show that the left-hand side in \eqref{f} is nonnegative. In fact, if this was the case, then one would have that $w\equiv0$ and thus $u_1\leq u_2$.
	
	Let us now show that the left-hand side in \eqref{f} is nonnegative. For that, we set $\overline{u}_1=u_1\chi_{\Pi},~\overline{u}_2=u_2\chi_{\Pi}$ and $w=\overline{u}^2_1-\overline{u}_2^2$. It then suffices to prove that
	\begin{align}\label{y}
	\nonumber\nabla &u_2(x)\cdot\nabla\Big(\frac{\overline{u}_1(x)^2-\overline{u}_2(x)^2}{u_2(x)}\Big)+(u_2(x)-u_2(y))\Big(\frac{\overline{u}_1(x)^2-\overline{u}_2(x)^2}{u_2(x)}-\frac{\overline{u}_1(y)^2-\overline{u}_2(y)^2}{u_2(y)}\Big)\\
	&\leq \nabla u_1(x)\cdot\nabla\Big(\frac{\overline{u}_1(x)^2-\overline{u}_2(x)^2}{u_1(x)}\Big)+(u_1(x)-u_1(y))\Big(\frac{\overline{u}_1(x)^2-\overline{u}_2(x)^2}{u_1(x)}-\frac{\overline{u}_1(y)^2-\overline{u}_2(y)^2}{u_1(y)}\Big)
	\end{align}
	i.e.,
	\begin{align}\label{g}
	\nonumber&\nabla u_2(x)\cdot\nabla\Big(\frac{\overline{u}_1(x)^2}{u_2(x)}\Big)+(u_2(x)-u_2(y))\Big(\frac{\overline{u}_1(x)^2}{u_2(x)}-\frac{\overline{u}_1(y)^2}{u_2(y)}\Big)\\
	\nonumber&~~~~+\nabla u_1(x)\cdot\nabla\Big(\frac{\overline{u}_2(x)^2}{u_1(x)}\Big)+(u_1(x)-u_1(y))\Big(\frac{\overline{u}_2(x)^2}{u_1(x)}-\frac{\overline{u}_2(y)^2}{u_1(y)}\Big)\\
	\nonumber&\leq \nabla u_2(x)\cdot\nabla\overline{u}_2(x)+(u_2(x)-u_2(y))(\overline{u}_2(x)-\overline{u}_2(y))\\
	&~~~~+\nabla u_1(x)\cdot\nabla\overline{u}_1(x)+(u_1(x)-u_1(y))(\overline{u}_1(x)-\overline{u}_1(y)).
	\end{align}
	Set
	\begin{align*}
	&Q_1=\nabla u_2(x)\cdot\nabla\Big(\frac{\overline{u}_1(x)^2}{u_2(x)}\Big)+(u_2(x)-u_2(y))\Big(\frac{\overline{u}_1(x)^2}{u_2(x)}-\frac{\overline{u}_1(y)^2}{u_2(y)}\Big),\\
	&Q_2=\nabla u_1(x)\cdot\nabla\Big(\frac{\overline{u}_2(x)^2}{u_1(x)}\Big)+(u_1(x)-u_1(y))\Big(\frac{\overline{u}_2(x)^2}{u_1(x)}-\frac{\overline{u}_2(y)^2}{u_1(y)}\Big),\\
	&Q_3= \nabla u_2(x)\cdot\nabla\overline{u}_2(x)+(u_2(x)-u_2(y))(\overline{u}_2(x)-\overline{u}_2(y)),\\
	&Q_4=\nabla u_1(x)\cdot\nabla\overline{u}_1(x)+(u_1(x)-u_1(y))(\overline{u}_1(x)-\overline{u}_1(y)),
	\end{align*}
 so that \eqref{g} is equivalent to
 \begin{equation}\label{sum}
 Q_1+Q_2\leq Q_3+Q_4.
 \end{equation}
	We now distinguish the following cases: $x, y\in\Pi$, $x, y\notin\Pi$, $x\in \Pi, y\notin\Pi$ and $x\notin\Pi, y\in \Pi$. \\
	
	\textbf{Case 1:} $x, y\in\Pi$. In this case, thanks to Picone inequality, we have $Q_1\leq Q_3$ and $Q_2\leq Q_4$. Hence, \eqref{g} (and thus \eqref{y}) holds true. \\
	
	\textbf{Case 2:} $x, y\notin\Pi$. In this case, we have from the definition of $\overline{u}_1$ and $\overline{u}_2$ that $Q_1=Q_2=Q_3=Q_4=0$.  Hence, \eqref{g} (and thus \eqref{y}) holds true.\\
	
	\textbf{Case 3:} $x\in \Pi, y\notin\Pi$. In this case, recalling again the definition of $\overline{u}_1$ and $\overline{u}_2$ we have
	\begin{align*}
	Q_1+Q_2&=\nabla u_2(x)\cdot\nabla\Big(\frac{\overline{u}_1(x)^2}{u_2(x)}\Big)+\overline{u}_1(x)^2-\overline{u}_1(x)^2\frac{u_2(y)}{u_2(x)}\\
	&~~+\nabla u_1(x)\cdot\nabla\Big(\frac{\overline{u}_2(x)^2}{u_1(x)}\Big)+\overline{u}_2(x)^2-\overline{u}_2(x)^2\frac{u_1(y)}{u_1(x)}
	\end{align*}
	and
	\begin{align*}
	Q_3+Q_4&=\nabla u_2(x)\cdot\nabla\overline{u}_2(x)+\overline{u}_2(x)^2-\overline{u}_2(x)u_2(y)\\
	&~~+\nabla u_1(x)\cdot\nabla\overline{u}_1(x)+\overline{u}_1(x)^2-\overline{u}_1(x)u_1(y).
	\end{align*}
	Hence, \eqref{sum} is equivalent to
	\begin{align}\label{k}
	\nonumber&u_1(y)\Big(\overline{u}_1(x)-\frac{\overline{u}_2(x)^2}{u_1(x)}\Big)+\nabla u_2(x)\cdot\nabla\Big(\frac{\overline{u}_1(x)^2}{u_2(x)}\Big)+\nabla u_1(x)\cdot\nabla\Big(\frac{\overline{u}_2(x)^2}{u_1(x)}\Big)\\
	&\leq u_2(y)\Big(\frac{\overline{u}_1(x)^2}{u_2(x)}-\overline{u}_2(x)\Big)+|\nabla\overline{u}_1(x)|^2+|\nabla\overline{u}_2(x)|^2.
	\end{align}
	But, since from the classical Picone inequality (see \cite[Theorem 1.1]{allegretto1998picone})
	\begin{equation*}
	\nabla u_2(x)\cdot\nabla\Big(\frac{\overline{u}_1(x)^2}{u_2(x)}\Big)\leq|\nabla\overline{u}_1(x)|^2 \quad\quad\text{and} \quad\quad\nabla u_1(x)\cdot\nabla\Big(\frac{\overline{u}_2(x)^2}{u_1(x)}\Big)\leq |\nabla\overline{u}_2(x)|^2,
	\end{equation*}
	then it suffices to show that
	\begin{equation*}
	u_1(y)\Big(\overline{u}_1(x)-\frac{\overline{u}_2(x)^2}{u_1(x)}\Big)\leq u_2(y)\Big(\frac{\overline{u}_1(x)^2}{u_2(x)}-\overline{u}_2(x)\Big)
	\end{equation*}
	which is also equivalent to
	\begin{equation*}
	\frac{u_1(y)}{\overline{u}_1(x)}\leq \frac{u_2(y)}{\overline{u}_2(x)} \quad\quad i.e.,~~u_1(y)\overline{u}_2(x)\leq u_2(y)\overline{u}_1(x).
	\end{equation*}
	The above inequality remains true since $x\in \Pi$ and $y\notin\Pi$. Therefore,  \eqref{g} (and thus \eqref{y}) also holds true. \\
	
	\textbf{Case 4:} $x\notin\Pi, y\in \Pi$. This case follows analogously to  \textbf{Case 3} by interchanging the roles of $x$ and $y$.\\
	
	In conclusion, we have just seen that in all cases, the inequality \eqref{y} is always true, as wanted. The proof is therefore finished.	
\end{proof}
Having the above preliminary results in mind, we are now ready to prove Theorem \ref{non-existence-result}.

\begin{proof}[Proof of Theorem \ref{non-existence-result}]
	We set 
	\begin{equation*}
	\Lambda=\sup\{\mu>0: \eqref{e2}_{\mu}~~\text{has a solution}\}.
	\end{equation*}
	Then, the constant $\mu_\#$ of Theorem \ref{first-main-result} satisfies $0<\mu_\#\leq\Lambda$. Now, consider $\tilde{\lambda}$ with the property
	\begin{equation}\label{m1}
	\tau^{p-1}+\tilde{\lambda}\tau^{q-1}>\lambda_1\tau,\quad\quad\forall\tau>0
	\end{equation}
	where $\lambda_1$ is the first Dirichlet eigenvalue of $-\Delta+(-\Delta)^s$ (see \cite[Proposition 2.4]{maione2022variational}). In the sequel, we denote by $\xi_1$ its corresponding eigenfunction. Then $\xi_1>0$ and satisfies
	\begin{equation}\label{first-eigenvalue}
	\int_{\Omega}\nabla\xi_1\cdot\nabla\phi\ dx+\int_{\R^N}\int_{\R^N}\frac{(\xi_1(x)-\xi_1(y))(\phi(x)-\phi(y))}{|x-y|^{N+2s}}\ dxdy=\lambda_1\int_{\Omega}\xi_1\phi\ dx,\quad\forall\phi\in V.
	\end{equation}
	Consider now $\lambda>0$ such that $u_{\lambda}>0$ is a weak solution of $\eqref{e2}_{\lambda}$ i.e.,
	\begin{equation}\label{j}
	\int_{\Omega}\nabla u_{\lambda}\cdot\nabla\phi\ dx+\int_{\R^N}\int_{\R^N}\frac{(u_{\lambda}(x)-u_{\lambda}(y))(\phi(x)-\phi(y))}{|x-y|^{N+2s}}\ dxdy=\int_{\Omega}u_{\lambda}^{p-1}\phi\ dx+\lambda\int_{\Omega}u_{\lambda}^{q-1}\phi\ dx,
	\end{equation}
	for all $\phi\in V$. Taking $\phi=u_{\lambda}$ and $\phi=\xi_1$ respectively in \eqref{first-eigenvalue} and \eqref{j}, and subtraction \eqref{first-eigenvalue} to \eqref{j}, one obtains that
	\begin{equation}\label{m2}
	\int_{\Omega}u_{\lambda}^{p-1}\xi_1\ dx+\lambda\int_{\Omega}u_{\lambda}^{q-1}\xi_1\ dx = \lambda_1\int_{\Omega}\xi_1u_{\lambda}\ dx.
	\end{equation}
	We now take $\tau=u_{\lambda}$ in \eqref{m1}, multiply \eqref{m1} by $\xi_1$ and integrate over $\Omega$ to get, together with \eqref{m2}, that
	\begin{equation*}
	(\lambda-\tilde{\lambda})\int_{\Omega}u_{\lambda}^{q-1}\xi_1\ dx<0.
	\end{equation*}
	This implies that $\lambda<\tilde{\lambda}$. Thus, $\Lambda\leq\tilde{\lambda}<+\infty$. We now show that there exists a positive solution for every $\lambda\in(0,\Lambda)$. For that, we let $u_{\mu}$ be a solution of $\eqref{e2}_{\mu}$ with $\lambda<\mu<\Lambda$. Then, in particular, $u_{\mu}$ is a supersolution of $\eqref{e2}_{\lambda}$. Consider $w$ as the unique solution (thanks to Lemma \ref{b-k-o-result}) of 
	\begin{equation*}
	-\Delta u+(-\Delta)^su=\lambda u^{q-1},~~~u>0~~\text{in}~~\Omega,\quad u=0~~\text{in}~~\R^N\setminus\Omega.
	\end{equation*}
	From Lemma \ref{b-k-o-result}, we have $w\leq u_{\mu}$. Moreover, since $w$ is a subsolution of $\eqref{e2}_{\lambda}$, then by Lemma \ref{caparison-principle}, there exists $u_{\lambda}\in V$, solution of $\eqref{e2}_{\lambda}$ with $w\leq u_{\lambda}\leq u_{\mu}$. This concludes the proof.
\end{proof}

\section{A non-homogeneous supercritical problem}\label{section:comments}
We would like to mention that a similar existence result to Theorem \ref{first-main-result} can be also established when studying the semilinear problem
\begin{equation}\label{e3}
\left\{\begin{aligned}
-\Delta u+(-\Delta)^su&=|u|^{p-2}u + f~~~~\text{in}~~~\Omega\\
u&=0\quad\quad\quad\quad\quad~~\text{in}~~\R^N\setminus\Omega,
\end{aligned}
\right.
\end{equation}
where $2<p$ and $f\in L^q(\Omega)$ for some $q>N$. In this case, the existence of solutions is assured whenever 
\begin{equation}\label{condition-for-existence}
\|f\|_{L^q(\Omega)}< l.
\end{equation}
for some $l>0$ sufficiently small. Our main result reads as follows.

\begin{thm}
	Let $\Omega\subset\R^N$ be a bounded domain with $C^{1,1}$ boundadry and let $2<p$ and let $f\in L^q(\Omega)$ for some $q>N$. Then there exists $l>0$ such that for $\|f\|_{L^q(\Omega)}<l$, problem \eqref{e3} has a solution in $\cX^1_0\cap W^{2,q}(\Omega)$.
\end{thm}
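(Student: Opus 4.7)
The plan is to mirror the variational scheme used for Theorem \ref{first-main-result}, adapted to accommodate the forcing term $f$. Take $V = \cX^1_0 \cap L^p(\Omega)$ and define
\begin{equation*}
\Psi(u) = \frac{1}{2}\|u\|^2_{\cX^1_0}, \qquad \Phi(u) = \frac{1}{p}\int_{\Omega} |u|^p\, dx + \int_{\Omega} f u\, dx,
\end{equation*}
so that $\Psi$ is proper, convex, lower semi-continuous and G\^ateaux differentiable on $V$, while $\Phi \in C^1(V,\R)$ (note that $fu \in L^1(\Omega)$ since $q > N$ ensures $f \in L^{p'}(\Omega)$ by H\"older, as $\Omega$ is bounded). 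Set $I = \Psi - \Phi$ and, for a radius $r > 0$ to be chosen, consider the restriction $I_{K(r)}$ on the weakly closed convex set $K(r)$ of \eqref{convex-set}.

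First I would show that $I_{K(r)}$ attains its infimum on $V$ at some $\tilde u \in K(r)$, arguing exactly as in Lemma \ref{lm2}. Indeed, on $K(r)$ the bound $|\Phi(u)| \leq \frac{1}{p}|\Omega| r^p + |\Omega|^{1/q'}\|f\|_{L^q(\Omega)} r$ makes $I_{K(r)}$ bounded below, so a minimizing sequence is bounded in $\cX^1_0$; extracting a weak limit, using the compact embedding $\cX^1_0 \hookrightarrow L^2(\Omega)$ together with an interpolation of the form \eqref{lp-convergence} to upgrade to strong $L^p(\Omega)$ convergence, and invoking weak lower semi-continuity of $\Psi$, one obtains a minimizer $\tilde u \in K(r)$. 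The convex-combination argument from the proof of Theorem \ref{first-main-result} then shows that $\tilde u$ is a critical point of $I_{K(r)}$ in the sense of Definition \ref{def3}.

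Next I would verify the pointwise invariance condition (ii) of Theorem \ref{t1}. For $\tilde u \in K(r)$, the datum $g := |\tilde u|^{p-2}\tilde u + f$ satisfies $\|g\|_{L^q(\Omega)} \leq |\Omega|^{1/q} r^{p-1} + \|f\|_{L^q(\Omega)}$, so Lemma \ref{l-infty-estimate} (applicable since $q > N > N/2$) furnishes a unique $v \in \cX^1_0 \cap L^{\infty}(\R^N)$ solving $Lv = g$ in $\Omega$ with zero exterior data, together with
\begin{equation*}
\|v\|_{L^{\infty}(\R^N)} \leq C\bigl(|\Omega|^{1/q} r^{p-1} + \|f\|_{L^q(\Omega)}\bigr).
\end{equation*}
Because $p > 2$, the map $r \mapsto r - C|\Omega|^{1/q} r^{p-1}$ is strictly positive on some small interval $(0, r_0]$. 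Fix any such $r$ and set $l := (r - C|\Omega|^{1/q} r^{p-1})/C > 0$. Under the smallness condition \eqref{condition-for-existence}, the estimate above yields $v \in K(r)$, and Theorem \ref{t1} then produces the desired weak solution $\tilde u \in \cX^1_0 \cap L^{\infty}(\R^N)$ of \eqref{e3}.

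Finally, for the $W^{2,q}$ regularity I would rewrite the equation as $-\Delta \tilde u = g - (-\Delta)^s \tilde u$ in $\Omega$; since $\tilde u \in L^{\infty}(\R^N)$ has support in $\overline\Omega$, standard estimates on $(-\Delta)^s$ applied to bounded functions vanishing outside $\Omega$ give $(-\Delta)^s \tilde u \in L^q(\Omega)$, so the right-hand side lies in $L^q(\Omega)$; with $\partial\Omega \in C^{1,1}$, classical Calder\'on--Zygmund theory for the Dirichlet Laplacian then provides $\tilde u \in W^{2,q}(\Omega)$. The main obstacle, as in Theorem \ref{first-main-result}, is the pointwise invariance step: the inequality $C(|\Omega|^{1/q} r^{p-1} + \|f\|_{L^q(\Omega)}) \leq r$ must hold simultaneously in $r$ and $\|f\|_{L^q}$, and it is exactly this balance that fixes the admissible threshold $l$ appearing in \eqref{condition-for-existence}.
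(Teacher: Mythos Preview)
Your variational scheme is sound up to the last step, but it differs from what the paper sets up. The paper does not prove this theorem in detail; it only records (see \eqref{convex-set-2}) that the appropriate ambient space is $V=\cX^1_0\cap W^{2,q}(\Omega)$ and that the convex set should be
\[
K(r)=\{u\in V:\ \|u\|_{W^{2,q}(\Omega)}\le r\},
\]
with the pointwise invariance driven directly by the $W^{2,q}$ estimate of \cite{su2022regularity} (quoted immediately after the statement). In that setup the solution lies in $W^{2,q}(\Omega)$ by construction, with no bootstrap needed. By contrast, you keep the $L^\infty$-based set from \eqref{convex-set}, run the argument of Lemmas~\ref{lm2} and \ref{lm5} to land a solution in $\cX^1_0\cap L^\infty$, and only afterwards try to upgrade to $W^{2,q}$. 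The existence part of your route is fine and is arguably more elementary, since it reuses Lemma~\ref{l-infty-estimate} verbatim.

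The genuine gap is your final regularity step. You rewrite the equation as $-\Delta\tilde u = g-(-\Delta)^s\tilde u$ and assert that $(-\Delta)^s\tilde u\in L^q(\Omega)$ because $\tilde u\in L^\infty(\R^N)$ with support in $\overline\Omega$. This is not justified: boundedness and compact support (even together with $\tilde u\in H^1(\R^N)$) do not place $(-\Delta)^s\tilde u$ in $L^q$ for $q>N$; for $s\ge 1/2$ one only gets $(-\Delta)^s\tilde u\in H^{1-2s}$, a negative-order space, and near $\partial\Omega$ the singular integral can fail to be $L^q$. The clean fix is to abandon the splitting and apply the mixed-operator regularity theorem of \cite{su2022regularity} directly: since $\tilde u\in L^\infty$ solves $L\tilde u=|\tilde u|^{p-2}\tilde u+f$ with right-hand side in $L^q(\Omega)$ and $\partial\Omega\in C^{1,1}$, that result yields $\tilde u\in W^{2,q}(\Omega)$. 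This is precisely the tool the paper singles out for this theorem, and it is why the paper chooses $K(r)$ in the $W^{2,q}$ norm from the start.
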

Notice that the condition imposed on the $L^q$-norm of $f$ is very crucial to establish the pointwise invariance condition $(ii)$ of Theorem \ref{key-theorem-in-the-variational-methods}. 
The corresponding convex set for the variational approach is defined as
\begin{equation}\label{convex-set-2}
K(r):=\{u\in V: \|u\|_{W^{2,q}(\Omega)}\leq r\} 
\end{equation}
where $V$ is the Banach space defined as $V=\cX^1_0\cap W^{2,q}(\Omega)$. The choice of this convex set is dictated by the $W^{2,q}$-regularity theory (for some $q>N$) for the operator $L=-\Delta+(-\Delta)^s$ established recently in \cite[Theorem 1.4 + Lemma 4.4]{su2022regularity}. It states the following.
\begin{thm}(\cite[Theorem 1.4 + Lemma 4.4]{su2022regularity}) 
	Let $\Omega\subset\R^N$ be a $C^{1,1}$ domain and $s\in (0,1),~N>2s$. Then if $f\in L^q(\Omega)$ with $q>N$, then the problem
	\begin{equation}
	Lu=f,\quad\text{in}~~\Omega
	\end{equation}
	has a unique solution $u\in W^{2,q}(\Omega)\cap W^{1,q}_0(\Omega)$. Furthermore,
	\begin{equation}
	\|u\|_{W^{2,q}(\Omega)}\leq C(\|u\|_{L^q(\Omega)}+\|f\|_{L^q(\Omega)})
	\end{equation}
	where $C=C(\Omega,N,s,q)>0$.
\end{thm}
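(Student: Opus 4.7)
The plan is to treat the fractional term $(-\Delta)^s u$ as a lower-order perturbation of the leading classical Laplacian and to reduce the statement to the classical Calder\'on-Zygmund / Agmon-Douglis-Nirenberg $W^{2,q}$-theory for $-\Delta$ on a $C^{1,1}$ domain. Concretely, I would rewrite the equation as the classical Poisson problem
\begin{equation*}
-\Delta u = f-(-\Delta)^s u\quad\text{in}~\Omega,\qquad u=0\quad\text{on}~\partial\Omega,
\end{equation*}
and view the right-hand side as an $L^q$ datum, once enough regularity of $u$ has been proved to justify this splitting.

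First I would dispose of existence and uniqueness at the level of $\cX^1_0$. Since the bilinear form $\langle\cdot,\cdot\rangle_{\cX^1_0}$ is continuous and coercive and $L^q(\Omega)\subset L^2(\Omega)\subset(\cX^1_0)^*$ (as $q>N>2$ and $\Omega$ is bounded), the Lax-Milgram lemma produces a unique weak solution $u\in\cX^1_0$. Lemma \ref{l-infty-estimate} then upgrades it to $u\in L^\infty(\R^N)$ with a quantitative bound by $\|f\|_{L^q(\Omega)}$, which will serve as the initial input for the bootstrap.

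The analytic heart of the argument is the following interpolation estimate: for every $\varepsilon>0$ there exists $C_\varepsilon>0$ such that, for every $w\in W^{2,q}(\Omega)\cap W^{1,q}_0(\Omega)$ extended by zero to $\R^N\setminus\Omega$,
\begin{equation*}
\|(-\Delta)^s w\|_{L^q(\Omega)}\le \varepsilon\,\|w\|_{W^{2,q}(\Omega)}+C_\varepsilon\,\|w\|_{L^q(\Omega)}.
\end{equation*}
The near-singular part of the principal value is handled by the Morrey embedding $W^{2,q}(\Omega)\hookrightarrow C^{1,\alpha}(\overline{\Omega})$, which makes a second-order Taylor expansion at $x$ integrable against $|x-y|^{-N-2s}$, and the far tail is dominated by $\|w\|_{L^1(\R^N)}\le|\Omega|^{1-1/q}\|w\|_{L^q(\Omega)}$. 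Plugging this into the classical estimate
\begin{equation*}
\|u\|_{W^{2,q}(\Omega)}\le C\bigl(\|{-\Delta u}\|_{L^q(\Omega)}+\|u\|_{L^q(\Omega)}\bigr)
\end{equation*}
valid on $C^{1,1}$ domains for $u\in W^{2,q}\cap W^{1,q}_0$, and absorbing the $\varepsilon\,\|u\|_{W^{2,q}}$ on the left, yields the desired a~priori inequality.

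The main obstacle is not the final estimate but the qualitative gain of two derivatives, i.e.\ the a~priori fact that $u$ belongs to $W^{2,q}$ at all. I would achieve this through a finite bootstrap: starting from $u\in\cX^1_0\cap L^\infty$, I would iteratively control $\|(-\Delta)^s u\|_{L^{q_k}}$ by a weaker norm that is already known, apply the classical $L^{q_k}$-theory for $-\Delta$ on $\Omega$ to land $u$ in $W^{2,q_k}$, and then pass to a larger exponent $q_{k+1}$ via $W^{2,q_k}\hookrightarrow W^{1,q_{k+1}}$, reaching any prescribed $q>N$ in finitely many steps. The technically delicate point is the behaviour of $(-\Delta)^s u(x)$ for $x$ close to $\partial\Omega$: because $u$ is cut off sharply there, the tail contribution scales like $\dist(x,\partial\Omega)^{-2s}$ times the $L^1$-mass of $u$, and one must combine the $C^{1,1}$-regularity of $\partial\Omega$ with the Hopf-type linear decay $|u(x)|\lesssim\dist(x,\partial\Omega)$ near the boundary to ensure that these terms are $L^q$-integrable up to $\partial\Omega$.
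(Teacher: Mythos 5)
This theorem is not proved in the paper; it is quoted from \cite{su2022regularity} and used as a black box in Section 4, so there is no in-paper proof to compare against. Your overall strategy --- rewrite $Lu=f$ as $-\Delta u = f-(-\Delta)^s u$, settle existence and uniqueness in $\cX^1_0$ by Lax--Milgram, and then upgrade regularity by the classical Calder\'on--Zygmund $W^{2,q}$ theory for $-\Delta$ plus an absorption argument --- is the natural one, and it does work when $s<\tfrac12$, since then the zero extension of a function in $W^{2,q}(\Omega)\cap W^{1,q}_0(\Omega)$ is globally $W^{1,q}(\R^N)$ and $(-\Delta)^s$ is of order $2s<1$.

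For $s\ge\tfrac12$ there is a genuine gap in the claimed interpolation inequality
\[
\|(-\Delta)^s w\|_{L^q(\Omega)}\le \varepsilon\|w\|_{W^{2,q}(\Omega)}+C_\varepsilon\|w\|_{L^q(\Omega)},
\qquad w\in W^{2,q}(\Omega)\cap W^{1,q}_0(\Omega)\ \text{extended by }0,
\]
and it occurs exactly at the boundary you flag at the end. Such a $w$ typically has a nonvanishing normal derivative and hence linear decay $w\sim\dist(\cdot,\partial\Omega)$; the resulting jump in $\nabla w$ across $\partial\Omega$ means the zero extension is globally only $W^{1,q}(\R^N)$, not better. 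For $x$ with $d=\dist(x,\partial\Omega)$ small, your second-order Taylor argument controls the part of the integral with $y\in\Omega$ but not $y\notin\Omega$: there $w(y)=0$ while $w(x)+\nabla w(x)\cdot(y-x)$ is of size $O(|y-x|)$, and the exterior contribution
\[
\int_{\R^N\setminus\Omega}\frac{w(x)}{|x-y|^{N+2s}}\,dy\ \sim\ w(x)\,d^{-2s}\ \sim\ d^{\,1-2s}
\]
dominates the near field and lies in $L^q(\Omega)$ only when $(2s-1)q<1$. Since $q>N$, that range is empty as soon as $s\ge\tfrac12+\tfrac{1}{2N}$; the Hopf-type linear decay you invoke is thus the \emph{source} of the obstruction, not a cure for it. The bootstrap hits the same wall: passing $W^{2,q_k}\hookrightarrow W^{1,q_{k+1}}$ costs a full derivative, and $(-\Delta)^s$ of a zero-extended $W^{1,q_{k+1}}$ function lands in $L^{q_{k+1}}$ only for $s\le\tfrac12$. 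Any argument covering the full range $s\in(0,1)$ must use that $u$ solves the coupled equation --- so that the singular boundary layer of $(-\Delta)^s u$ is matched against $\Delta u$ --- rather than a free interpolation estimate for arbitrary $W^{2,q}_0$ functions; that is precisely what the quoted Lemma 4.4 of \cite{su2022regularity} is there to supply.
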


\section{Hamiltonian systems}\label{section:hamiltonian-system}
We consider the following Hamiltonian system
\begin{equation}\label{hamiltonian system}
    \left\{ 
    \begin{aligned}
        Lu&=|v|^{p-2}v~~~~~~~~~~~~~~~~~~~\text{in}~~~\Omega\\
        Lv&=|u|^{d-2}u+\mu |u|^{q-2}u~~~~\text{in}~~~\Omega\\
        u&=v=0~~~~~~~~~~~~~~~~~~~~~~\text{in}~~~\R^N\setminus\Omega
    \end{aligned}
    \right.
\end{equation}
where $\Omega\subset\R^N$ is a bounded $C^{1,1}$ domain and $1<q<2<p, d$. We recall that $L=-\Delta+(-\Delta)^s$ with $s\in(0,1)$. Throughout this section, we consider $s\in (0,\frac{1}{2}]$. Denote by $p'$ the conjugate exponent of $p$. Then $1<p'<2$. We also assume that $N>2p'>2s$. Since $\Omega$ is a bounded $C^{1,1}$ domain in $\R^N$, then it possesses the cone property (see e.g. \cite[Theorem 1.2.2.2]{grisvard2011elliptic}). Therefore, the embedding
\begin{equation}\label{compact-embedding}
    W^{2,p'}(\Omega)\hookrightarrow L^{p'}(\Omega) 
\end{equation}
is compact since $1<p'<p'^*_2:=\frac{Np'}{N-2p'}$.

To prove Theorem \ref{main-result-hamiltonian-system} one may be tempted to find critical points of the corresponding energy function
\begin{equation}
    F(u,v)=\int_{\Omega}L(u) v\ dx-\frac{1}{p}\int_{\Omega}|v|^p\ dx-\frac{1}{d}\int_{\Omega}|u|^{d}\ dx-\frac{\mu}{q}\int_{\Omega}|u|^q\ dx.
\end{equation}
However, the first term on the right-hand side of the above equality is difficult to manipulate. Our strategy is then to find an equivalent problem to \eqref{hamiltonian system}. To this end, we proceed as follows.

Let $(u,v)$ be a weak solution of \eqref{hamiltonian system}. Then from the first equation in \eqref{hamiltonian system}, we have
\begin{equation*}
    v=|Lu|^{\frac{1}{p-1}-1}Lu=|Lu|^{p'-2}Lu.
\end{equation*}
In the latter, we have used that $\frac{1}{p-1}=p'-1$. Now, substituting $v$ by its value in the second equation of \eqref{hamiltonian system}, we obtain the following equivalent problem
\begin{equation}\label{Dirichlet-problem-from-hamiltonian-system}
    \left\{ 
    \begin{aligned}
        L\Big(|Lu|^{p'-2}Lu\Big)&=|u|^{d-2}u+\mu |u|^{q-2}u~~~~\text{in}~~~\Omega\\
        u&=Lu=0~~~~~~~~~~~~~~~~~~~\text{in}~~~\R^N\setminus\Omega.
    \end{aligned}
    \right.
\end{equation}
The corresponding Euler-Lagrange functional energy of \eqref{Dirichlet-problem-from-hamiltonian-system} is the following
\begin{align*}
    I(u)&=\frac{1}{p'}\int_{\Omega}|Lu|^{p'}\ dx-\frac{1}{d}\int_{\Omega}|u|^{d}\ dx-\frac{\mu}{q}\int_{\Omega}|u|^{q}\ dx\\
    &=\Psi(u)-\Phi(u)
\end{align*}
where
\begin{equation*}
    \Psi(u)=\frac{1}{p'}\int_{\Omega}|Lu|^{p'}\ dx\quad\quad\text{and}\quad\quad\Phi(u)=\frac{1}{d}\int_{\Omega}|u|^{d}\ dx+\frac{\mu}{q}\int_{\Omega}|u|^{q}\ dx.
\end{equation*}
In this section, we consider the reflexive Banach space $V=\cX^1_0\cap W^{2,p'}(\Omega)\cap L^{d}(\Omega)$ with the norm
\begin{equation*}
    \|u\|_{V}=\|u\|_{\cX^1_0}+\|u\|_{W^{2,p'}(\Omega)}+\|u\|_{L^{d}(\Omega)}.
\end{equation*}
We also consider the following convex space
\begin{equation*}
    K=K(r)=\{u\in V: \|u\|_{L^{\infty}(\Omega)}\leq r\}.
\end{equation*}
As in the previous sections, $I_K=\Psi_K-\Phi$ denotes the restriction of $I$ on $K$.

\begin{prop}\label{prop-1}
    Let $\ov{u}\in V$ be a critical point of $I_K$ in the sense of Definition \ref{def3}. If there exists $\Tilde{u}\in \cX^1_0\cap W^{2,p'}(\Omega)$ and $\tilde{v}\in K$ such that
    \begin{equation}\label{c-1}
        \left\{
        \begin{aligned}
            L\tilde{u}&=|\Tilde{v}|^{p-2}\Tilde{v}~~~~~~~~~~~~~~~~~~\text{in}~~\Omega\\
            L\tilde{v}&=|\ov{u}|^{d-2}\ov{u}+\mu |\ov{u}|^{q-2}\ov{u}~~~~\text{in}~~\Omega
        \end{aligned}
        \right.
    \end{equation}
    then $(\Tilde{u},\Tilde{v})$ is a solution of
    \begin{equation}\label{c-2}
        \left\{
        \begin{aligned}
            Lu&=|v|^{p-2}v~~~~~~~~~~~~~~~~~~\text{in}~~\Omega\\
            Lv&=|u|^{d-2}u+\mu |u|^{q-2}u~~~~\text{in}~~\Omega
        \end{aligned}
        \right.
    \end{equation}
\end{prop}
Before proving this proposition, we first recall the following well-known result from convex analysis.
\begin{lemma}\label{convex-analysis-result}
    Let $V$ be a reflexive Banach space and let $f:V\to\R$ be a convex and differentiable functional. If
    \begin{equation*}
        f(u)-f(v)\geq\langle Df(u), u-v\rangle,
    \end{equation*}
    then $Df(u)=Df(v)$, where $\langle\cdot,\cdot\rangle$ is the the duality pairing between $V$ and $V^*$. In particular, if $f$ is strictly convex, then $u=v$.
\end{lemma}
We now give the proof of Proposition \ref{prop-1}.
\begin{proof}[Proof of Proposition \ref{prop-1}]
    From the first equation in \eqref{c-1}, we have
    \begin{equation}\label{d-1}
        \tilde{v}=|L\tilde{u}|^{p'-2}L\tilde{u}.
    \end{equation}
    Now, we introduce the functional $J:\cX^1_0\cap W^{2,p'}(\Omega)\to \R$ defined as follows
    \begin{equation*}
        J(w)=\frac{1}{p'}\int_{\Omega}|Lw|^{p'}\ dx-\frac{1}{d}\int_{\Omega}|\ov u|^{d}-\frac{\mu}{q}\int_{\Omega}|\ov u|^{q}\ dx.
    \end{equation*}
    Then $\tilde{u}$ is a critical point of $J$. Indeed, for every $\phi\in \cX^1_0\cap W^{2,p'}(\Omega)$, 
    \begin{align*}
        \langle J'(\tilde{u}),\phi\rangle&=\int_{\Omega}|L\tilde{u}|^{p'-2}L\tilde{u}L\phi\ dx-\int_{\Omega}(|\ov u|^{d-2}\ov u+\mu |\ov u|^{q-2}\ov u)\phi\ dx\\
        &=\int_{\Omega}\tilde{v}L\phi\ dx-\int_{\Omega}(|\ov u|^{d-2}\ov u+\mu |\ov u|^{q-2}\ov u)\phi\ dx\\
        &=\int_{\Omega}(L\tilde{v})\phi\ dx-\int_{\Omega}(|\ov u|^{d-2}\ov u+\mu |\ov u|^{q-2}\ov u)\phi\ dx\\
        &=\int_{\Omega}(|\ov u|^{d-2}\ov u+\mu |\ov u|^{q-2}\ov u)\phi\ dx-\int_{\Omega}(|\ov u|^{d-2}\ov u+\mu |\ov u|^{q-2}\ov u)\phi\ dx\\
        &=0.
    \end{align*}
    Thus, $\tilde{u}$ is a critical point of $J$. Notice that in the second equality, we have used \eqref{d-1}, while in the fourth equality, \eqref{c-1} has been used. Now, taking in particular $\phi=\tilde{u}-\ov u$, we infer that
    \begin{align*}
        0=\langle J'(\tilde{u}),\tilde{u}-\ov u\rangle=\int_{\Omega}|L\tilde{u}|^{p'-2}L\tilde{u}L(\tilde{u}-\ov u)\ dx-\int_{\Omega}(|\ov u|^{d-2}\ov u+\mu |\ov u|^{q-2}\ov u)(\tilde{u}-\ov u)\ dx
    \end{align*}
    that is,
    \begin{align}\label{d-2}
        \nonumber\int_{\Omega}|L\tilde{u}|^{p'-2}L\tilde{u}L(\tilde{u}-\ov u)\ dx&=\int_{\Omega}(|\ov u|^{d-2}\ov u+\mu |\ov u|^{q-2}\ov u)(\tilde{u}-\ov u)\ dx\\
        &=\int_{\Omega}D\Phi(\ov u)(\tilde{u}-\ov u)\ dx.
    \end{align}
     On the other hand, since $\ov{u}\in V$ is a critical point of $I_K$ in the sense of Definition \ref{def3}, then
    \begin{equation*}
        \frac{1}{p'}\int_{\Omega}|L\phi|^{p'}\ dx-\frac{1}{p'}\int_{\Omega}|L\ov{u}|^{p'}\ dx\geq\int_{\Omega}D\Phi(\ov{u})(\phi-\ov{u})\ dx~~~~\forall\phi\in V.
    \end{equation*}
In particular, $\phi=\tilde{u}$ gives
\begin{equation}\label{d-3}
        \frac{1}{p'}\int_{\Omega}|L\tilde{u}|^{p'}\ dx-\frac{1}{p'}\int_{\Omega}|L\ov{u}|^{p'}\ dx\geq\int_{\Omega}D\Phi(\ov{u})(\tilde{u}-\ov{u})\ dx.
    \end{equation}
From \eqref{d-2} and \eqref{d-3}, we get
\begin{equation*}
    \frac{1}{p'}\int_{\Omega}|L\tilde{u}|^{p'}\ dx-\frac{1}{p'}\int_{\Omega}|L\ov{u}|^{p'}\ dx\geq \int_{\Omega}|L\tilde{u}|^{p'-2}L\tilde{u}L(\tilde{u}-\ov u)\ dx.
\end{equation*}
Since the function $t\mapsto\frac{1}{p'}|t|^{p'}$ is strictly convex, then by Lemma \ref{convex-analysis-result}, we have that $\tilde{u}=\ov u$. The proof is therefore finished by considering $\tilde{u}=\ov u$ in \eqref{c-1}.
\end{proof}

\begin{lemma}\label{lm2-2}
	Suppose that
	\begin{equation*}
	K=\{u\in K(r): u(x)\geq0~~\text{for almost all}~~x\in\Omega\}.
	\end{equation*}
	Then there exists $\ov{u}\in K$ such that $I_{K}(\ov{u})=\inf_{u\in V}I_{K}(u)$.
\end{lemma}

\begin{proof}
    Let $\beta=\inf_{u\in V}I_K(u)$. Recalling the definition of $\Psi_K$, we have $I_K(u)=+\infty$ for $u\notin K$. Now, it is not difficult to see that $\beta>-\infty$. In fact, for $u\in K$,
    \begin{align*}
        \Phi(u)&=\frac{1}{d}\int_{\Omega}|u|^d\ dx+\frac{\mu}{q}\int_{\Omega}|u|^q\ dx\\
        &\leq c_1\|u\|^d_{L^{\infty}(\Omega)}+c_2\|u\|^q_{L^{\infty}(\Omega)}\\
        &\leq c_1r^d+c_2r^q.
    \end{align*}
Hence,
\begin{equation*}
    I_K(u)=\Psi_K(u)-\Phi(u)\geq-(c_1r^d+c_2r^q)>-\infty
\end{equation*}
and therefore $\beta>-\infty$. Note that in the first inequality, we have that $\Psi_K$ is non-negative.

We wish now to show that the infimum $\beta$ is achieved. Let $\{u_n\}$ be a minimizing sequence of $I_K$ in $K$, namely $u_n\in K$ is such that $I_K(u_n)\to \beta$. Then $\{I_K(u_n)\}$ is bounded. Moreover, since also $\{\Phi(u_n)\}$ is bounded (thanks to the fact that $u_n\in K$), we then deduce that $\{\Psi_K(u_n)\}$ is bounded as well. This means that there is $C_1>0$ such that
\begin{equation*}
    \Psi_K(u_n)\leq C_1~~~~~\forall n,
\end{equation*}
that is,
\begin{equation}\label{o7}
    \|Lu_n\|_{L^{p'}(\Omega)}\leq C_2~~~~\forall n.
\end{equation}
We now set
\begin{equation}\label{o9}
    w_n=|Lu_n|^{p'-2}Lu_n.
\end{equation}
It is easily seen that $u_n$ satisfies
\begin{equation}
    \left\{ 
    \begin{aligned}
        Lu_n&=|w_n|^{p-2}w_n~~~~~\text{in}~~~\Omega\\
        u_n&=0~~~~~~~~~~~~~~~~~\text{in}~~~\R^N\setminus\Omega.
    \end{aligned}
    \right.
\end{equation}
Notice that by \eqref{o7}, $|w_n|^{p-2}w_n\in L^{p'}(\Omega)$ with
\begin{equation*}
    \||w_n|^{p-2}w_n\|_{L^{p'}(\Omega)}=\|Lu_n\|_{L^{p'}(\Omega)}\leq C_2~~~~~~\forall n.
\end{equation*}
Hence, by \cite[Theorem 1.4]{su2022regularity} there exists a positive constant $C=C(\Omega,N,s,p')$ such that
\begin{align*}
    \|u_n\|_{W^{2,p'}(\Omega)}&\leq C(\|u_n\|_{L^{p'}(\Omega)}+\||w_n|^{p-2}w_n\|_{L^{p'}(\Omega)})\\
    &\leq C(c_1\|u_n\|_{L^{\infty}(\Omega)}+\|Lu_n\|_{L^{p'}(\Omega)})\\
    &\leq C(c_1r+C_2)~~~~~~~~~~\forall n.
\end{align*}
This implies that $u_n$ is bounded in $W^{2,p'}(\Omega)$. We also deduce that $u_n$ is bounded in $\cX^1_0$. Indeed,
\begin{align}\label{p_3}
  \nonumber  \|\nabla u_n\|^2_{L^2(\Omega)}&=\int_{\Omega}|\nabla u_n|^2\ dx=\int_{\Omega}(-\Delta u_n)u_n\ dx\\
  \nonumber  &\leq \int_{\Omega}|\Delta u_n||u_n|\ dx
    \leq\|u_n\|_{L^{p}(\Omega)}\|\Delta u_n\|_{L^{p'}(\Omega)}\\
    &\leq |\Omega|^{\frac{1}{p}}r\|\Delta u_n\|_{L^{p'}(\Omega)}\leq |\Omega|^{\frac{1}{p}}r\|u_n\|_{W^{2,p'}(\Omega)} \leq C~~~~~\forall n. 
\end{align}
Now, thanks to \eqref{p_1}, it follows from \eqref{p_3} that $u_n$ is bounded in $\cX^1_0$. Thus, after passing to a subsequence, there is $\ov{u}\in \cX^1_0\cap W^{2,p'}(\Omega)$ such that
\begin{align*}
    &u_n\rightharpoonup \ov{u}~~~\text{weakly in}~~\cX^1_0\cap W^{2,p'}(\Omega)\\
    &u_n\to \ov{u}~~~\text{strongly in}~~L^{p'}(\Omega).
\end{align*}
Notice that in the latter, we have used the compact embedding \eqref{compact-embedding}. In particular, $u_n\to u$ a.e. in $\Omega$. Thus, $\|u\|_{L^{\infty}(\Omega)}\leq r$. On the other hand, $u_n\to u$ strongly in $L^{d}(\Omega)$. Indeed,
\begin{align*}
    \|u_n-\ov{u}\|^d_{L^d(\Omega)}&=\int_{\Omega}|u_n-\ov{u}|^d\ dx\\
    &=\int_{\Omega}|u_n-\ov{u}|^{d-p'}|u_n-\ov{u}|^{p'}\ dx\\
    &\leq (\|u_n\|^{d-p'}_{L^{\infty}(\Omega)}+\|\ov{u}\|^{d-p'}_{L^{\infty}(\Omega)})\|u_n-\ov{u}\|^{p'}_{L^{p'}(\Omega)}\\
    &\leq (2r)^{d-p'}\|u_n-\ov{u}\|^{p'}_{L^{p'}(\Omega)}\to 0~~~~\text{as}~~~ n\to \infty.
\end{align*}
From the above results, we have that $\ov{u}\in V$. Moreover, since $\|\ov{u}\|_{L^{\infty}(\Omega)}\leq r$, we deduce that $\ov u\in K$. Thus the proof is completed if we prove that $I_K(\ov{u})\leq \beta$. To this end, let us first notice that $u_n\to \ov{u}$ strongly in $L^q$ (since $q<d$) then by dominated convergence theorem,
\begin{equation*}
    \Phi(u_n)\to\Phi(\ov{u})~~~~\text{as}~~~n\to\infty.
\end{equation*}
We now wish to show that $\Psi_K$ is lower semi-continuous i.e.,
\begin{equation}\label{o6-1}
    \Psi_K(\ov{u})\leq \liminf_{n\to\infty}\Psi_K(u_n).
\end{equation}
Notice first that 
\begin{equation*}
    Lu_n \rightharpoonup L\ov{u}~~~~\text{weakly in}~~L^{p'}(\Omega)
\end{equation*}
that is,
\begin{equation}\label{o10}
    \lim_{n\to\infty}\int_{\Omega}(Lu_n-L\ov{u})\phi\ dx=0~~~~~~~\forall\phi\in L^p(\Omega).
\end{equation}
Now recalling that
\begin{equation*}
    \Psi_K(u_n)-\Psi_K(\ov{u})=\frac{1}{p'}\int_{\Omega}|Lu_n|^{p'}\ dx-\frac{1}{p'}\int_{\Omega}|L\ov{u}|^{p'}\ dx,
\end{equation*}
we use the convexity of the function $t\mapsto \frac{1}{p'}|t|^{p'}$ to see that
\begin{equation*}
    \Psi_K(u_n)-\Psi_K(\ov{u})\geq \int_{\Omega}|L\ov u|^{p'-2}L\ov u(Lu_n-L\ov{u})\ dx.
\end{equation*}
Since $|L\ov u|^{p'-2}L\ov u\in L^p(\Omega)$, we then let $n\to\infty$ in the above inequality to get \eqref{o6-1}, thanks to \eqref{o10}. Thus,
\begin{equation*}
    I_K(\ov{u})=\Psi_K(\ov{u})-\Phi(\ov{u})\leq\liminf_{n\to\infty}(\Psi_K(u_n)-\Phi(u_n))=\liminf_{n\to\infty}I_{K}(u_n)=\beta.
\end{equation*}
This completes the proof.
\end{proof}

\begin{lemma}\label{lm5-1}
	If $1<q<2<p$, there exists $\mu_*>0$ such that for every $\mu\in(0,\mu_*)$, there exist $r_1, r_2\in\R$ with $r_1<r_2$ such that for every $r\in [r_1, r_2]$ and every $\ov{u}\in K(r)$, the problem
	\begin{equation}
    \left\{
    \begin{aligned}
     L\tilde{u}&=|\tilde{v}|^{p-2}\tilde{v}~~~~~~~~~~~~~~~~~~~\text{in}~~~\Omega \\
     L\tilde{v}&=|\ov{u}|^{d-2}\ov{u}+\mu|\ov{u}|^{q-2}\ov{u}~~~~~\text{in}~~~\Omega
    \end{aligned}
    \right.
	\end{equation}
	has a weak solution $(\tilde{u}, \tilde{v})$ with $\tilde{u}\in \cX^1_0\cap W^{2,p'}(\Omega)\cap L^{\infty}(\Omega)$ and $\tilde{v}\in K(r)$. 
\end{lemma}

\begin{proof}
    Consider $\ov{u}\in K(r)$. Then $\|\ov{u}\|_{L^{\infty}(\Omega)}\leq r$. In particular, $\ov{u}\in L^{\infty}(\Omega)$. Thus, we deduce from Lemma \ref{l-infty-estimate} the existence of $\tilde{v}\in\cX^1_0$ solving in the weak sense the equation
    \begin{equation*}
        L\tilde{v}=|\ov{u}|^{p-2}\ov{u}+\mu|\ov{u}|^{q-2}\ov{u}~~~~~\text{in}~~~\Omega.
    \end{equation*}
    Furthermore, $\tilde{v}\in L^{\infty}(\Omega)$ with
    \begin{align*}
        \|\tilde{v}\|_{L^{\infty}(\Omega)}&\leq C\||\ov{u}|^{d-2}\ov{u}+\mu |\ov{u}|^{q-2}\ov{u}\|_{L^{\infty}(\Omega)}\\
        &\leq C(\|\ov{u}\|^{d-1}_{L^{\infty}(\Omega)}+\mu\|\ov{u}\|^{q-1}_{L^{\infty}(\Omega)})\\
        &\leq C(r^{d-1}+\mu r^{q-1}).
    \end{align*}
It then follows from the inequality above that $\tilde{v}\in K(r)$ if and only if $C(r^{d-1}+\mu r^{q-1})\leq r$. A simple analysis of the function $r\mapsto \alpha(r)=C(r^{d-2}+\mu r^{q-2})-1$ shows that there exists $\mu_*>0$ such that for all $\mu\in(0,\mu_*)$, there exist $r_1<r_2$ such that for all $r\in [r_1,r_2]$, $\alpha(r)\leq0$. On the other hand, by \cite[Theorem 1.4]{su2022regularity}, we have $\tilde{v}\in W^{2,p'}(\Omega)$. Hence, $\tilde{v}\in K(r)$.

Now, using again Lemma \ref{l-infty-estimate} (thanks to the fact that $\tilde{v}\in L^{\infty}(\Omega)$), we find $\tilde{u}\in\cX^1_0$ satisfying in the weak sense the equation 
\begin{equation*}
    L\tilde{u}=|\tilde{v}|^{p-2}\tilde{v}~~~~~~\text{in}~~~\Omega
\end{equation*}
and $\tilde{u}\in L^{\infty}(\Omega)$. Hence, by \cite[Theorem 1.4]{su2022regularity} it follows that $\tilde{u}\in W^{2,p'}(\Omega)$. Thus, $\tilde{u}\in \cX^1_0\cap W^{2,p'}(\Omega)$. This completes the proof. 
\end{proof}
We now prove the main result of this section.

\begin{proof}[Proof of Theorem \ref{main-result-hamiltonian-system}]
 Let $r_1, r_2$ and $\mu_*$ be as in Lemma \ref{lm5-1}. Set
 \begin{equation*}
     K:=\{u\in K(r): u(x)\geq0~~\text{a.e. in}~~\Omega\}
 \end{equation*}
 for some $r\in [r_1, r_2]$. Then by Lemma \ref{lm2-2} there exists $\ov{u}\in K$ such that $I_K(\ov{u})=\inf_{u\in V}I_K(u)$. We claim that $\ov u$ is a nontrivial critical point of $I_K$ in the sense of Definition \ref{def3}. Indeed, since $\tilde{u}$ is a minimizer of $I_K$ in $V$, then
	\begin{align*}
	I_K(\tilde{u})\leq I_K((1-t)\tilde{u}+tv)\quad\quad \forall v\in V
	\end{align*} 
	for all sufficiently small $t>0$.
	Recalling that $I_K=\Psi_K-\Phi$, then from the inequality above and using that $\Psi_K$ is convex, we have
	\begin{align*}
	0\leq I_K((1-t)\tilde{u}+tv)-I_K(\tilde{u})&=\Phi(\tilde{u})-\Phi((1-t)\tilde{u}+tv)+\Psi_K((1-t)\tilde{u}+tv)-\Psi_K(\tilde{u})\\
	&\leq \Phi(\tilde{u})-\Phi(\tilde{u}+t(v-\tilde{u}))+t(\Psi_K(v)-\Psi_K(\tilde{u})).
	\end{align*}
	The identity \eqref{critical-point} then follows by dividing the inequality above by $t$ and letting $t\rightarrow0$.

 Regarding the non-triviality of $\ov u$, we notice first that for $u_0\in K$, we have $tu_0\in K$ for every $t\in [0,1]$. Then
 \begin{align*}
     I_K(tu_0)&=\frac{t^{p'}}{p'}\int_{\Omega}|Lu_0|^{p'}\ dx-\frac{t^d}{d}\int_{\Omega}|u_0|^d-\frac{\mu t^{q}}{q}\int_{\Omega}|u_0|^{q}\ dx \\
     &=t^{q}\Big(\frac{t^{p'-q}}{p'}\int_{\Omega}|Lu_0|^{p'}\ dx-\frac{t^{d-q}}{d}\int_{\Omega}|u_0|^d-\frac{\mu}{q}\int_{\Omega}|u_0|^{q}\ dx\Big).
 \end{align*}
 For $t$ sufficiently small and using that $q<p'$, we have $I_K(tu_0)<0$ and thus $I_K(\ov{u})<0$. This implies that $\ov{u}$ is a nontrivial critical point of $I_K$.

 Now, by Lemma \ref{lm5-1}, there is $(\tilde{u},\tilde{v})\in (\cX^1_0\cap W^{2,p'}(\Omega))\times K(r)$ satisfying in the weak sense
 \begin{equation*}
     \left\{
     \begin{aligned}
         L\tilde{u}&=|\tilde{v}|^{p-2}\tilde{v}~~~~~~~~~~~~~~~~~~~~\text{in}~~~\Omega\\
         L\tilde{v}&=|\ov{u}|^{d-2}\ov{u}+\mu|\ov{u}|^{q-2}\ov{u}~~~~~\text{in}~~~\Omega.
     \end{aligned}
     \right.
 \end{equation*}
 Moreover, since $\ov{u}\geq0$ in $\Omega$, then from Proposition \ref{maximum-principle}, $\tilde{v}\geq0$ in $\Omega$ and thus, $\tilde{v}\in K$. Hence, by Proposition \ref{prop-1}, $(\tilde{u},\tilde{v})$ is a solution of \eqref{hamiltonian system}. It then remains to show that $(\tilde{u},\tilde{v})$ is positive. Notice first that from the non-negativity of $\tilde{v}$, it follows that $\tilde{u}\geq0$ in $\Omega$, thanks to Proposition \ref{maximum-principle}. Now, $\tilde{u}>0$ and $\tilde{v}>0$ in $\Omega$ follows from the strong maximum principle. 
\end{proof}

\section*{Data availability statement}
Data sharing not applicable to this article as no datasets were generated or analyzed during the current study.

\section*{Declaration of competing interest}

The authors declare that they have no known competing financial interests or personal relationships that could have
appeared to influence the work reported in this paper.

\section*{Authors contributions} 

All authors contributed equally. \\

\textbf{Acknowledgements:} D.A. and A.M. are  pleased to acknowledge the support of the Natural Sciences and Engineering Research Council of Canada. R.Y.T. is supported by Fields Institute. The authors would like to thank the referee for valuable comments and suggestions.

\bibliographystyle{ieeetr}

\begin{thebibliography}{10}

 

\bibitem{abatangelo2021elliptic} N. Abatangelo and M. Cozzi, \emph{An elliptic boundary value problem with fractional nonlinearity.} SIAM Journal on Mathematical Analysis 53.3 (2021): 3577-3601.

\bibitem{allegretto1998picone} W. Allegretto and H. Y. Xi, \emph{A Picone's identity for the $p$-Laplacian and applications.} Nonlinear Analysis: Theory, Methods \& Applications 32.7 (1998): 819-830.

\bibitem{ABC} A. Ambrosetti, H. Brezis and G. Cerami, \emph{Combined effects of concave and convex nonlinearities in some elliptic problems}, J. Funct. Anal. 122 (1994) 519-543.

\bibitem{anthal2022choquard} G. C. Anthal, J. Giacomoni, and K. Sreenadh, \emph{Choquard equation involving mixed local and nonlocal operators.} arXiv preprint arXiv:2212.07760 (2022).

\bibitem{barles2012lipschitz} G. Barles, E. Chasseigne, A. Ciomaga, and C. Imbert, \emph{Lipschitz regularity of solutions for mixed integro-differential equations.} Journal of differential equations 252.11 (2012): 6012-6060.

\bibitem{biagi2022brezis} S. Biagi, S. Dipierro, E. Valdinoci, and E. Vecchi, \emph{A Brezis-Nirenberg type result for mixed local and nonlocal operators.} arXiv preprint arXiv:2209.07502 (2022).

\bibitem{biagi2021faber} S. Biagi, S. Dipierro, E. Valdinoci, and E. Vecchi, \emph{ A Faber-Krahn inequality for mixed local and nonlocal operators.} Journal d'Analyse Math\'{e}matique (2023): 1-43.

\bibitem{biagi2022hong} S. Biagi, S. Dipierro, E. Valdinoci, and E. Vecchi, \emph{A Hong-Krahn-Szeg\"{o} inequality for mixed local and nonlocal operators.} Mathematics In Engineering 5.1 (2022).
		
\bibitem{biagi2022mixed} S. Biagi, S. Dipierro, E. Valdinoci, and E. Vecchi, \emph{Mixed local and nonlocal elliptic operators: regularity and maximum principles.} Communications in Partial Differential Equations 47.3 (2022): 585-629.

\bibitem{biagi2021semilinear} S. Biagi, S. Dipierro, E. Valdinoci, and E. Vecchi, \emph{Semilinear elliptic equations involving mixed local and nonlocal operators.} Proceedings of the Royal Society of Edinburgh Section A: Mathematics 151.5 (2021): 1611-1641.

\bibitem{biagi2021brezis} S. Biagi, D. Mugnai, and E. Vecchi, \emph{A Brezis-Oswald approach for mixed local and nonlocal operators.} Commun. Contemp. Math., Article 2250057 (2022): 1-28.

\bibitem{biswas2021mixed} A. Biswas, and M. Modasiya, \emph{Mixed local-nonlocal operators: maximum principles, eigenvalue problems and their applications.} arXiv preprint arXiv:2110.06746 (2021).

\bibitem{biswas2022boundary} A. Biswas, M. Modasiya, and A. Sen, \emph{Boundary regularity of mixed local-nonlocal operators and its application.} Annali di Matematica Pura ed Applicata (1923-) (2022): 1-32.

\bibitem{brasco2014convexity} L. Brasco and G. Franzina, \emph{Convexity properties of Dirichlet integrals and Picone-type inequalities.} Kodai Mathematical Journal 37.3 (2014): 769-799.

\bibitem{byun2023regularity} S.-S. Byun, H.-S. Lee, and K. Song, \emph{Regularity results for mixed local and nonlocal double phase functionals.} arXiv preprint arXiv:2301.06234 (2023).

\bibitem{byun2023mixed} S.-S. Byun and K. Song, \emph{Mixed local and nonlocal equations with measure data.} Calculus of Variations and Partial Differential Equations 62.1 (2023): 14.

\bibitem{de2022gradient} C. De Filippis and G. Mingione, \emph{Gradient regularity in mixed local and nonlocal problems.} Mathematische Annalen (2022): 1-68.

\bibitem{di2012hitchhikers} E. Di Nezza, G. Palatucci, and E. Valdinoci, \emph{Hitchiker's guide to the fractional Sobolev spaces.} Bulletin des sciences math\'{e}matiques 136.5 (2012): 521-573.

\bibitem{dipierro2022linear} S. Dipierro, E. Proietti Lippi, and E. Valdinoci, \emph{Linear theory for a mixed operator with Neumann conditions} Asymptotic Analysis 128.4 (2022): 571-594.

\bibitem{dipierro2021description} S. Dipierro and E. Valdinoci, \emph{Description of an ecological niche for a mixed local/nonlocal dispersal: an evolution equation and a new Neumann condition arising from the superposition of Brownian and L\'{e}vy processes.} Physica A: Statistical Mechanics and its Applications 575 (2021): 126052.

\bibitem{ekeland1976convex} I. Ekeland and R. Temam, \emph{Convex Analysis and Variational Problems}, American Elsevier Publishing Co., Inc., New York, 1976. 

\bibitem{fang2022regularity} Y. Fang, B. Shang, and C. Zhang, \emph{Regularity theory for mixed local and nonlocal parabolic p-Laplace equations.} The Journal of Geometric Analysis 32.1 (2022): 22.

\bibitem{garain2022regularity} P. Garain and J. Kinnunen, \emph{On the regularity theory for mixed local and nonlocal quasilinear elliptic equations.} Transactions of the American Mathematical Society 375.08 (2022): 5393-5423.

\bibitem{garain2023higher} P. Garain and E. Lindgren, \emph{Higher H\"{o}lder regularity for mixed local and nonlocal degenerate elliptic equations.} Calculus of Variations and Partial Differential Equations 62.2 (2023): 67.

\bibitem{garain2022mixed} P. Garain and A. Ukhlov, \emph{Mixed local and nonlocal Sobolev inequalities with extremal and associated quasilinear singular elliptic problems.} Nonlinear Analysis 223 (2022): 113022.

\bibitem{grisvard2011elliptic} P. Grisvard, \emph{Elliptic problems in nonsmooth domains.} Society for Industrial and Applied Mathematics, 2011.

\bibitem{kouhestani2019multiplicity} N. Kouhestani, H. Mahyar, and A. Moameni, \emph{Multiplicity results for a non-local problem with concave and convex nonlinearities.} Nonlinear Analysis 182 (2019): 263-279.

\bibitem{kouhestani2018multiplicity} N. Kouhestani and A. Moameni, \emph{Multiplicity results for elliptic problems with super-critical concave and convex nonlinearities.} Calculus of Variations and Partial Differential Equations 57 (2018): 1-12.

\bibitem{kuratowski1958topologie} K. Kuratowski, \emph{Topologie I}, PWN, Warsaw, 1958.

\bibitem{leonori2015basic} T. Leonori, I. Peral, A. Primo, and F. Soria, \emph{Basic estimates for solutions of a class of nonlocal elliptic and parabolic equations.} Discrete \& Continuous Dynamical Systems 35.12 (2015): 6031.

\bibitem{Li} X. Li, S. Huang, M. Wu,   C. Huang,  {\emph Existence of solutions to elliptic equation with mixed local and nonlocal operators.} AIMS Mathematics, 2022, Volume 7, Issue 7: 13313-13324.

\bibitem{maione2022variational} A. Maione, D. Mugnai, and E. Vecchi, \emph{Variational methods for nonpositive mixed local-nonlocal operators.} Fractional Calculus and Applied Analysis (2023): 1-19.

\bibitem{moameni2018variational} A. Moameni, \emph {Critical point theory on convex subsets with applications in differential equations and analysis.} J. Math. Pures Appl. (9) 141 (2020), 266-315.  

\bibitem{moameni2017variational} A. Moameni, \emph{A variational principle for problems with a hint of convexity.} Comptes Rendus Mathematique 355.12 (2017): 1236-1241.

\bibitem{ambrosetti1983some} P.H. Rabinowitz, 1983. \emph{ Some aspects of critical point theory}. in: MRC Tech, Rep., Madison, Wisconsin.

\bibitem{rabinowitz1974variational}  P.H. Rabinowitz, \emph{Variational methods of nonlinear eigenvalue problems}, in: Proc. Sym. on Eigenvalues of Nonlinear Problems, Edizionicremonese, Rome, 1974, pp. 139--195.

\bibitem{salort2022mixed} A. M. Salort and E. Vecchi, \emph{On the mixed local-nonlocal H\'{e}non equation.} Differential and Integral Equations 35.11/12 (2022): 795-818.

\bibitem{su2022regularity} X. Su, E. Valdinoci, Y. Wei, and J. Zhang, \emph{Regularity results for solutions of mixed local and nonlocal elliptic equations.} Mathematische Zeitschrift 302.3 (2022): 1855-1878.

\bibitem{szulkin1986minimax} A. Szulkin, \emph{Minimax principles for lower semicontinuous functions and applications to nonlinear boundary value problems.} Annales de l'Institut Henri Poincar\'{e} C, Analyse non lin\'{e}aire. Vol. 3. No. 2., 1986.
	
	
	
\end{thebibliography}

\end{document}